 \newtheorem{thm}{Theorem}[section]
 \newtheorem{cor}[thm]{Corollary}
  \newtheorem{cjt*}{Conjecture}
 \newtheorem{lem}[thm]{Lemma}
 \newtheorem{prop}[thm]{Proposition}
 \newtheorem{defn}[thm]{Definition}
 \newtheorem{rem}[thm]{Remark}
 \numberwithin{equation}{section}
\newtheorem{lem*}{Lemma}
\newtheorem{cor*}{Corollary}
\newenvironment{pfm2}{\medskip \noindent
{\it Proof of Corollary  \ref{cor}.}}{\hfill $\square$\par
}
\newenvironment{pfm3}{\medskip \noindent
{\it Proof of Theorem  \ref{addthm}.}}{\hfill $\square$\par
}
\newenvironment{pfm}{\medskip \noindent
{\it Proof of Theorem \ref{main}.}}{\hfill $\square$\par
}
\def\la{\lambda}
\def\th{\theta}
\def\R{\Bbb{R}}
\begin{document}

\title{Some configuration results for area-minimizing cones}
\author{Yongsheng Zhang}

\email{yongsheng.chang@gmail.com}
\date{2026/02/16}
\address{Academy for Multidisciplinary Studies, Capital Normal University, Beijing 100048, P. R. China}
\keywords{Regular area-minimizing cone, Lawlor criterion} 
% \subjclass{Primary~28A75, Secondary~53C38}
\begin{abstract}
We discover some very general configuration results for constructing area-minimizing cones.
In particular, given any closed minimal submanifold in some Euclidean sphere,
every cone over the minimal product of sufficiently many copies of the submanifold turns out to be area-minimizing;
meanwhile every cone over the minimal product of the submanifold and a round sphere of sufficiently large dimension is also area-minimizing.  
Here no additional geometric assumption (e.g. on isometry group or second fundamental form) is required.
Moreover, we establish that the category of regular area-minimizing cones in Euclidean spaces and that of closed minimal submanifolds in
Euclidean spheres share the same cardinality.
\end{abstract}
\maketitle
%\titlecontents{section}[0em]{}{\hspace{.5em}}{}{\titlerule*[1pc]{.}\contentspage}
%\titlecontents{subsection}[1.5em]{}{\hspace{.5em}}{}{\titlerule*[1pc]{.}\contentspage}
%{\setcounter{tocdepth}{2} \small \tableofcontents}
\section{Introduction}\label{S1} 
    Let $L$ be a set in the unit sphere $\mathbb S^N$ in Euclidean space $(\mathbb R^{{N+1}},g_E)$
    and
    $C(L)$ be the cone $\{tx: t\geq 0,\ x\in L\}$ over its  link $L$.
      It is called area-minimizing  in geometric measure theory, 
                     if the truncated cone $C\bigcap \bold{B}^{{N+1}}(1)$ has least mass
               among all integral %(normal) 
               currents (see \cite{FF}) with boundary $L$. 
               A fundamental result (Theorem 5.4.3 in \cite{F})
        asserts the existence of tangent cone at every point of an area-minimizing rectifiable current
        and moreover
each tangent cone
        itself area-minimizing.
        Although ambient Riemannian manifold can be quite arbitrary, 
        after blowing-up those cones always live in standard Euclidean spaces.
   Lots of efforts have been made in studying important properties and finding examples  of them in various situations, 
                e.g.,
                \cite{JS, BdGG, BL,  HL, HS, FK, Ch, Law0, Law, NS,  XYZ2, z, TZ} and etc.

               When $L$ is an embedded closed smooth submanifold, 
                     the cone $C(L)$ is smooth away from the origin
                     and we say it is a  regular cone.
In this paper, we search for regular area-minimizing cones and end up with very general configuration results.

Let us get started with a basic question regarding the structure of the category of regular area-minimzing cones.
\begin{quote}
$(\star)$
{\em Given two regular area-minimizing cones $C(L_1)$ and $C(L_2)$ in $\mathbb R^{{N_1+1}}$ and $\mathbb R^{{N_2+1}}$ respectively, 
is the cone over minimal product $L_1\times L_2$  still area-minimizing in $\mathbb R^{{N_1+N_2+2}}$?}
\end{quote}
Here the minimal product of $L_1$ and $L_2$ of dimension $k_1$ and $k_2$ respectively is 
                     \begin{equation}\label{mp}
          \left(\sqrt{\frac{k_1}{{k_1+k_2}}} L_1, \sqrt{\frac{k_2}{{k_1+k_2}}} L_2\right)\subset \mathbb S^{N_1+N_2+1}.
              \end{equation}
Apparently, the minimal product can apply to multiple inputs inductively. Along this line, Lawlor (Theorem 5.1.1, \cite{Law}) completely settled down $(\star)$ 
when each input $L_i$ is $\mathbb S^{N_i}$, with $i=1,2,\cdots, n$ and $N_i\in \mathbb Z_+$.
                     In particular, 
                                    whenever $\Sigma_{i=1}^n N_i>7$,
                                    the corresponding cone is area-minimizing.
                                    In recent work \cite{TZ},
                                     cones of dimension no less than 37 over minimal products of the category of typical minimal submanifolds associated to isoparametric foliations of spheres
                                     are confirmed area-minimizing.
            From these, 
                    one can observe that  the dimension plays an important role for $(\star)$ 
                            and 
                                      that some minimal submanifolds in spheres themselves cannot span area-minimizing cones 
                                      but their minimal products do.
                                      
                                In this paper, among others we get the following.
                                \begin{thm}\label{main}
                                Given embedded closed minimal submanifolds $\{L_i\}$ respectively in Euclidean spheres with $i=1,2,\cdots, n$,
                                every cone over the minimal product $L$ of sufficiently many copies among these $\{L_i\}$ is area-minimizing.
                                \end{thm}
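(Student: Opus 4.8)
The plan is to certify area-minimality through Lawlor's curvature criterion, taking advantage of the fact that repeatedly applying the minimal product pushes the link into higher and higher dimension. First I would check that $L$ is a legitimate link: iterating the construction \eqref{mp}, any choice of multiplicities $(m_1,\dots,m_n)$ produces a closed embedded minimal submanifold $L$ of a round sphere $\mathbb S^{M}$, of dimension $\dim L=\sum_i m_i\dim L_i$, lying inside the generalized Clifford torus cut out by the scaling factors of \eqref{mp}. Hence $C(L)$ is a regular minimal cone, and it suffices to exhibit, for a suitable large total multiplicity, a calibration showing $C(L)$ is area-minimizing. Lawlor's criterion (\cite{Law}, and as used in \cite{TZ}) is exactly such a sufficient condition: it reduces area-minimality to the solvability of the ``normal wedge'' construction over $C(L)$, whose input is the family of shape operators $A_\nu$ of $L\subset\mathbb S^{M}$ together with $\dim L$ and the codimension of $L$. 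The goal is therefore to analyse how this input evolves as the number of factors grows and to verify that the criterion is eventually met.

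The key step is to compute the second fundamental form of the minimal product and locate the structure that makes the criterion work. Because $L=\prod_i(r_iL_i)$ sits in the Riemannian product $\prod_i\mathbb S(r_i)$, its shape operators split into two pieces: a block-diagonal piece assembled from the shape operators of each rescaled factor $r_iL_i$ inside its own sphere, and a piece recording how the Clifford torus bends inside $\mathbb S^{M}$. The important and somewhat delicate feature is that this input does not simply stay bounded as copies accumulate: since the scaling factors in \eqref{mp} shrink as copies are added, the principal curvatures of each factor block actually grow. What saves the argument is \emph{localization} together with \emph{dimension growth}: for a normal direction $\nu$ concentrated in a single factor, $A_\nu$ is supported on one low-dimensional block and vanishes on the remaining $\dim L-\dim L_i$ tangent directions, while both $\dim L$ and the codimension of $L$ in $\mathbb S^{M}$ increase linearly in the number of copies, giving ever more room to close the normal wedge. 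Feeding this block-and-dilution structure into Lawlor's criterion should reduce the whole question to a single scalar inequality comparing a curvature functional of $L$ against a threshold determined by $\dim L$ and the codimension.

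I expect the main obstacle to be precisely this quantitative competition. Unlike the totally geodesic (sphere) case handled in \cite{Law}, where the only curvature present is that of the Clifford torus and pure dimension growth suffices, here the intrinsic curvature of the factors genuinely increases with the number of copies, so no crude curvature bound can be fed into the criterion. One must instead exploit the fact that the growth of the active curvature is confined to a bounded-dimensional block per normal direction, whereas the room quantities (the dimension and codimension of $L$, which govern how fast the wedge angle may accumulate) grow with every additional factor; the crux is to show that the latter growth strictly dominates, so that the scalar inequality holds once the total multiplicity is large. Two further points need care: the curvature data must be controlled uniformly over all of $L$, so that even the worst point still satisfies the criterion, and the local wedge construction must patch into a single globally closed calibrating form. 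Once the scalar inequality is established with a threshold growing strictly slower in the number of copies than $\dim L$ itself, the theorem follows by taking sufficiently many copies.
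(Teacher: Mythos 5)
Your proposal correctly identifies the tool (Lawlor's curvature criterion) and the right tension (the sup norm $\alpha$ of the second fundamental form grows like $\sqrt{n}$ with the number of copies, while the dimension grows like $n$), but it has a genuine gap, and in one place the geometric picture is backwards. Lawlor's criterion is not a comparison of a curvature functional against "a threshold determined by $\dim L$ and the codimension": it requires $\theta_0 \leq R(L)/2$, where $R(L)$ is the \emph{normal radius} of the link, a global geometric quantity that must be computed separately. You never estimate $R(L_1^{\times n})$, and your narrative that dimension growth gives "ever more room to close the normal wedge" is the opposite of what happens: by the paper's formula \eqref{l2}, $\cos R(L_1^{\times n}) = 1-\frac{1}{n}(1-\cos R_1)$, so the normal radius \emph{shrinks} like $\sqrt{2(1-\cos R_1)}/\sqrt{n}$ (equation \eqref{eqR}). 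The theorem holds not because room accumulates but because of a rate comparison: the wedge shrinks like $1/\sqrt{n}$ while the vanishing angle shrinks like $1/n$. Without the normal radius computation for minimal products (the content of the paper's \S 3.2) there is no right-hand side in the criterion at all, so the proof cannot close.

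The second missing ingredient is the mechanism that produces the $1/n$ decay of the vanishing angle, which you leave as "the crux is to show that the latter growth strictly dominates." The paper does this with Lawlor's ratio estimate (Lemma \ref{tc}): writing $\alpha = \frac{nk_1+1}{12}\cdot\frac{12}{nk_1+1}\sqrt{nk_1\max\{1,\alpha_1^2/k_1\}}$ and applying the lemma with $m=nk_1+1$, $r=12$ gives
\begin{equation*}
\tan\theta_c\big(nk_1+1,\alpha\big) < \frac{12}{nk_1+1}\,\tan\theta_c\!\left(12,\ \tfrac{12}{nk_1+1}\sqrt{nk_1\max\{1,\alpha_1^2/k_1\}}\right),
\end{equation*}
and since the second argument on the right tends to $0$ as $n\to\infty$, the right side is $O(1/n)$, beating $R/2 \sim c/\sqrt{n}$. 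Two smaller points: your "localization" claim for shape operators is false for the Clifford-torus-type normal $\eta_0$, whose shape operator is full rank across all blocks (what survives is only the norm identity $\alpha^2 = k\max\{1,\alpha_1^2/k_1\}$ of Lemma \ref{alpha}, because the three families of normals yield mutually orthogonal shape-operator matrices); and the codimension plays no role in the vanishing angle, so its growth buys nothing. Finally, the global patching of the calibration you worry about is already built into Lawlor's theorem and needs no separate argument.
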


                        %            \begin{rem}
                       %             The Remark \ref{uniform} later asserts that there exists a uniform $\tilde k$ 
                       %             such that whenever 
                         %           $\dim (L)\geq \tilde k$
                          %          the cone
                        %            $C(L)$ is then area-minimzing. 
                        %            \end{rem}
                                    
%The result tells in particular that 
\begin{rem}
Every closed minimal submanifold with no other assumptions at all can be used as a brick to construct area-minimizing cones in different combinatorial ways.
              The only payment is that the resulting area-minimizing cone may have high dimension.
              \end{rem}
              
              Another neat result is the following.
              
                        \begin{thm}\label{addthm}
                               Let $L$ be a closed smooth minimal subamanifold in some  $\mathbb S^N$.
                              Then the cone over minimal product $L\times \mathbb S^{d}$ is area-minimizing for sufficiently large $d$.
                               \end{thm}
                               \begin{rem}\label{rem14}
                              In fact there exists integer $d_0$  such that the cone over the minimal product $L^k\times \mathbb S^{d_1}\times \cdots \mathbb S^{d_\ell}$  is area-minimizing whenever $d=\sum_{i=1}^\ell d_i\geq d_0$.
                               If $L$ is a minimal  isoparametric hypersurface or a focal submanifold, then this falls into \cite{TZ} and $k+d\geq 36$ would be sufficient for the area-minimality.
                               \end{rem}

                  Prior to this paper, 
          all known regular area-minimizing cones
          are tightly related to either group actions
           or the theory of isoparametric foliations on spheres. 
              The above theorems in particular break these  restrictions.       
              Moreover, 
              a corollary of the theorems from the point of set theory
              is established as follow.
              \begin{cor}\label{cor}
              Regular area-minimizing cones in Euclidean spaces and closed minimal submanifolds in
Euclidean spheres share the same cardinality.
              \end{cor}

     %  {\ }
              
          The main tool is  Lawlor's curvature criterion for a regular cone to be area-minimizing.
      To be self-contained we briefly explain  his method in \S 2.
          Some useful information about minimal product will be given in \S 3.
          Finally, 
          the proofs of the theorems and corollary and more subtle results  partially answering $(\star)$ and complementing the theorems are collected in \S 4.

{\ }\\
{\bf Acknowledgments. }The author wishes to thank Professor Blaine Lawson for drawing his attention to the powerful work \cite{Law} of Professor Gary Lawlor.
This work was sponsored in part by NSFC (Grant Nos. 12022109, 11971352).
                        
%{\ }
          
          \section{Lawlor's curvature criterion}
          
          Area-minimizing hypercone is a key ingredient to solve the famous Bernstein problem \cite{B} about entire minimal graph of codimension one,
          see  \cite{fle, de2, A}.
           It was Jim Simons who found first non-trivial stable minimal hypercones $C(S^k\times S^k)$ with $k\geq 3$ in \cite{JS} and suspected that they are area-minimizing.
           In the following year, 
                              breakthrough was made in \cite{BdGG} which confirmed those Simons cones' area-minimality and based on this discovery counterexamples
                              of non-planar  minimal graphs to the Bernstein problem were found over domain $\R^N$ when $N\geq 8$.

                            In  \cite{HS}
                          a beautiful characterization of area-minimizing hypercones was established.
             In each side of an area-minimizing hypercone,
             there is a unique dilation-invariant foliation by minimal hypersurfaces.
{Therefore,} the unit norm vector fields lead to a foliation by their integral curves on $\mathbb R^{N+1}\sim 0$,
             which is also invariant under dilation.
             It naturally 
              induces 
              a dual co-degree one calibration form 
                   (see \cite{HL})
              and an area-nonincreasing projection (for surfaces of codimension one) to the hypercone in $\mathbb R^{N+1}\sim 0$.
                            
                        For a given regular minimal cone,    how to determine whether it is area-minimizing or not?
                        The most effective way is to employ the theory of calibration.
                        However, in general, it is not easy to directly find a desired calibration with mild singularity.
                      Lawlor successfully developed a sufficient criterion (sometimes also necessary) 
                                 looking for area-nonincreasing projection structure in suitable angular neighborhood of the minimal cone instead of in the whole space $\mathbb R^N$.
             If the boundary of the neighborhood is mapped to the origin under the projection,
             then one can send everything outside the neighborhood to the origin.
             In this way an area-nonincreasing projection can be globally defined.
             % However, its dual calibration has quite ``large" singularity.

              To understand  Lawlor's criterion, let us recall the following with $k<N$.
              
              \begin{defn}\label{Dnr}
              Let $L^k\subset \mathbb S^N$ be a  closed embedded submanifold, $x\in L$
              and
               $S_x$ denote the set of all unit normals of $L$ at $x$.
              Then the normal radius of $L$ 
              %(or $C(L)$) 
              at $x$ is 
              $$
             R_x\triangleq \sup\left\{ \, t\in \R_+\,:\, \exp_x(s v_x)\not\in L \, \text{ for all } v_x\in S_x \text{ and } s\in(0,t) \right\}
              $$
              where 
              $\exp_x(s  v_x)=(\cos s) \, x+(\sin s)\, v_x$ forms a normal geodesic along $v_x$.
              The quantity $R(L)\triangleq \inf \{\, R_x\, :\, x\in L\, \}$ is called normal radius of $L$. 
              %(or $C(L)$).
              %
              %
              \end{defn}
              
           For gaining area-minimizing projection, 
                   Lawlor considered
                   in each normal wedge 
                    $$W_x\left(\frac{R_x}{2}\right)=C\left(\exp_x\left(\left[0,\frac{R_x}{2}\right]\cdot  S_x\right)\right)$$
                   foliation  generated by rotations and  dilations of a suitable curve $\gamma_x$.
             For simplicity, all $\gamma_x$ are taken to be the same up to isometry.
             Namely, for very $x\in L$ and $v_x\in S_x$,
             curve $\gamma_x$ in span$\{x,\, v_x\}$
             has uniform expression $r=r(\theta)$ in the polar coordinate.
             Then assembling the projection along the curves provides an area-decreasing map to the $(k+1)$-dimensional $C(L)$
             if and only if
             \footnote{Our dimension of $C(L)$ is $k+1$ instead of $k$ in \cite{Law}.}
            \begin{equation}\label{ineq}
             \frac{dr}{d\theta}\leq r\sqrt{r^{2k+2}\cos^{2k}\theta 
                                                 \inf_{x\in L, \, v\in S_x}
                                                             \left(
                                                             \det
                                                             \left(
                                                                        \textbf{I}-(\tan\theta) \textbf{h}_{ij}^v
                                                                        \right)
                                                             \right)^2
                                                             -1
                                                             }
           \end{equation}
             with 
             $r(0)=\|x\|=1$.
 Here 
             $\textbf{h}_{ij}^v$ means the second fundamental form at $x$ with respect to $v$.
                                                To get narrowest curve $\gamma_x$, 
                                                one needs to require that equality holds everywhere in \eqref{ineq}
                                                and wants to have the smallest $\theta_0$
                                                such that 
                                                $\lim_{\theta\uparrow \theta_0}r(\theta)=\infty$.
                                                As a result, the boundary of
                                                the normal wedge
                                                $W_x\left(\theta_0\right)$ is projected to the origin
                                                and global area-decreasing projection gets set up.
                                                Such $\theta_0$, if existed, is called vanishing angle of $C(L)$.
                                                Note that
                                                there is no focal point of $L$
                                                in the neighborhood 
                                                $\bigcup_{x\in L} W_x\left(\theta_0\right)$
                                                since otherwise that focal point leads to a strictly negative expression under the square root symbol in \eqref{ineq}.
                                                
                                                {\ }
                                                
                                                Lawlor's curvature criterion is the following.
                                                \begin{thm}[Theorem 1.2.1, \cite{Law}]\label{LC}
                                                Given a regular minimal cone $C(L)\subset \R^{N+1}$,
                                                if the vanishing angle $\theta_0$ exists
                                                and $\theta_0\leq \frac{R(L)}{2}$,
                                                then $C(L)$ is area-minimizing.
                                                \end{thm}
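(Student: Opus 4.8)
\medskip\noindent\emph{Proposed approach.}
The plan is to realize the criterion through the projection (dual calibration) form of the argument: construct an area-nonincreasing Lipschitz retraction $P\colon \R^{N+1}\to C(L)$ and then push competitors forward. Once $P$ is in hand the conclusion is routine. For an integral $(k+1)$-current $T$ with $\partial T=L$ (which may be assumed supported in $\ol{\bold B}^{N+1}(1)$), the retraction property gives $\partial(P_\#T)=P_\#(\partial T)=P_\#L=L$, while $\mathrm{spt}(P_\#T)\subset C(L)$; by the constancy theorem on $C(L)\setminus\{0\}$ this forces $P_\#T=C(L)\cap\bold B^{N+1}(1)$. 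Since $P$ does not increase $(k+1)$-area, $\mathbf M\big(C(L)\cap\bold B^{N+1}(1)\big)=\mathbf M(P_\#T)\leq\mathbf M(T)$, which is exactly area-minimality.

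To build $P$ I would first foliate the neighborhood $\mc N=\bigcup_{x\in L}W_x(\theta_0)$. For each $x\in L$ and each unit normal $v\in S_x$ the prescribed profile $r=r(\theta)$, solving \eqref{ineq} with equality and $r(0)=1$, sweeps out, under the rotations of span$\{x,v\}$ and the dilations of $\R^{N+1}$, a family of leaves, the leaf $\mathcal L_\lambda$ meeting the ray $\R_+\cdot x$ exactly at $\lambda x$. Collapsing each leaf to that point defines $P$ on $\mc N$; it fixes $C(L)$ pointwise, and since a point at radius $\rho$ and angle $\theta$ lands at radius $\rho/r(\theta)\leq\rho$ (because $r(\theta)\geq r(0)=1$), one has $P\big(\ol{\bold B}^{N+1}(1)\cap\mc N\big)\subset C(L)\cap\ol{\bold B}^{N+1}(1)$. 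That this collapse is area-nonincreasing for $(k+1)$-area is precisely the content of \eqref{ineq}; this is the analytic core, and it is here that the second fundamental form $\mathbf h^v_{ij}$ and the absence of focal points inside $\mc N$ enter.

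Next I would verify that $P$ is globally single-valued and extends. The hypothesis $\theta_0\leq R(L)/2$ is exactly what forces the wedges $\{W_x(\theta_0)\}_{x\in L}$ to overlap only along their boundaries: if a point $\bar p\in\mathbb S^N$ admitted two representations $\bar p=\exp_{x_1}(s_1v_1)=\exp_{x_2}(s_2v_2)$ with $x_1\neq x_2$ and $s_1,s_2\leq\theta_0$, the two normal geodesics would join at $\bar p$ into a broken geodesic of length $\leq 2\theta_0\leq R(L)$ between points of $L$, and a standard tubular-neighborhood comparison (the factor $\tfrac12$ absorbing the two segments) rules this out through Definition \ref{Dnr}. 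Thus the normal exponential map is injective on the relevant region and $P$ is single-valued on the interior of $\mc N$. Because $\theta_0$ is the vanishing angle, $\lim_{\theta\uparrow\theta_0}r(\theta)=\infty$, so the outer boundary of each $W_x(\theta_0)$ is carried to the origin; I may then set $P\equiv 0$ on $\R^{N+1}\setminus\mc N$ without breaking continuity or the area bound, completing the global retraction.

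The main obstacle, as I see it, is this global assembly rather than the pointwise estimate: one must show rigorously that the normal-radius bound keeps the leaves from distinct base points disjoint throughout $\mc N$, and that the glued map is Lipschitz enough to act on integral currents with the stated mass bound near the conical singularity. Granting these, the comparison in the first paragraph closes the argument. Equivalently, one may dualize $P$ to a closed, comass-one $(k+1)$-form calibrating $C(L)$ on $\R^{N+1}\setminus\{0\}$ and invoke the fundamental theorem of calibrations, the point singularity at the origin being negligible for the mass comparison.
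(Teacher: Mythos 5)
This statement is not proved in the paper at all: it is quoted verbatim from Lawlor's memoir (Theorem 1.2.1 of \cite{Law}), with only the expository sketch in \S 2 and the calibration inequality \eqref{Lawlorineq} in the appendix indicating the underlying mechanism. Your proposal reconstructs exactly that mechanism — the wedge-by-wedge area-nonincreasing projection built from the profile curve $r(\theta)$ solving \eqref{ineq} with equality, collapse of the wedge boundaries and of everything outside $\bigcup_{x\in L}W_x(\theta_0)$ to the origin, then pushforward plus the constancy theorem (equivalently, dualizing to a singular calibration) — so it is essentially the same approach the paper describes and defers to \cite{Law}; the only loose point is your disjointness step, where the ``broken geodesic'' must first be smoothed, by the standard first-variation argument at the infimal overlap radius, into a genuine normal geodesic returning to $L$ before Definition \ref{Dnr} can be invoked.
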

                                              
                                                Where does curvature play a role? 
                                                How to check whether a given cone satisfies 
                                                 $\theta_0\leq \frac{R(L)}{2}$?
                                                In practice, 
                                                Lawlor
                                           employed
                                            \begin{equation}\label{control}                                                                                                                                                                              (1-\alpha t)e^{\alpha t}
                                                                       <
                                                                       F(\alpha, t, k+1)
                                                                       \leq
                                                                        \det
\left(
                                                                        \textbf{I}-t \, \textbf{h}_{ij}^v
                                                                        \right)
                                              \end{equation}
                                                      where 
                                                      $t=\tan \theta$, 
                                                                                               $
                                                      \alpha=
                                                      %\left\{
                                                      \sup_{x\in L, v\in S_x}
                                                      \|     \textbf{h}_{ij}^v      \|
                                                     %\right \}.
                                                      $
is the curvature term
                                                       and
                                     \begin{equation}\label{F}   
                                                      F(\alpha, t, k+1)=
                                                                 \left(
                                                                 1-\alpha t\sqrt{\frac{k}{k+1}}
                                                                  \right)
                                                                  \left(
                                                                  1+\frac{\alpha t}{\sqrt{k(k+1)}}
                                                                  \right)^{k},
                                        \end{equation}
                                        to replace the infimum term  in \eqref{ineq}
                                                deriving two kinds of vanishing angles 
                                                      $$
                                                      \theta_c( k+1, \alpha)
                                                      >
                                                      \theta_F( k+1, \alpha)
                                                      \geq
                                                       \theta_0( C(L))
                                                       .
                                                      $$
                                                      So, if either 
                                                      $\theta_F$
                                                      or 
                                                      $\theta_c$ exists of finite value,
                                                      then consequently   vanishing angle  $\theta_0$ must exist.
                                                                                                            For details readers are referred to \cite{Law}.
                                                    
                                                    {\ }
                                                      
                                                     An important   property  about $\th_c$ is the following,
                                                     which provides an effective control on $\th_c$.
                                                      \begin{lem}[Proposition 1.4.2, \cite{Law}]\label{tc}
                                                      For $m,r\in \mathbb Z_+$  and $m>r$, it follows that
                                 \begin{equation}
                                 \label{ratiotc}
                                                   \tan \left(
                                                                      \theta_c
                                                                      \Big(m, \frac{m}{r}\alpha     \Big)
                                                                      \right)
                                                      <
                                                      \frac{r}{m}
                                                      \tan 
                                                              \Big(
                                                              \theta_c(r, \alpha)
                                                              \Big) \, .
                           \end{equation}
                                                      \end{lem}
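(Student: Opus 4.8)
The plan is to recast the vanishing angle $\theta_c$ as a clean first-passage problem, to synchronize the two sides of \eqref{ratiotc} by a single rescaling that renders the transcendental factor common, and then to conclude from an amplitude comparison together with the convexity of $\tan$.

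First I would put the defining ODE into first-passage form. For a cone of dimension $m=k+1$ the profile $r(\theta)$ cutting out $\theta_c(m,\alpha)$ solves \eqref{ineq} with equality and with the infimum of determinants replaced by the lower bound of \eqref{control}, namely $\frac{dr}{d\theta}=r\sqrt{r^{2m}\cos^{2m-2}\theta\,[(1-\alpha\tan\theta)e^{\alpha\tan\theta}]^{2}-1}$ with $r(0)=1$, and $\theta_c(m,\alpha)$ is the angle at which $r\uparrow\infty$. Substituting $q=r^{-m}$ collapses this to $\frac{dq}{d\theta}=-m\sqrt{H(\theta)^{2}-q^{2}}$, $q(0)=1$, where $H(\theta)=\cos^{m-1}\theta\,(1-\alpha\tan\theta)e^{\alpha\tan\theta}$, and now $\theta_c(m,\alpha)$ is exactly the first zero of $q$. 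Passing to the phase variable $\Phi=m\theta$ normalizes the speed to $\frac{dq}{d\Phi}=-\sqrt{A(\Phi)^{2}-q^{2}}$, with amplitude $A_m(\Phi)=\cos^{m-1}(\Phi/m)\big(1-\tfrac{m}{r}\alpha\tan(\Phi/m)\big)e^{\frac{m}{r}\alpha\tan(\Phi/m)}$ for the left-hand problem (parameter $\tfrac{m}{r}\alpha$, dimension $m$) and the analogous $A_r(\Phi)$ with parameter $\alpha$ and $m$ replaced by $r$ for the right-hand problem; the first zeros sit at $\Phi^{*}_m=m\,\theta_c(m,\tfrac{m}{r}\alpha)$ and $\Phi^{*}_r=r\,\theta_c(r,\alpha)$.

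The heart of the matter is the pointwise domination $A_m(\Phi)\ge A_r(\Phi)$ on the relevant range, which splits into two monotonicities. For the transcendental factor, writing $\varphi(s)=(1-s)e^{s}$ one has $\varphi'(s)=-s\,e^{s}<0$, so it suffices that $s_m:=\tfrac{m}{r}\alpha\tan(\Phi/m)\le \alpha\tan(\Phi/r)=:s_r$; this is just $m\tan(\Phi/m)\le r\tan(\Phi/r)$ and follows from $\tan t/t$ being increasing, and since $q$ is forced to $0$ before $s$ reaches $1$ (there $\varphi$, hence $H$, vanishes) both $\varphi$-values stay nonnegative on the interval that matters. For the power-of-cosine factor one checks that $x\mapsto (x-1)\ln\cos(\Phi/x)$ is nondecreasing for $x\ge 2$: its derivative equals $\ln\cos t+\tfrac{x-1}{x}\,t\tan t$ with $t=\Phi/x$, which is bounded below by $\ln\cos t+\tfrac12 t\tan t$, and the latter vanishes at $t=0$ with nonnegative derivative $\tfrac12\sec^{2}t\,(t-\tfrac12\sin 2t)\ge 0$. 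Granting $A_m\ge A_r\ge 0$, the elementary comparison principle for $\frac{dq}{d\Phi}=-\sqrt{A^{2}-q^{2}}$ (larger amplitude drives $q$ down faster) gives $q_m\le q_r$ up to the first zero, hence $\Phi^{*}_m\le\Phi^{*}_r$, i.e. $\theta_c(m,\tfrac{m}{r}\alpha)\le\tfrac{r}{m}\,\theta_c(r,\alpha)$. Applying $\tan$ and using its strict convexity on $[0,\tfrac{\pi}{2})$ with $\tan 0=0$ yields $\tan\theta_c(m,\tfrac{m}{r}\alpha)\le\tan\!\big(\tfrac{r}{m}\theta_c(r,\alpha)\big)<\tfrac{r}{m}\tan\theta_c(r,\alpha)$, which is \eqref{ratiotc}; note the convexity step supplies the strictness for free.

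I expect the main obstacle to be the amplitude domination, specifically the power-of-cosine monotonicity, which is precisely where $m>r$ and the coupling $\tfrac{m}{r}\alpha$ are used and which requires $r\ge 2$. The degenerate endpoint $r=1$ (where the cosine factor is absent and $A_m\ge A_r$ can fail near $\Phi=0$) lies outside this comparison, and I would treat it separately, either by a direct estimate exploiting the slack in the $\tan$-convexity step or by a limiting argument. A second, milder point to address is the well-posedness of the first-passage problem at the initial cusp $q(0)=A(0)=1$, where $\frac{dq}{d\Phi}=0$; this is handled by the smoothness of the profile curve and a standard perturbation of the comparison on $(0,\Phi^{*}]$.
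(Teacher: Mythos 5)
Your approach is genuinely different from the one the paper relies on. The paper itself does not reprove this lemma of Lawlor; but its appendix proof of the companion Lemma \ref{tF} reproduces Lawlor's method for $\theta_c$, and that method is a rescaling of the variable $t=\tan\theta$ in the calibration differential inequality: given $g$ with $(g-tg'/r)^2+(g'/r)^2\le[(1-\alpha t)e^{\alpha t}]^2$, $g(0)=1$, $g$ reaching $0$, one sets $h(s)=g(\tfrac{m}{r}s)$; since $(1-\alpha t)e^{\alpha t}$ depends only on the product $\alpha t$, the right-hand side becomes exactly $[(1-\beta s)e^{\beta s}]^2$ with $\beta=\tfrac{m}{r}\alpha$, and the change of dimension is absorbed by $(h-s\dot h/m)^2\le(h-s\dot h/r)^2$, which needs only $\dot h\le 0\le h$. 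This gives \eqref{ratiotc} for every $r\ge 1$, with no case analysis and no ODE comparison. You instead rescale the angle ($\Phi=m\theta$ versus $\Phi=r\theta$), prove pointwise domination of the amplitudes, invoke a comparison principle, and convert the resulting angle inequality $m\,\theta_c(m,\tfrac{m}{r}\alpha)\le r\,\theta_c(r,\alpha)$ into \eqref{ratiotc} by strict convexity of $\tan$. Your computations check out: the reduction $q=r^{-m}$, the monotonicity of $\tan t/t$, and the monotonicity of $x\mapsto(x-1)\ln\cos(\Phi/x)$ for $x\ge 2$ are all correct, and your route even yields a stronger conclusion (comparison of the angles themselves, not just of their tangents), with strictness coming free from convexity.

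There are, however, two gaps. The serious one is $r=1$, which lies inside the lemma's stated hypotheses ($m,r\in\mathbb Z_+$, $m>r$): there the cosine factors compare the wrong way and, as you yourself observe, $A_m<A_1$ near $\Phi=0$, so the amplitude domination fails — and with it, quite possibly, the stronger angle inequality your strategy is built on; the fallbacks you sketch (``slack in the convexity step'', ``a limiting argument'') are not proofs, whereas Lawlor's $t$-rescaling handles $r=1$ without modification. The second, more technical gap is the comparison step: the ODE $dq/d\Phi=-\sqrt{A^2-q^2}$ is non-Lipschitz and degenerate at $\Phi=0$, where $q(0)=A(0)=1$ and $q'(0)=0$, so solutions are not unique (for $A\equiv 1$ both $q\equiv 1$ and $q=\cos\Phi$ solve it) and ``larger amplitude drives $q$ down faster'' is not yet a theorem. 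The clean repair is exactly the device the paper uses: do not compare extremal solutions at all, but note that $q_r$ itself is admissible for the $m$-problem, since $A_m\ge A_r$ gives $q_r'=-\sqrt{A_r^2-q_r^2}\ge-\sqrt{A_m^2-q_r^2}$ together with $q_r\le A_r\le A_m$; the infimum characterization of the vanishing angle then yields $\Phi_m^*\le\Phi_r^*$ with no uniqueness issues. With that reformulation, your argument is sound for all $m>r\ge 2$ (which covers every application in the paper, where $r=12$), but as a proof of the lemma as stated it is incomplete.
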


                                                      In fact a ratio type control on $\th_F$
                                                      can also be established. 
                               \begin{lem}\label{tF}
                                                      For $m,r\in \mathbb Z_+$  and $m>r$, it follows that
               \begin{equation}
                                 \label{ratiotF}
                                                      \tan \left(
                                                                      \th_F
                                                                      \Big(m, \frac{m}{r}\alpha     \Big)
                                                                      \right)
                                                      <
                                                       \frac{r}{m}
                                                      \tan 
                                                              \left(
                                                              \th_F
                                                                \left(
                                                                r, \alpha\sqrt{\frac{(m-1)r}{(r-1)m}}
                                                                \right)
                                                              \right) \, .
                         \end{equation}
                                                      \end{lem}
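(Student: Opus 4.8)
The plan is to recast the vanishing angle as the extinction time of a scalar ODE and then run an ODE comparison after the rescaling $\tan\theta=\tfrac rm\tan\phi$, mirroring the proof of \eqref{ratiotc}. Writing the cone dimension as $n$ and letting $\rho$ denote the radial variable $r$ of \eqref{ineq}, I would first substitute $\sigma=\sec^{\,n-1}\theta\,\rho^{-n}$ into the equality case of \eqref{ineq}; the factor $\cos^{\,n-1}\theta$ multiplying the square root is thereby absorbed and the profile equation collapses to the clean first–order ODE
\[
\frac{d\sigma}{d\theta}=(n-1)\tan\theta\,\sigma-n\sqrt{F(\alpha,\tan\theta,n)^{2}-\sigma^{2}},\qquad \sigma(0)=1,
\]
with $\theta_F(n,\alpha)$ equal to the first $\theta$ at which $\sigma$ vanishes. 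The whole point of this substitution is that it isolates $F$ under the radical, which is what makes the two sides of \eqref{ratiotF} comparable.

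Next I would apply $\tan\theta=\tfrac rm\tan\phi$ to the $m$–dimensional problem, obtaining $\hat\sigma(\phi)$, and compare it with the solution $\tilde\sigma(\phi)$ of the $r$–dimensional problem; both start at $1$, and \eqref{ratiotF} is exactly the assertion that $\hat\sigma$ reaches $0$ before $\tilde\sigma$. Here the factor $\sqrt{\tfrac{(m-1)r}{(r-1)m}}$ is designed precisely so that the linear factors of $F(\tfrac mr\alpha,\tan\theta,m)$ and of $F(\alpha',\tan\phi,r)$ both reduce to the common $1-P$ with $P=\alpha\tan\phi\sqrt{\tfrac{m-1}{m}}$; consequently, by \eqref{F},
\[
\frac{F(\tfrac mr\alpha,\tan\theta,m)}{F(\alpha',\tan\phi,r)}=\frac{(1+\tfrac{P}{m-1})^{m-1}}{(1+\tfrac{P}{r-1})^{r-1}}\ge 1 ,
\]
the inequality being the monotonicity of $\nu\mapsto(1+P/\nu)^{\nu}$ together with $m-1>r-1$. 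Abbreviate this as $F_m\ge F_r$.

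The comparison then reduces to the pointwise differential inequality $\tfrac{d\tilde\sigma}{d\phi}\ge\tfrac{d\hat\sigma}{d\phi}$ at any coincidence point $\hat\sigma=\tilde\sigma=\sigma$. Collecting terms, the difference splits into a drift part whose sign is that of $B=(m-r)\bigl(rm-m-r-r^{2}\tan^{2}\phi\bigr)$, plus a square–root part $r\bigl(A\sqrt{F_m^{2}-\sigma^{2}}-\sqrt{F_r^{2}-\sigma^{2}}\bigr)$, where $A=\tfrac{m^{2}(1+\tau^{2})}{m^{2}+r^{2}\tau^{2}}\ge1$ (with $\tau=\tan\phi$) is the Jacobian of the rescaling. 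Since $F_m\ge F_r$ and $A\ge1$, the square–root part is nonnegative, so when $B\ge0$ (small angles) there is nothing to prove. The main obstacle is the complementary large–angle regime $B<0$, where the drift works against us and the comparison is no longer pointwise–uniform.

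To overcome this I would borrow information from the reference trajectory, namely the a priori bound $\sqrt{F_r^{2}-\sigma^{2}}\ge\tfrac{r-1}{r}\tau\sigma$, which is simply the statement $\tfrac{d\tilde\sigma}{d\phi}\le0$. This monotonicity is itself read off the ODE: because $F$ is strictly decreasing in $\theta$ on the admissible range, a short computation shows that at any interior critical point one has $\sigma''>0$, so every critical point is a strict local minimum; hence $\sigma$ cannot turn upward before reaching its first zero and is monotone decreasing. Feeding this bound into the $B<0$ estimate, and using $A-1=\tfrac{(m^{2}-r^{2})\tau^{2}}{m^{2}+r^{2}\tau^{2}}$ with $F_m\ge F_r$, collapses the required inequality to $(rm-m-r)(1+\tau^{2})\ge0$, which holds because $rm-m-r=(m-1)(r-1)-1>0$ for $m>r\ge 2$ (the case $r=1$ being vacuous, as $\alpha'$ is then undefined). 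Thus $\tfrac{d\tilde\sigma}{d\phi}>\tfrac{d\hat\sigma}{d\phi}$ strictly whenever the two profiles agree and $\tau>0$, so $\hat\sigma<\tilde\sigma$ for $\phi>0$ and $\hat\sigma$ vanishes strictly first; unwinding the substitution yields \eqref{ratiotF}. The genuinely delicate step, and the one I expect to be hardest to write cleanly, is exactly this adverse large–angle regime, where the pointwise comparison fails and must be rescued by the monotonicity of the lower–dimensional profile.
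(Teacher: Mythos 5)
Your two structural ingredients are exactly the paper's: the rescaling $\tan\theta=\frac{r}{m}\tan\phi$, and the observation that the factor $\sqrt{(m-1)r/((r-1)m)}$ is rigged so that the linear factors of the two $F$'s in \eqref{F} coincide, reducing the comparison $F_m\geq F_r$ to the monotonicity of $\nu\mapsto(1+P/\nu)^{\nu}$. Your algebra is also correct: I checked that the drift coefficient is $(m-r)(rm-m-r-r^{2}\tau^{2})$ over $m^{2}+r^{2}\tau^{2}$, that $A-1=(m^{2}-r^{2})\tau^{2}/(m^{2}+r^{2}\tau^{2})$, and that inserting $\sqrt{F_{r}^{2}-\sigma^{2}}\geq\frac{r-1}{r}\tau\sigma$ does collapse the adverse regime to $(m-r)(rm-m-r)(1+\tau^{2})\geq 0$. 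But your implementation is genuinely different from the paper's and harder than necessary: the paper never touches the equality ODE. It takes a profile $g$ satisfying the calibration inequality $(g-tg'/r)^{2}+(g'/r)^{2}\leq F(\alpha,t,r)^{2}$ with $g(0)=1$ and $g$ reaching $0$, sets $h(s)=g(\frac{m}{r}s)$, and notes that the chain rule gives $g'/r=\dot h/m$ exactly, while $0\leq h-s\dot h/m\leq h-s\dot h/r$ because $\dot h\leq 0\leq h$; hence $h$ is admissible for the $m$-dimensional inequality with the rescaled curvature, and the lemma follows from the fact that any admissible profile bounds $\theta_F$ from above. That one sign observation replaces your entire case analysis on $B$ and the monotonicity lemma that rescues it.

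The genuine gap in your version is not the large-angle regime (which you handle) but the start $\phi=0$, which your comparison cannot get past. There $\hat\sigma=\tilde\sigma=1$, both derivatives vanish, so the ordering of derivatives at this coincidence point is not strict, and the right-hand side of the ODE is non-Lipschitz because $\sigma=F$; in fact the initial-value problem is non-unique. For instance, with $\alpha=0$ both $\sigma=\cos\theta$ (the straight line, extinction at $\pi/2$) and a faster branch $\sigma=1-\frac{(n-1)^{2}}{2}\theta^{2}+O(\theta^{4})$ solve the equality ODE, and $\theta_F$ is the extinction angle of the fastest branch, not of ``the'' solution — so the objects you compare are not even pinned down by your characterization. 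Consequently, your strict derivative ordering at coincidence points with $\tau>0$ rules out a first crossing at positive $\phi$, but cannot exclude $\hat\sigma>\tilde\sigma$ on an interval $(0,\epsilon)$ touching $\tilde\sigma$ only at $\phi=0$, which is precisely what non-uniqueness permits. To close the argument you would have to compare second-order coefficients of the two extremal branches at $\phi=0$: writing $\sigma=1-\frac{b}{2}\phi^{2}+\dots$, one needs $\frac{r^{2}}{m^{2}}b_{m}^{+}\geq b_{r}^{+}$ for the larger roots of the associated quadratics (at $\alpha=0$ this reads $\frac{r^{2}(m-1)^{2}}{m^{2}}\geq(r-1)^{2}$, true since $m>r$, but the general case is an additional computation you have not done) — or else abandon pointwise ODE comparison in favor of the admissible-class formulation, i.e.\ the paper's route, where the issue never arises.
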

                                                      Although $F$ and $\th_F$ are more accurate and efficient for applying the criterion,
                                                      however in certain sense Lemma \ref{tF} is not as good as Lemma \ref{tc}
                                                      since the extra factor $\sqrt{\frac{(m-1)r}{(r-1)m}}$ is larger than one.
There will be no much difference in the scope of current paper when $m$ is sufficiently large and we save  its proof in appendix.

                                                      \begin{defn}
                                                      We call a regular minimal cone Type-c (or Type-F) Lawlor cone if it can be verified area-minimizing using the vanishing angle $\theta_c$ (or $\th_F$),
                                                      i.e., $\theta_c\leq \frac{R(L)}{2}$ (or $\th_F\leq \frac{R(L)}{2}$).
                                                      \end{defn}

                                                      We will show that the situations in Theorems \ref{main} and \ref{addthm} bring us  Type-c cones.
                                                      Besides, some considerations regarding $(\star)$ will be made in \S 4 (and results work for Type-F as well).

                 {\ }
                                                      
             \section{Minimal product}
                        The (constant) minimal product
                        \footnote{We say constant minimal product to distinguish this from spiral minimal product in \cite{L-Z}.}
                         in \S 1 can naturally induce a nice algorithm to generate minimal submanifold  for multiple minimal submanifolds 
                         \begin{equation}\label{prod}
         L_1\times\cdots
         \times L_n
         \triangleq
          \left(\lambda_1 L_1, \cdots, \lambda_n  L_n\right)\subset \mathbb S^{N_1+\cdots +N_n+n-1}.
       \end{equation}
         where $\lambda_i=\sqrt{\frac{k_i}{k}}$ with $k_i$ the dimension of $L_i$ and $k=\sum_{i=1}^n k_i>0$.
         In this paper we shall focus on the case that every $k_i>0$.
         
         %This structure should be a very classical result.
         There have been many specific situations where minimal product has been employed.
         For all $L_i$ being full spheres,
         the structure produces  Clifford type minimal surfaces.
          When  these $L_i$ are   R-spaces, \cite{OS} studied whether cones over  minimal products of them are area-minimizing.
          For general minimal submanifolds as inputs, see \cite{X} for a proof through the Takahashi Theorem.
         Around 2016 it was rediscovered independently by \cite{TZ} and \cite{CH} using different methods.
         Reader may choose either familiar way to understand the structure.
       
       \subsection{Formula of $\alpha^2$ for minimal product.\, }  \label{3.1}
         For simplicity, we mainly focus on  $n=2$ in the paper and all related results for $n\geq 3$ can be gained by induction.
          In \cite{TZ},  information about the second fundamental form of  minimal product has been clearly figured out.
           Let $k_1, k_2$ be the dimension of $L_1, L_2$ respectively.
            Take $x_1\in L_1$ and $x_2\in L_2$.
           Together with
         $
         \eta_0=(\la_2 x_1, -\lambda_1 x_2)
        $,
         orthonormal bases
         $\{\sigma_1,\cdots, \sigma_{N_1-k_1}\}$ and $\{\tau_1,\cdots,\tau_{N_2-k_2}\}$
         of $T_{x_1}^\perp L_1$ and $T_{x_2}^\perp L_2$ % at $x$ and $y$
         lead to
          an orthonormal basis
         $$\{(\sigma_1,0),\cdots, (\sigma_{N_1-k_1},0),\; (0,\tau_1),\cdots, (0,\tau_{N_2-k_2}),\; \eta_0\}$$
         of the normal space of $L_1\times L_2$ at point $(\lambda_1 x_1,\la_2 x_2)$ in $\mathbb S^{N_1+N_2+1}$.

         Let $A$ stand for the symbol of shape operators.
         Then with respect to natural tangential orthonormal bases,
         we
         have the following relations:
%\[\begin{bmatrix}
  %  u_1 & u_2\\ u_3 & u_4
  %  \end{bmatrix}, \quad \begin{bmatrix}
  %  u_1 & u_2\\ u_3 & u_4
  %  \end{bmatrix}\]

         $ {\ \ \ \ \ \ \ \ \ \  \ \ } A_{(\sigma_i,0)} =
\begin{pmatrix}
\frac{1}{\lambda_1}A_{\sigma_i} & O\\
O &O
\end{pmatrix},
$
         \,
         $ A_{(0,\tau_j)} =
\begin{pmatrix}
O & O\\
O & \frac{1}{\la_2}A_{\tau_j}
\end{pmatrix},
$
\,
$
A_{\eta_0}=
                 \begin{pmatrix}
                 -\frac{\la_2}{\la_1}I_{k_1} & O\\
                 O & \frac{\la_1}{\la_2}I_{k_2}
                 \end{pmatrix}
                 .
                 $
Here $i=1,\cdots, N_1-k_1$ and $j=1,\cdots, N_2-k_2$.
If $N_1-k_1=0$ or $N_2-k_2=0$ or both vanish, then corresponding type would disappear.
For instance, for a Clifford type torus of codimension one, its second fundamental form is just the last matrix given by $\eta_0$.
        
            Let $\alpha_1^2,\, \alpha_2^2, \, \alpha^2$ be the supremum norm squares of corresponding second fundamental forms with respect to unit normals for $L_1,\ L_2$ and $L_1\times L_2$ respectively.
          Then  the following is obvius.
                        \begin{lem}\label{alpha}
                        As marked in the above, it follows that, for $k_1, k_2>0$,
                        $$\alpha^2=(k_1+k_2)\max\left\{ \frac{\alpha_1^2}{k_1},\, \frac{\alpha_2^2}{k_2},\, 1\right\}.$$
                        \end{lem}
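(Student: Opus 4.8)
The plan is to compute the Hilbert--Schmidt norm $\|A_v\|^2=\tr\left(A_v^2\right)$ directly for an arbitrary unit normal $v$ of $L_1\times L_2$ and then to maximize over $v$ and over the base point. First I would expand a general unit normal at $(\la_1 x_1,\la_2 x_2)$ in the orthonormal basis recorded above as
$$
v=\sum_i a_i (\si_i,0)+\sum_j b_j (0,\tau_j)+c\,\e_0,\qquad \sum_i a_i^2+\sum_j b_j^2+c^2=1.
$$
By linearity of the shape operator together with the three block formulas for $A_{(\si_i,0)}$, $A_{(0,\tau_j)}$ and $A_{\e_0}$, the operator $A_v$ is block diagonal,
$$
A_v=\begin{pmatrix}\f{1}{\la_1}P-c\f{\la_2}{\la_1}I_{k_1}&O\\ O&\f{1}{\la_2}Q+c\f{\la_1}{\la_2}I_{k_2}\end{pmatrix},\qquad P:=\sum_i a_i A_{\si_i},\quad Q:=\sum_j b_j A_{\tau_j},
$$
where $P$ (resp. $Q$) is the shape operator of $L_1$ (resp. $L_2$) in the normal direction $\si=\sum_i a_i\si_i$ (resp. $\tau=\sum_j b_j\tau_j$).

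Next I would take traces blockwise. Expanding each squared diagonal block produces cross terms proportional to $\tr(P)$ and $\tr(Q)$; because every $L_i$ is minimal, each $A_{\si_i}$ and $A_{\tau_j}$ is trace-free, so $\tr(P)=\tr(Q)=0$ and these cross terms drop out. Using $\la_1^2=k_1/k$ and $\la_2^2=k_2/k$ with $k=k_1+k_2$, the two pure $c^2$ contributions collapse cleanly, $c^2 k_1\f{\la_2^2}{\la_1^2}+c^2 k_2\f{\la_1^2}{\la_2^2}=c^2(k_2+k_1)=c^2 k$, leaving
$$
\|A_v\|^2=\f{k}{k_1}\tr\left(P^2\right)+\f{k}{k_2}\tr\left(Q^2\right)+c^2 k.
$$

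Finally I would optimize. Since $\tr\left(A_\si^2\right)$ is quadratic in $\si$, writing $a^2=\sum_i a_i^2$ gives $\tr\left(P^2\right)=a^2\,\tr\left(A_{\si/a}^2\right)\le a^2\a_1^2$, and likewise $\tr\left(Q^2\right)\le b^2\a_2^2$ with $b^2=\sum_j b_j^2$. Hence $\|A_v\|^2\le k\left(\f{\a_1^2}{k_1}a^2+\f{\a_2^2}{k_2}b^2+c^2\right)$, a linear functional of $(a^2,b^2,c^2)$ on the simplex $a^2+b^2+c^2=1$, whose maximum is attained at a vertex and equals $\max\left\{\a_1^2/k_1,\ \a_2^2/k_2,\ 1\right\}$; multiplying by $k$ gives the stated value. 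The $\e_0$-direction ($c=1$) always supplies the value $1$, and if $N_i-k_i=0$ for some factor the corresponding term simply disappears, consistent with $\a_i=0$. The only point needing a little care — the ``main obstacle,'' such as it is — is verifying that each vertex value is genuinely attained, i.e. that $x_1,x_2$ and the unit normal directions can be chosen independently so as to saturate the relevant block simultaneously; this is immediate, since $\a^2$ is itself defined as a supremum over all base points and all unit normals of $L_1\times L_2$.
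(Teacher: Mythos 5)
Your proof is correct and is essentially the paper's own argument written out in full: the vanishing of your cross terms via $\tr(P)=\tr(Q)=0$ (minimality) together with the block structure is precisely the ``mutual perpendicularity'' of the three subclasses of shape operators $\{A_{(\sigma_i,0)}\}$, $\{A_{(0,\tau_j)}\}$, $\{A_{\eta_0}\}$ that the paper's one-line proof invokes, and the rest (the collapse of the $c^2$ terms and the maximization of a linear functional over the simplex) is the same computation made explicit.
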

                        \begin{proof}
                        Note that the matrices of shape operators with respect to three subclasses of unit normals $\{(\sigma_i,0)\}$,
                                  $\{(0,\tau_i)\}$
                                  and $\{\eta_0\}$ are mutually perpendicular.
                        \end{proof}

             \subsection{Formula of normal radius for minimal product}
             Another vital quantity  in Lawlor's criterion is the normal radius,
             so in this subsection we derive  its formula rigorously for the minimal product structure.
             
              Let $R_1,\, R_2, \, R$ be the normal radii of $L_1,\ L_2$ and $L_1\times L_2$ in corresponding Euclidean spheres respectively.
                                          For $(\lambda_1 x_1,\la_2 x_2)$ in $L_1\times L_2$, 
                             the combination 
                             \begin{equation}\label{nu}
                                                          \nu=a_1(\xi_1,0)+a_2(0, \xi_2)+a_0\eta_0
                            \end{equation}
                                     where $\xi_1$, $\xi_2$ are unit normals of $L_1$ and $L_2$ at $x_1$ and $x_2$ respectively,
                                     and
                                     $a_1^2+a_2^2+a_0^2=1$,
                                     can exhaust all unit normals of  $L_1\times L_2$ at $(\lambda_1 x_1,\la_2 x_2)$.
                                     Then, in $\mathbb S^{N_1+N_2+1}$,
                                     each unit normal induces a normal geodesic
          \begin{eqnarray}
       %   &&  \label{exp}\\
                                     &&
                                     \exp_{(\lambda_1 x_1,\la_2 x_2)}(s\nu)
                                \label{exp}
                                     \\
                                    &=&
                                    (\cos s)(\lambda_1 x_1,\la_2 x_2 )+(\sin s)\nu 
                                    \nonumber
                                    \\
                           &=&
                           \big(
                           (\la_1\cos s + a_0\la_2\sin s )x_1
                           + 
                           (a_1 \sin s)   \xi_1   
                           ,
                              \nonumber\\
                   &&    \ \ \ \ \ \ \ \ \ \ \ \ \ \ \ \ \ \ \ \ \ \ \ \ \ \ \ \ \ \   \ \ \  \ \ \ \ \ \ \ \ \ \ \ \ \ \ \ \ \ \      (\la_2\cos s-a_0\la_1\sin s)x_2 
                               +     
                               (a_2 \sin s)   \xi_2          
                              \big)
                              \nonumber
     \end{eqnarray}
     
            Therefore, $R$ is the smallest positive number such that there exists 
            $$p\triangleq (\lambda_1 x_1,\la_2 x_2) \in L_1\times L_2$$
                                                           satisfying $ q\triangleq (\la_1 y_1, \la_2 y_2) = \exp_p(R\nu_p)\in L_1\times L_2$ for some $\nu_p\in S_p$.
                                                           Thus,
              \begin{eqnarray}
                     &( \la_1\cos R + a_0\la_2\sin R )x_1
                           + 
                           (a_1 \sin R)   \xi_1   
                           = 
                    \la_1\big(       (\cos \phi_1) x_1+ (\sin \phi_1) \xi_1
                    \big)
                    \label{e1}
\\
                 &  (\la_2\cos R-a_0\la_1\sin R)x_2 
                               +     
                               (a_2 \sin R)   \xi_2 
                        =
                        \la_2\big(  
                            (\cos \phi_2) x_2+ (\sin \phi_2) \xi_2
                                 \big)
                                 \label{e2}
              \end{eqnarray}
                   for some $\phi_1,\phi_2\in \R$ such that $$y_1=  (\cos \phi_1) x_1+ (\sin \phi_1) \xi_1 \text{\ \ \  and \ \ \ } y_2=  (\cos \phi_2) x_2+ (\sin \phi_2) \xi_2.$$
                   
                   Somehow similarly as argued in \cite{TZ},
                   there are three possibilities to realize normal radius $R$,
                   but notice that, a priori, possible
                   values of $\phi_1$ and $\phi_2$ in the above may not be discrete as in the isoparametric situation in \cite{TZ}.
                   Moreover, except being a point of sphere, the normal radius of a closed submanifold in sphere is no larger than $\pi$.
               
             %  {\ }
               
                   The formula of normal radius for minimal product can be derived.
                    \begin{lem}\label{Klem}
       Let $R_1$ and $ R_2$ be the normal radii of $L_1$ and $ L_2$ in $\mathbb S^{N_1}$ and $\mathbb S^{N_2}$ respectively.
       Then the normal radius $R\in (0,\pi)$ of $L_1\times L_2$ in $\mathbb S^{N_1+N_2+1}$ 
       satisfies 
     \begin{eqnarray}
           \cos R
           =
           1-\min\left\{\la_1^2(1-\cos R_1),\, \la_2^2(1-\cos R_2) \right\}.
           \label{l1}
   \end{eqnarray}
       \end{lem}
          \begin{rem}\label{conprod}
       Note that the result is in fact valid when $\la_1$ and $\la_2$ are just two nonzero real numbers $a_1, a_2$ with $a_1^2+a_2^2=1$. 
       However, we will restrict ourselves in the minimal product construction unless otherwise specified.
       \end{rem}
       
       \begin{rem}\label{Ri=pi}
       If some $L_i$ is the full sphere $\mathbb S^{N_i}$, then $R_i$ is defined to be $\pi$.
       \end{rem}
       
       \begin{proof}
       The three possibilities for $R$ based on \eqref{e1} and \eqref{e2} are the followings.
       
               {\bf Case I.} $y_1\neq x_1$ and $y_2\neq x_2$. 
                   
                    Now one must have $\cos \phi_1\leq \cos R_1$ and $\cos \phi_2\leq \cos R_2$ by the geometric meanings of $R_1$ and $R_2$.
                    The sum of $\big<\la_1x_1,\, $           \eqref{e1}$\big>$ and $\big<\la_2x_2 ,$ \eqref{e2}$\big>$ gives
        \begin{eqnarray}
                    \cos R=\la_1^2\cos \phi_1+\la_2^2\cos \phi_2
                             \leq
                             \la_1^2\cos R_1+\la_2^2\cos R_2
                                                              \label{e3}
     \end{eqnarray}

               {\bf Case II.} $y_1\neq  x_1$ and $y_2= x_2$ (or the other way around).
               
               Now $\cos \phi_2=1$ and \eqref{e3} becomes 
   \begin{eqnarray}
                    \cos R\leq \la_1^2\cos R_1+\la_2^2.
                                                              \label{e4}
     \end{eqnarray}
     
                  {\bf Case III.} $y_1= x_1$ and $y_2= x_2$.
       
           Since the ambient is Euclidean sphere, if $R$ occurs in this case then $R=2\pi$
                  (here $2\pi$ corresponds to normal geodesic curve for direction $\nu=\eta_0$ when neither $L_1$ nor $L_2$ contains antipodal points).
                  %Clearly, it is impossible.
                  
                  {\ }
                  
                  So, to search for the value of $R$
                  one should focus on Case II
                  to see whether 
                    \begin{eqnarray}
                    \cos R= \la_1^2\cos R_1+\la_2^2.
                                                              \label{e4b}
     \end{eqnarray}
     can be reached.
                  Note that it is always true that
                  $$0<R<\min\{\la_1 R_1, \, \la_2 R_2\}<\pi.$$
                  Hence $\sin R\neq 0$.
                  So,
                  in Case II,
                  $a_2$ has to be zero in \eqref{e2}.
                  Now we want to solve:
  \begin{quote}
   Given $\phi_1=R_1$ for some $x_1$ and direction $\xi_1$ realizing normal radius $R_1$ of $L_1$ in $\mathbb S^{N_1}$, can we get $a_0,\, a_1$ (with $a_0^2+a_1^2=1$) and 
   $  \nu=a_1(\xi_1,0)+a_0\eta_0$
    to make \eqref{e1}, \eqref{e2} and \eqref{e4b} hold?
   \end{quote}
This can be done by
               \begin{eqnarray}
              %   &
        %         \cos R
        %                  = \la_1^2\cos R_1+\la_2^2,
        %         \label{c1}\\
         & a_0
                     \sin R=\la_1\la_2(\cos R_1-1), 
         \label{c2}\\
        & a_1\sin R=\la_1 \sin R_1. \ \ \ \ \ \ \ \ \ \ \ \ \ 
         \nonumber
         \end{eqnarray}
  automatically with
  $a_0^2+a_1^2=1$.

       Therefore $\cos R$ takes the larger value of $\la_1^2\cos R_1+\la_2^2$ and $\la_1^2+\la_2^2\cos R_2$
       and \eqref{l1} follows for genuine submanifold inputs.
       
       Regarding Remark \ref{Ri=pi},
       when $L_1$ (or $L_2$ or both) is a full sphere, the normal radius $R_1$ (or $R_2$ or both) cannot be well defined as in Definition \ref{Dnr}.
For our purpose,  we always  define the normal radius of a full sphere $\mathbb S^{N_1}$ in itself to be $\pi$.
The reason is the following.
%One is that 
It can be realized as a totally geodesic sphere in  $\mathbb S^{N_1+\ell}$ for $\ell>0$ and now  $\mathbb S^{N_1}$ has normal radius $\pi$.
So the above argument works for the minimal product and the last $\ell$ components of $\R^{N_1+1+\ell}$ can be completely dropped as they contribute nothing 
         (%
         %the minimal product lays inside the unit sphere of first ${N_1+1}$ components and last $N_2+1$ components, so the shortest geodesic to realize the normal radius of the minimal product should stay in the unit sphere $-$
         let $\xi$ be a unit normal vector to the minimal product at some point and if $\xi$ has a nonzero part coming from the last $\ell$ components of $\R^{N_1+1+\ell}$ then the geodesic with initial velocity vector $\xi$ in $\mathbb S^{N_1+\ell+N_2}$ will run at least length $\pi$ to return to the minimal product
         and hence this will not realize the normal radius as  the normal radius is strictly less than $\pi$ for the minimal product $-$ not a totally geodesic sphere, actually no more than $\frac{\pi}{2}$ by Lemma \ref{tcless90}).
            %
%The other angle is that although $\xi_1$ does not exist, if we consider the situation $y_1\neq x_1$ then the only chance to have \eqref{e1} is precisely $\cos \phi_1=-1$  regardless of the existence of $\xi_1$.
           %
           \footnote{
           For example, the normal radius of minimal product $\mathbb S^{N_1}\times \mathbb S^{N_2}$ in $\mathbb S^{N_1+N_2+1}$ is realized by $\eta_0$ and can reach $\frac{\pi}{2}$ only when $N_1=N_2$ according to  \eqref{e4b}, cf. the beautiful uniform formula in \cite{Law} for normal radius of minimal product among full spheres.
           }
              Thus, \eqref{l1} still holds with the understanding in Remark \ref{Ri=pi}.
       \end{proof}
                                                        \begin{rem}\label{indRk}
        %                                                A slightly simplification of \eqref{l1}
        %                                                is that
        %                                                 $$
        %                                                 \ \ \ \ \  \ \ \cos R (L_1\times L_2)=
     %      1-\frac{1}{k_1+k_2}\min\left\{k_1(1-\cos R_1),\, k_2(1-\cos R_2) \right\}.
     %      $$
          Applying this to  $L_1\times L_2\times L_3=(L_1\times L_2)\times L_3$,  we have
           \begin{eqnarray}
          && \cos R(L_1\times L_2\times L_3) 
             \nonumber
\\
           &=&
           1-\min\left\{\frac{k_1+k_2}{k_1+k_2+k_3}(1-\cos R(L_1\times L_2)),\, \frac{k_3}{k_1+k_2+k_3}(1-\cos R_3) \right\}
                 \nonumber\\
                 &=&
                 1-\min\Bigg\{\frac{k_1}{k}(1-\cos R_1), \, \frac{k_2}{k}(1-\cos R_2),\,
        \frac{k_3}{k}(1-\cos R_3) 
                 \Bigg\}.
                       \nonumber
              \end{eqnarray}

                                                        \end{rem}
         By induction, one can get the following formula.
         \begin{cor}
         Let $R_i$ be the normal radii of $L_i$ in $\mathbb S^{N_i}$ for $i=1,\cdots, n$.
         Then the normal radius $R(L_1\times\cdots\times L_n)\in (0,\pi)$ of $L_1\times\cdots\times L_n$ in $\mathbb S^{N_1+\cdots +N_n+n-1}$  satisfies
         \begin{eqnarray}
           \cos R(L_1\times\cdots\times L_n)
           =
           1-\min_{1\leq i\leq n}\left\{\la_i^2(1-\cos R_i) \right\}.
           \label{l2}
   \end{eqnarray}
         \end{cor}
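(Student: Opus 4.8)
The plan is to argue by induction on the number $n$ of factors, using the two-factor formula \eqref{l1} as both the base case and the engine of the inductive step. The case $n=2$ is precisely the preceding Lemma, and Remark \ref{indRk} already executes the case $n=3$ explicitly; the general step repeats the same pattern. So I would assume \eqref{l2} holds for $n-1$ factors and prove it for $n$.

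For the inductive step I would write $L_1\times\cdots\times L_n=(L_1\times\cdots\times L_{n-1})\times L_n$ and apply the two-factor formula \eqref{l1} to this decomposition. The first thing to verify is that the minimal product is associative at the level of scaling factors: if $\mu_i=\sqrt{k_i/(k_1+\cdots+k_{n-1})}$ is the scaling of $L_i$ inside $L_1\times\cdots\times L_{n-1}$, then rescaling that entire block by $\sqrt{(k_1+\cdots+k_{n-1})/k}$ sends $L_i$ to $\sqrt{(k_1+\cdots+k_{n-1})/k}\cdot\mu_i=\sqrt{k_i/k}=\lambda_i$, so the two-step construction reproduces the one-step product $L_1\times\cdots\times L_n$ with exactly the intended factors $\lambda_i$. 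Writing $k'=k_1+\cdots+k_{n-1}$, formula \eqref{l1} then yields
\begin{equation*}
\cos R(L_1\times\cdots\times L_n)=1-\min\left\{\tfrac{k'}{k}\big(1-\cos R(L_1\times\cdots\times L_{n-1})\big),\ \tfrac{k_n}{k}(1-\cos R_n)\right\}.
\end{equation*}

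Next I would substitute the inductive hypothesis $1-\cos R(L_1\times\cdots\times L_{n-1})=\min_{1\leq i\leq n-1}\{\mu_i^2(1-\cos R_i)\}$ into the first slot and use the elementary fact that a positive constant distributes across a minimum: since $k'/k>0$,
\begin{equation*}
\tfrac{k'}{k}\min_{1\leq i\leq n-1}\left\{\tfrac{k_i}{k'}(1-\cos R_i)\right\}=\min_{1\leq i\leq n-1}\left\{\tfrac{k_i}{k}(1-\cos R_i)\right\}=\min_{1\leq i\leq n-1}\left\{\lambda_i^2(1-\cos R_i)\right\}.
\end{equation*}
Merging this with the $n$-th term $\lambda_n^2(1-\cos R_n)$ under the outer minimum collapses the nested minima into a single one, giving $\cos R=1-\min_{1\leq i\leq n}\{\lambda_i^2(1-\cos R_i)\}$, as claimed; the fact that $R\in(0,\pi)$ is inherited from the same range statement in \eqref{l1} applied to the two-factor decomposition.

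The argument is entirely routine once \eqref{l1} is available, so I do not expect a genuine obstacle. The one point that truly needs care — and the place where a careless induction would slip — is the bookkeeping of the scaling factors: confirming that the inner $\mu_i$ recombine with the outer factor $\sqrt{k'/k}$ to give precisely $\lambda_i$, and that the constant $k'/k$ can be absorbed into the inner minimum without changing which index attains it. Everything beyond this is algebra.
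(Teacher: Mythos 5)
Your proof is correct and is essentially the paper's own argument: the paper justifies the corollary with the single line ``by induction'' from the two-factor formula \eqref{l1}, with Remark \ref{indRk} carrying out the $n=3$ case via exactly your decomposition $(L_1\times\cdots\times L_{n-1})\times L_n$ and the same absorption of the positive constant into the inner minimum. Your explicit verification that the inner scalings $\mu_i$ recombine with the outer factor $\sqrt{k'/k}$ to give $\lambda_i$ is a detail the paper leaves implicit, but the route is identical.
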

   %      \begin{rem}
   %      Not as in \S \ref{3.1}, the formula here does not need $\{L_i\}$ to be minimal but depends on the constant multipliers $\{\la_i\}$ in the construction of the product. 
    %     \end{rem}
         \begin{rem}
        Similar statement formulated independently and some partial discussion included in \cite{JCX}.
        
         \end{rem}

{\ }

                    \section{Configuration results}
                                       
                                      This section 
                                      consists of three parts.
                                      
                                      In \S \ref{S41}
                                      the general configuration results Theorems \ref{main} and \ref{addthm}
                                      and Corollary \ref{cor}
                                      will be proved.
                                      The main idea is to 
                                      gain $\th_c< \frac{R}{2}$
                                      in the asymptotic sense
                                      based on Lemmas \ref{tc} and \ref{Klem}.
                                      
                                        In \S \ref{S42}
                                        some useful controls
                                        especially a key inequality on normal radius of minimal product $L_1\times L_2$
                                        %which perfectly fits Lemma \ref{tc}
                                        will be given in Lemma \ref{KC}.
                                        Such an inequality 
                                        together with Lemma \ref{tc}
                                        can lead to $\th_c< \frac{ R}{2}$
                                        if both $\th_c$ and $R$ of $L_1\times L_2$ are decided by 
                                        the same component, which itself spans a Type-c cone.
                                       As a result, 
                                        $L_1\times L_2$ also spans a Type-c cone.
                                        
                                        In \S \ref{S43}
                                        we shall provide a partial positive answer to
                                        Question $(\star)$ in \S \ref{S1}
                                        under some assumptions.
                                        Besides, a quick proof of the main result of \cite{TZ}
                                        will be explained in Theorem \ref{TZmain} 
                                        following 
                                        Remark \ref{full}.
                                        Moreover, 
                                        we derive some more subtle structures 
                                        (e.g. dimension gap of Theorem \ref{main} in Theorem \ref{gap}) 
                                        complementing the asymptotic results in Theorems \ref{main} and \ref{addthm}.
                                        Some other configuration results within the category of Type-c cones will also be discussed.
                                     
                                     \subsection{Asymptotic results for general closed minimal submanifold.}  \label{S41}
                                       For simplicity, suppose that the set $\{L_i\}$ of closed minimal submanifolds in spheres  in assumption contains exactly one element $L_1^{k_1}\subset \mathbb S^{N_1}$.
                                       
                                  \begin{pfm}
                                  Let $R_1>0$ be the normal radius of $L_1$ in $\mathbb S^{N_1}$.
                                  Denote the minimal product of $n$-copies of $L_1$  by $L_1^{\times n}$.
                                  Then, by \eqref{l2} and Lemma \ref{tcless90} in below,
                                  the normal radius $R(L_1^{\times n})$ is no more than $\frac{\pi}{2}$ and its exact value is
            \begin{equation}\label{eqR}
 \arccos \left(1-\frac{1}{n}(1-\cos R_1)\right)
                                  =
                                  \sqrt {2(1-\cos R_1)}\cdot \frac{1}{\sqrt n} + \text{ higher order terms}.
             \end{equation}
                                  
                                  On the other hand, denote by $\alpha_1$ and $\alpha$ the supremum norms  of second fundamental forms of  $L_1$ and $L_1^{\times n}$ respectively.
                                  By induction, 
                                                   Lemma \ref{alpha}
                                                   implies that 
                                                   $$
                                                   \alpha=\sqrt{nk_1
                                                                       \max
                                                                       \left
                                                                       \{
                                                                       1,
                                                                       \frac{\alpha_1^2}{k_1}
                                                                      \right
                                                                       \}
                                                                       }
                                                                       =
                                                                       \frac{nk_1+1}{12}
                                                                        \frac{12}{nk_1+1}
                                                                       \sqrt{n{k_1}
                                                                       \max
                                                                       \left
                                                                       \{
                                                                       1,
                                                                       \frac{\alpha_1^2}{k_1}
                                                                      \right
                                                                       \}
                                                                       }.
                                                   $$
                                                   Now, by applying  Lemma \ref{tc} with $m=nk_1+1$ and $r=12$, we get
                                         \begin{eqnarray}
                                         \ \ \ \ \ \ \ \ \ \ \ \ 
                                                   \tan \theta_c
                                                   \Big(nk_1+1, \alpha
                                                   \Big)
                                                   <
                                                   \frac{ 12}{nk_1+1}\tan \theta_c
                                                            \left(
                                                                           12,
                                                                               \, 
                                                                                 \frac{12}{nk_1+1}
                                                                       \sqrt{n{k_1}
                                                                       \max
                                                                       \left
                                                                       \{
                                                                       1,
                                                                       \frac{\alpha_1^2}{k_1}
                                                                      \right
                                                                       \}
                                                                       }
                                                                                 \right)
                                                                                 \label{ineqtc}
                                     \end{eqnarray}
                                               when $nk_1+1>12$.
                                                   Note that, as $n\uparrow \infty$, the tangent term on the right hand side of \eqref{ineqtc} limits to $\tan 7.97^\circ$ according to the vanishing angle $\theta_c$ in the last column of the table on page 21 of \cite{Law}.
                                                   
                                                   Therefore, when $n$ is large enough, by comparing leading terms of \eqref{eqR} and \eqref{ineqtc} we have
                                                   $$
                                                   \theta_c
                                                   \left(nk_1+1, \alpha
                                                   \right)
                                                   <
                                                     \tan \theta_c
                                                   \big(nk_1+1, \alpha
                                                   \big)
                                                   <
\frac{R(L_1^{\times n})}{2}.
                                                   $$
According to Lawlor's criterion Theorem \ref{LC}, the cone $C(L_1^{\times n})$ is area-minimizing in $\R^{n(N_1+1)}$.                                  
                                  \end{pfm}     
                                  
                                       \begin{rem}\label{full}
                                       To get  full generality in Theorem \ref{main},
                                       notice that finitely many $\{L_i\}$
                                       also imply the finiteness property of 
                                       $ \max_{1\leq i\leq n}
                                                                       \left
                                                                       \{
                                                                       1,
                                                                       \frac{\alpha_i^2}{k_i}
                                                                      \right
                                                                       \}$
                                                                       and
                                                                       consequently
                                                                                  \begin{eqnarray}
                                         \ \ \ \ \ \ \ \ \ \ \ \ 
                                                   \tan \theta_c
                                                   \Big(k+1, \alpha
                                                   \Big)
                                                   <
                                                   \frac{ 12}{k+1}\tan \theta_c
                                                            \left(
                                                                           12,
                                                                               \, 
                                                                                 \frac{12}{k+1}
                                                                       \sqrt{k
                                                                      \max_{1\leq i\leq n}
                                                                       \left
                                                                       \{
                                                                       1,
                                                                       \frac{\alpha_i^2}{k_i}
                                                                      \right
                                                                       \}
                                                                       }
                                                                                 \right)
                                                                    \label{Rec}
                                     \end{eqnarray}
                                     where $k$ is the dimension of a minimal product among $\{L_i\}$.
                                                                       Also 
                                                                       note that
                                                                       \eqref{l2}
                                                                       leads to
                                                                       $
                                                                         \cos R
           \leq
           1-\frac{1}{k}\min_{1\leq i\leq n}\left\{        1-\cos R_i    \right\}
                                                                       $
                                                                       which gives
                     \begin{equation}\label{roughcontrol}
                                                                       R\geq
                                                                       \arccos
                                                                       \left(
                                                                       1-\frac{1}{k}\min_{1\leq i\leq n}\left\{        1-\cos R_i    \right\}
                                                                       \right)
                                                                        =
                                  \sqrt {2\min_{1\leq i\leq n}\left\{        1-\cos R_i    \right\}}\cdot \frac{1}{\sqrt k} + ...
                 \end{equation}
                             As a result, whenever $k$ is sufficiently large one can apply Theorem \ref{LC} for the conclusion of Theorem \ref{main}.
                             
                             If, further beyond finiteness, there are infinitely many $\{L_\ell\}_{\ell\in\mathscr A}$ with 
                       \begin{equation}\label{infmany}
                                    \sup_{\ell \in \mathscr A}
                                                                       \left
                                                                       \{
                                                                       1,
                                                                       \frac{\alpha_\ell^2}{k_\ell}
                                                                      \right
                                                                       \}
                                                                       <
                                                                       \infty
                                                                 \text{
                                                                 \ \ \ \ 
                                                                       and
                                                               \ \ \ \        }
                                                                       \inf_{\ell\in\mathscr A}     \left\{    k_\ell (1-\cos R_\ell )   \right\}
                                                                       >0,
                           \end{equation}
                                                                       then the conclusion of Theorem \ref{main} still holds.
                                                                                                                                        \end{rem}
                                       
                                       Note that, by \cite{L-Z}, for fixed $k_1\geq 3$ and every $\delta, \delta'>0$
                                       there exists spiral minimal product type $L_1$ of dimension $k_1$ in some sphere
                                       with its normal radius $R_1<\delta$ and $\frac{\alpha_1^2}{k_1}>(\delta')^{-1}$ where   $\alpha_1$ is its supremum norm  of  second fundamental form with respect to unit normals.
                                       So generally  there is no  control no these two quantities for a minimal submanifold (of codimension larger than two) in sphere.
                                       However, 
                                     machinery works regardless of 
                                     the initial figure of $L_1$ at all
                                     for
                                    spanning area-minimizing cones $C(L_1^{\times n})$ in the asymptotic sense in Theorem \ref{main}.
                                        
                                        {\ }
                                           
                                           Apparently, the idea here can lead to many general configuration results.
                                           Among others, we get a proof of Theorem \ref{addthm}.

                                        \begin{pfm3}
                                        Let $k_1$ be the dimension of $L\subset \mathbb S^{N}$.
                                              Then by Lemma \ref{Klem}
                                              the normal radius $R=R(L\times \mathbb S^d)$ of  $L\times \mathbb S^{d}$ %in  \eqref{l1}
                                                satisfies
                                                 $$
                                                       \cos R
           =
           1-\min\left\{\frac{k_1}{k_1+d}(1-\cos R_1),\, \frac{2d}{k_1+d} \right\}.
                                                 $$
                                           Hence, when $d$ is large (e.g. $d\geq k_1$),
                                           we get      
                                            $$
                                                       \cos R
           =
           1-\frac{k_1}{k_1+d}(1-\cos R_1)
           $$
                     Similar to the argument in the proof of Theorem \ref{main}
                     we have 
                     
                      \begin{equation*}\label{eqR'}
 \arccos \left(1-\frac{k_1}{k_1+d}(1-\cos R_1)\right)
                                  =
                                \frac{\sqrt {2k_1(1-\cos R_1)}}{\sqrt {k_1+d}} + \text{ higher order terms}.
             \end{equation*}

                                Now note that
                                by Lemma \ref{alpha}
                                 $$\alpha^2=(k_1+d)\max\left\{ \frac{\alpha_1^2}{k_1},\, 1\right\}.$$
                     Therefore, 
                     according to Lemma \ref{tc} with $m=k_1+d+1$ and $r=12$, we get
                                         \begin{eqnarray}
                              \nonumber
                                                   \tan \theta_c
                                                   \Big(k_1+d+1, \alpha
                                                   \Big)
                                                   <
                                                   \frac{ 12}{k_1+d+1}\tan \theta_c
                                                            \left(
                                                                           12,
                                                                               \, 
                                                                                 \frac{12}{k_1+d+1}
                                                                       \sqrt{(k_1+d)
                                                                       \max
                                                                       \left
                                                                       \{
                                                                       1,
                                                                       \frac{\alpha_1^2}{k_1}
                                                                      \right
                                                                       \}
                                                                       }
                                                                                 \right)
                                                                                 \label{ineqtc}
                                     \end{eqnarray}
                                               when $k_1+d+1>12$.
                     
                     So,  when $d$ is large enough, by comparing leading terms we have
                                                   $$
                                                   \theta_c
                                                   \left(k_1+d+1, \alpha
                                                   \right)
                                                   <
                                                     \tan \theta_c
                                                   \big(k_1+d+1, \alpha
                                                   \big)
                                                   <
\frac{R(L\times \mathbb S^d)}{2}.
                                                   $$
By Lawlor's criterion Theorem \ref{LC} the cone $C(L\times \mathbb S^d)$ is area-minimizing in $\R^{n+d+2}$.        

              {\ }
                     
                     If one replaces $\mathbb S^d$ by $\mathbb S^{d_1}\times \cdots \mathbb S^{d_\ell}$,
                     then the generalization in Remark \ref{rem14} holds  due to the same kind of reasoning.
                                                            \end{pfm3}

                                                             \begin{rem}
                        %
                               %The codimension  of $C$ is $n-k+1$.
                               Given a closed minimal hypersurface $L$ in $\mathbb S^n$ ($n\geq 3$) and  any integer  $\tau\geq2$.
                              By  Theorem \ref{addthm} and Remark \ref{rem14}, 
                              it follows that based on $L$ there exist infinitely many area-minimizing cones in Euclidean spaces having the fixed codimension $\tau$.
                               \end{rem}

                                          Beside the advantage of control on the precise codimension as in Remark \ref{rem14},
                                          Theorem \ref{addthm} can lead to  a simple proof of Corollary \ref{cor}.

                                          \begin{pfm2}
                                          Set
                                          $$\mathscr A=\{\text{closed minimal submanifolds of dimension $> 0$ in Euclidean spheres}\}$$
                                          and
                                          $$\mathscr B=\{\text{regular area-minimizing cones of dimension $> 1$ in Euclidean spaces}\}.$$
                                          Then the mapping 
                                          $${\tt F}: B\in \mathscr B\longrightarrow B\cap \text{the corresponding unit sphere}\in \mathscr A$$
                                           is obviously injective. 
                                          
                                          Conversely, for every  $A(\subset \mathbb S^{N_A})$ $\in \mathscr A$, according to Theorem \ref{addthm} there exists a smallest integer $d_A\geq \dim A +1$  such that the cone over $A\times \mathbb S^d$ is area-minimizing when $d\geq d_A$
                                          \footnote{Also see Theorem \ref{gap} for a possible gap behavior.}.
                                          So, if we define 
                                          \begin{equation}\label{ttG}
                                          {\tt G}: A \longrightarrow C\left(A\times \mathbb S^{d_A+N_A} \right),
                                          \end{equation}
                                           then it is not hard to see that $\tt G$ is injective from $\mathscr A$ to $\mathscr B$.
                                          
                                          Hence, by the Schr\"oder–Bernstein theorem in the set theory, there exists a bijective map between $\mathscr A$ and $\mathscr B$.
                                          As a result,  they share the same cardinality. 
                                          \end{pfm2}
                                          
                                          \begin{rem}
                                          Here we add $N_A$ in the dimensional part of $\mathbb S$ in \eqref{ttG} to distinguish $A$ being a minimal submanifold in different ambient spheres,
                                          since we regard $A\subset \mathbb S^{N_A}$ and $A (\subset \mathbb S^{N_A})\subset \mathbb S^{N_A+1}$ as two distinct elements in $\mathscr A$.
                                          \end{rem}

                                          \begin{rem}
                                          By \cite{CM}, for any positive integer $\tt n$, there is a real $\tt n$-dimensional family of minimal Lagrangian tori in $\mathbb CP^2$ and therefore an $\tt n$-dimensional family of special Legendrian tori in $\mathbb S^5$.
                                          %(also see  \cite{L-Z} for global horizontal liftings for case of dimension larger than two)
                                          Hence the cardinality of special Legendrian tori in $\mathbb S^5$ is no less than that of the real numbers.
                                          \end{rem}
                                        
                                        \begin{rem}
                                        In fact one can also apply Theorem \ref{main} for Corollary \ref{cor} but needs to be more careful in choosing corresponding $\tt G$ map.
                                        \end{rem}
                                        
                                          {\ }
                                          
                                          Before closing this subsection,
                                          we would like to remark that all previously discovered area-minimizing cones $C(\tilde L)$ through Lawlor's curvature criterion share 
                                          the property (or restriction) that the curvature function $\alpha$ remains constant in the entire  $\tilde L$,
                                          whereas
                                          in our Theorems \ref{main} and \ref{addthm}
                                           $\alpha$ can be a non-constant function over $\tilde L$.

                                          {\ }

                                       %   {\ }
                                          
                                            \subsection{Key control on normal radius of minimal product construction. }\label{S42}
                                            Instead of asymptotic behaviors in \S \ref{S41},
                                            we shall focus on some control regarding Question $(\star)$ of \S \ref{S1}
                                            in this subsection.
                                           More precisely, we wish to show that, in some circumstances, 
                                           the minimal product of links $L_1, L_2$ of two Type-c cones 
                                           can again span a  Type-c cone. 
                                           So we aim to get the  inequality $\th_c(k+1,\alpha)<\frac{R(L_1\times L_2)}{2}$
                                           where $k=k_1+k_2$ and $k_i$ is the dimension of $L_i$ for $i=1,2$.
                                           
                                          In particular we will give some structural inequality for normal radius of minimal products.
                                          The first three results actually work for constant products, see Remark \ref{conprod}.
                                          Here the constant product construction merely means that $\{\la_i\}$ in the algorithm  \eqref{prod} are  nonzero numbers satisfying $\sum_{i=1}^n\la_i^2=1$.

                                           \begin{lem}\label{tcless90}
                                           Let $R_1, R_2, R$ be normal radii of $L_1, L_2$ and $L_1\times L_2$ respectively with $k_1, k_2\geq 1$.
                                           Then $R\leq \frac{\pi}{2}$.
                                           \end{lem}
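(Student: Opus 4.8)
The plan is to convert the inequality $R\leq \frac{\pi}{2}$ into the statement $\cos R\geq 0$ and then read off $\cos R$ from the formula for the normal radius of a minimal product established just above, namely
\[
\cos R = 1-\min\left\{\la_1^2(1-\cos R_1),\, \la_2^2(1-\cos R_2)\right\}.
\]
Since $\cos$ is decreasing on $[0,\pi]$ and $R\in(0,\pi)$, the desired bound $R\leq\frac{\pi}{2}$ is equivalent to $\cos R\geq 0$, i.e.\ to the single inequality
\[
\min\left\{\la_1^2(1-\cos R_1),\, \la_2^2(1-\cos R_2)\right\}\leq 1 .
\]
So the whole task reduces to bounding this minimum by $1$.

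The key observation is that the weights satisfy $\la_1^2+\la_2^2 = \frac{k_1}{k_1+k_2}+\frac{k_2}{k_1+k_2}=1$, so at least one of them is at most $\frac12$. Say $\la_1^2\leq \frac12$ (the other case is symmetric). Because $R_1\in(0,\pi]$ gives $\cos R_1\geq -1$, we have the crude bound $1-\cos R_1\leq 2$, and hence
\[
\la_1^2(1-\cos R_1)\leq \tfrac12\cdot 2 = 1 .
\]
Since the minimum of the two quantities is no larger than $\la_1^2(1-\cos R_1)$, the required inequality $\min\{\cdots\}\leq 1$ follows, and therefore $\cos R\geq 0$, i.e.\ $R\leq\frac{\pi}{2}$.

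The argument has no real obstacle beyond a single point of care: the crude estimate $1-\cos R_i\leq 2$ must be applied to the factor carrying the \emph{smaller} weight $\la_i^2\leq\frac12$, since the term with the larger weight could in principle exceed $1$. The hypothesis $k_1,k_2\geq 1$ is what guarantees both weights are genuinely positive (so that $L_1\times L_2$ is a bona fide minimal product), but the estimate itself only uses $\la_1^2+\la_2^2=1$; in fact equality $R=\frac{\pi}{2}$ would force $\la_1^2=\la_2^2=\frac12$ (that is, $k_1=k_2$) together with $R_1=R_2=\pi$, which explains why the bound is attained only in a degenerate configuration.
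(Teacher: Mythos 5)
Your proof is correct and is essentially the paper's argument: both reduce $R\leq\frac{\pi}{2}$ to $\cos R\geq 0$ via the product formula \eqref{l1}, using only $\la_1^2+\la_2^2=1$ and $\cos R_i\geq -1$. The sole difference is cosmetic --- the paper bounds the maximum of the two candidate values for $\cos R$ by their sum, getting $2\cos R\geq 1+\la_1^2\cos R_1+\la_2^2\cos R_2\geq 0$, whereas you bound the minimum in \eqref{l1} by the term carrying the weight $\leq\frac{1}{2}$ --- and both manipulations identify the same equality case $k_1=k_2$, $R_1=R_2=\pi$.
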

                                           \begin{proof}
                                           Adding up the two possible values in \eqref{l1},
                                           we have 
                                \begin{equation}\label{accute}
                                           2\cos R\geq 1+\la_1^2\cos R_1+\la_2^2\cos R_2\geq 0.
                                  \end{equation}
                                           Hence $R\leq \frac{\pi}{2}$
                                           and 
                         equality holds exactly when $R_1=R_2=\pi$ and $k_1=k_2$ simultaneously.
                                           \end{proof}
                                           
                                           \begin{lem}\label{ineqcos}
                                           $
                                           \cos R> \cos R_i
                                        $
                                              \text{ and }
                                  $
                                           0<\sin R<\sin R_i
                              $
                                           \text{ for  } $i=1,2$.
                                           \end{lem}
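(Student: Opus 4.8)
The plan is to read both inequalities directly off the closed formula \eqref{l1}, using only that $\la_1^2+\la_2^2=1$ with $\la_1,\la_2\neq 0$ (so $\la_i^2\in(0,1)$) and that $R_i\in(0,\pi]$ forces $1-\cos R_i>0$. For the cosine inequality I would rewrite \eqref{l1} as
\[
1-\cos R=\min\left\{\la_1^2(1-\cos R_1),\,\la_2^2(1-\cos R_2)\right\}\leq \la_i^2(1-\cos R_i)<1-\cos R_i
\]
for each $i=1,2$, where the last step uses $\la_i^2<1$. Hence $\cos R>\cos R_i$, and since cosine is strictly decreasing on $(0,\pi)$ this already yields $R<R_i$.

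Positivity of $\sin R$ is immediate, since Lemma \ref{tcless90} gives $R\in(0,\tfrac{\pi}{2}]$. For the upper bound $\sin R<\sin R_i$ I would pass to squares (both $R,R_i\in(0,\pi)$ have positive sine) and write $\sin^2\theta=(1-\cos\theta)(1+\cos\theta)=f(1-\cos\theta)$ with $f(t)=t(2-t)$, so that the claim becomes $f(1-\cos R)<f(1-\cos R_i)$. The decisive observation is that Lemma \ref{tcless90} places $1-\cos R$ in $[0,1]$, the interval on which $f$ is strictly increasing; together with the strict bound $1-\cos R<1-\cos R_i$ from the first part, this gives $f(1-\cos R)<f(1-\cos R_i)$ as soon as $1-\cos R_i$ also lies in $[0,1]$, i.e. as soon as $R_i\leq\tfrac{\pi}{2}$.

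The main obstacle is therefore the regime $R_i>\tfrac{\pi}{2}$, where $1-\cos R_i>1$ and the monotonicity of $f$ can no longer be invoked directly. Here I would use the exact value of $\sin^2 R$ at the realizing configuration rather than a crude bound. Let $j$ be an index attaining the minimum in \eqref{l1} and set $u=1-\cos R_j$, so that \eqref{l1} reads $\cos R=1-\la_j^2u$ and hence
\[
\sin^2 R=1-(1-\la_j^2u)^2=\la_j^2u\,(2-\la_j^2u).
\]
Comparing with $\sin^2 R_j=u(2-u)$ and dividing by $u>0$, the inequality $\sin R<\sin R_j$ reduces after simplification to $u\,(1+\la_j^2)<2$; since $\la_j^2<1$ forces $\tfrac{2}{1+\la_j^2}>1$, this is automatic whenever $u\le 1$, i.e. whenever $R_j\le\tfrac{\pi}{2}$. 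For the other index $i\neq j$ one applies the same identity together with $1-\cos R=\la_j^2u\le\la_i^2(1-\cos R_i)<1-\cos R_i$ and the monotonicity of $f$ on $[0,1]$, noting that $1-\cos R$ again lies in $[0,1]$ by Lemma \ref{tcless90}. Thus the genuinely delicate point is precisely the control of the large-normal-radius range $R_i\in(\tfrac{\pi}{2},\pi)$; once each $R_i\le\tfrac{\pi}{2}$, the two inequalities of the lemma drop out of \eqref{l1} and Lemma \ref{tcless90} with no further work.
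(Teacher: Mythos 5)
Your proofs of $\cos R>\cos R_i$ and of $\sin R>0$ are correct and are essentially the paper's own argument (the paper phrases the first as $\cos R=\max\{\lambda_2^2+\lambda_1^2\cos R_1,\,\lambda_1^2+\lambda_2^2\cos R_2\}\geq\cos R_i$, with strictness supplied at the end). For $\sin R<\sin R_i$, your monotonicity-of-$f$ step is the same mechanism the paper invokes when it says that $\cos(\cdot)$ is strictly decreasing on $[0,\frac{\pi}{2}]$ and cites Lemma \ref{tcless90}; and, as you correctly observe, that mechanism needs \emph{both} $R$ and $R_i$ to lie in $[0,\frac{\pi}{2}]$, whereas Lemma \ref{tcless90} controls only $R$. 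So the regime you flag as unresolved, $R_i\in(\frac{\pi}{2},\pi]$, is precisely the regime the paper's proof passes over in silence.

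You could not have closed that gap, because the lemma as printed is false there; your own sharp criterion $u(1+\lambda_j^2)<2$ already says so (put $u=2$). Concretely, take $L_1=L_2$ to be totally geodesic equators $\mathbb S^1\subset\mathbb S^2$, so $k_1=k_2=1$ and $R_1=R_2=\pi$. Then \eqref{l1} gives $\cos R=0$, i.e. $R=\frac{\pi}{2}$ (exactly the equality case recorded in the proof of Lemma \ref{tcless90}), while $\sin R=1>0=\sin R_1=\sin R_2$; the paper's concluding assertion that $\sin R_i>0$ also fails for such links. The correct statement needs the extra hypothesis $R_i\leq\frac{\pi}{2}$; equivalently, since $\cos R>\cos R_i$ is already proved, the sine inequality holds if and only if $\cos R+\cos R_i>0$, which is exactly what your identity $\sin^2R=\lambda_j^2u\,(2-\lambda_j^2u)$ computes. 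That hypothesis is available everywhere the paper actually uses the lemma: Corollary \ref{Dcoro} is exploited inside Lemma \ref{KC}, which assumes $R_1\leq\frac{\pi}{2}$, as does Case 1 of the proof of Theorem \ref{t2}. So judged against the corrected statement, your proof is complete and more informative than the paper's; judged against the printed statement, your proof and the paper's share the same hole, and yours is the one that makes the hole visible.
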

                                           
                                           \begin{proof}
                                           Since $\cos R=\max\{\la_2^2+\la_1^2\cos R_1,\, \la_1^2+\la_2^2\cos R_2 \}\geq \cos R_i$ for $i=1,2$.
                                           As function $\cos(\cdot)$ is strictly decreasing in the interval $[0,\frac{\pi}{2}]$,
                                           by Lemma \ref{tcless90} we have $\sin R\leq \sin R_i$ for $i=1,2$.
                                           Since any closed submanifold has positive normal radius, we get $\sin R>0, \sin R_i>0$
                                           and all inequalities in the proof must be strict ones.
                                           \end{proof}

                                            The following is an immediate corollary.
                                           
                                           \begin{cor}\label{Dcoro}
                                           Assume that \eqref{l1} is taken for $i=1$, i.e., $\cos R=1-\la_1^2(1-\cos R_1)$.
                                           Then, we have
                    \begin{eqnarray}
                                           \tan \frac{R}{2}
                                           >
                                           \la_1^2\tan \frac{R_1}{2}.
                          \label{Dineq}
                           \end{eqnarray}
                                           \end{cor}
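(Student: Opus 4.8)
The plan is to compute $\tan\frac{R}{2}$ directly through the half-angle identity $\tan\frac{\theta}{2}=\frac{1-\cos\theta}{\sin\theta}$, which I choose precisely because it places $1-\cos R$ in the numerator, exactly the quantity controlled by the hypothesis. First I would rewrite the assumption \eqref{l1} for $i=1$ as $1-\cos R=\la_1^2(1-\cos R_1)$ and substitute, obtaining $\tan\frac{R}{2}=\frac{\la_1^2(1-\cos R_1)}{\sin R}$. Then, applying the same identity to $R_1$ in the form $\tan\frac{R_1}{2}=\frac{1-\cos R_1}{\sin R_1}$, I would factor the right-hand side to isolate $\la_1^2\tan\frac{R_1}{2}$, arriving at the exact identity
\[
\tan\frac{R}{2}=\la_1^2\,\tan\frac{R_1}{2}\cdot\frac{\sin R_1}{\sin R}.
\]

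With this identity in hand, the desired inequality \eqref{Dineq} collapses to the single claim $\frac{\sin R_1}{\sin R}>1$, that is $\sin R<\sin R_1$. But this is exactly the second assertion of Lemma \ref{ineqcos} taken with $i=1$, which also records $\sin R>0$; hence the division above is legitimate and the resulting inequality is strict. So once the identity is set up, the corollary follows immediately, with no further computation.

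Consequently the only genuine content is the sine comparison $\sin R<\sin R_1$, and this is where the real work already sits in the preceding lemma: from $\cos R>\cos R_1$ one reads off $R<R_1$, but translating this into a strict inequality of sines uses that both angles lie in $[0,\frac{\pi}{2}]$, which is supplied by $R\le\frac{\pi}{2}$ from Lemma \ref{tcless90}. Thus I do not expect a substantive obstacle in proving the corollary itself; the creative step is simply selecting the half-angle form $\frac{1-\cos\theta}{\sin\theta}$ rather than $\frac{\sin\theta}{1+\cos\theta}$ or the radical $\sqrt{\frac{1-\cos\theta}{1+\cos\theta}}$, since only this choice lets the hypothesis on $1-\cos R$ cancel cleanly and leaves behind precisely the sine ratio already estimated in Lemma \ref{ineqcos}.
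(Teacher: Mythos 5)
Your proposal is correct and takes essentially the same route as the paper: the paper's one-line proof likewise applies the half-angle identity $\tan\frac{R}{2}=\frac{1-\cos R}{\sin R}$, substitutes $1-\cos R=\la_1^2(1-\cos R_1)$ from the hypothesis, and invokes Lemma \ref{ineqcos} ($0<\sin R<\sin R_1$) to obtain the strict inequality. The only cosmetic difference is that you package the comparison as an exact identity with the factor $\frac{\sin R_1}{\sin R}>1$, whereas the paper writes it directly as a chain of (in)equalities.
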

                                           
                                           \begin{proof}
                                           It follows from Lemma \ref{ineqcos} that
                     \begin{eqnarray}
                                    \tan \frac{R}{2}
                                           =
                                           \frac{1-\cos R}{\sin R}
                                         >
                                              \frac{\la_1^2(1-\cos R_1)}{\sin R_1}
                                              =
                                              \la_1^2
                                              \tan\frac{R_1}{2}.
                                                                                                           \nonumber
                           \end{eqnarray}
                                           \end{proof}

                                                          Here are some thoughts useful for our next step.
                                                                      Suppose that $L_1$ spans a Type-c cone.
                                                                      Then
                                                                   $$
                                           \tan \frac{R}{2}
                                           >   
                                           \la_1^2\tan \frac{R_1}{2}
                                           >
                                           \la_1^2\tan \th_c(k_1+1, \alpha_1),
                                           $$
                                           and we would be happy 
                                           if, for some reason,
                                           $$
                                          \tan \th_c(k_1+k_2+1, \alpha)
                                          \leq 
                                           \frac{k_1}{k_1+k_2}
                                           \tan \th_c(k_1+1, \alpha_1),
                                           $$
                                           since
                                           this could imply
                                           that 
                                           $\th_c(k+1,\alpha)<\frac{R(L_1\times L_2)}{2}$
                                           as we want.
                                           But,
                                           comparing this with 
                                           Lemma \ref{tc},
                                           one can see that the ratio outside $\tan$
                                           is $\frac{k_1+1}{k_1+k_2+1}$
                                           which is larger than 
                                           the constant 
                                                                                      $\frac{k_1}{k_1+k_2}$.
                                                         It seems very hard or likely impossible to
                                                         improve the ratio
                                                         on the right hand side
                                                         of \eqref{ratiotc}
                                                         in Lemma \ref{tc}
                                                         to be 
                                                         $\frac{k_1}{k_1+k_2}$.
                                                         (The same problem exists for the Type-F estimate \eqref{ratiotF} of Lemma \ref{tF}.)
                                        To conquer this difficulty,
                                        instead we turn to improve the inequality \eqref{Dineq}
                                        and get what we need for our purpose.  

                                        \begin{lem}[Key control]
                                        \label{KC}
                                        As in Corollary \ref{Dcoro},
                                        $\cos R=1-\la_1^2(1-\cos R_1)$.
                                   Assume 
                                   
                                    (1) $k_2\geq 4$ \ \ or\ \  
                                    (2) $k_2\geq 3, k_1\geq 2$ 
                                      \\  and assume $R_1\leq \frac{\pi}{2}$.
                                        Then for the minimal product construction we have
                                             \begin{eqnarray}
                                           \tan \frac{R}{2}
                                      \geq 
                                          \frac{k_1+1}{k_1+k_2+1}
                                           \tan \frac{R_1}{2}.
                          \label{Impineq}
                           \end{eqnarray}
                                        \end{lem}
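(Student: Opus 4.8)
The plan is to convert the claimed inequality into a purely algebraic one by means of the half-angle identity $\tan^2\frac{\theta}{2}=\frac{1-\cos\theta}{1+\cos\theta}$, which is legitimate here since all the angles involved lie in $(0,\pi)$. First I would record $\la_1^2=\frac{k_1}{k_1+k_2}$ and rewrite the standing hypothesis as $1-\cos R=\la_1^2(1-\cos R_1)$ and $1+\cos R=2-\la_1^2(1-\cos R_1)$. Setting $u=1-\cos R_1$ and $k=k_1+k_2$, this gives the compact expressions
$$\tan^2\frac{R}{2}=\frac{k_1 u}{2k-k_1 u},\qquad \tan^2\frac{R_1}{2}=\frac{u}{2-u}.$$
Since $R_1>0$ forces $u>0$, squaring the target inequality \eqref{Impineq} and cancelling the common factor $u$ turns it into
$$\frac{k_1}{2k-k_1 u}\geq \Big(\frac{k_1+1}{k+1}\Big)^2\frac{1}{2-u}.$$

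The second step is to clear denominators -- both are positive, since $0<u\leq 1$ (because $R_1\leq\frac{\pi}{2}$ gives $\cos R_1\geq 0$) and $k_1<k$ -- and then collect terms. Cross-multiplication yields $k_1(k+1)^2(2-u)\geq (k_1+1)^2(2k-k_1u)$. Grouping the $u$-free and $u$-linear parts and invoking the two elementary factorizations $(k+1)^2-(k_1+1)^2=k_2(k+k_1+2)$ and $k_1(k+1)^2-k(k_1+1)^2=k_2(k_1k-1)$, the whole inequality collapses to
$$2k_2(k_1k-1)\geq u\,k_1 k_2(k+k_1+2).$$
Dividing through by $k_2>0$ reduces the problem to the single clean inequality
$$2(k_1k-1)\geq u\,k_1(k+k_1+2).$$

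Now the hypothesis $R_1\leq\frac{\pi}{2}$ plays its decisive role a second time: it bounds $u\leq 1$, and since the right-hand side above is increasing in $u$, it suffices to verify the inequality at the worst case $u=1$. There, using $k-k_1=k_2$, it simplifies to $k_1(k_2-2)\geq 2$, which is exactly what the two alternative hypotheses supply: under (1) $k_2\geq 4$ one has $k_1(k_2-2)\geq 1\cdot 2=2$, and under (2) $k_2\geq 3,\ k_1\geq 2$ one has $k_1(k_2-2)\geq 2\cdot 1=2$.

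I expect the main obstacle to be bookkeeping rather than conceptual: carrying out the cross-multiplication and, crucially, spotting the two factorizations that make the factor $k_2$ cancel and transform an unwieldy expression into the transparent condition $k_1(k_2-2)\geq 2$. The one genuinely structural observation is that the constraint $R_1\leq\frac{\pi}{2}$ is precisely what caps $u$ at $1$, thereby reducing an inequality in a continuous parameter to its single extremal case; without this cap $u$ could approach $2$ and the estimate would break down, which also explains why the stronger ratio $\frac{k_1+1}{k_1+k_2+1}$ (improving Corollary \ref{Dcoro}) is unattainable in general and needs both the angular restriction and the dimension conditions.
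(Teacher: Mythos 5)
Your proof is correct, and it is essentially the paper's argument in different bookkeeping: the paper uses $\cos R_1\ge 0$ (your cap $u\le 1$, coming from $R_1\le\frac{\pi}{2}$) to get $k^2\sin^2 R\le k_1(k_1+2k_2)\sin^2 R_1$, feeds this into $\tan\frac{R}{2}=\frac{1-\cos R}{\sin R}$ to obtain $\tan\frac{R}{2}\ge\sqrt{\frac{k_1}{k_1+2k_2}}\,\tan\frac{R_1}{2}$, and then checks that the dimension hypotheses give $\sqrt{\frac{k_1}{k_1+2k_2}}\ge\frac{k_1+1}{k+1}$. That last comparison, after squaring and clearing denominators, factors as $k_2\bigl(k_1(k_2-2)-2\bigr)\ge 0$, i.e.\ exactly your condition $k_1(k_2-2)\ge 2$, so the two routes — your half-angle identity $\tan^2\frac{\theta}{2}=\frac{1-\cos\theta}{1+\cos\theta}$ with cross-multiplication versus the paper's $\sin$-comparison — invoke the hypotheses at the same points and reduce to the identical arithmetic inequality.
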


                        \begin{proof}
                        With $k=k_1+k_2$,
                        the assumption tells that
                         $
                          \cos^2 R
                               =
                                      \left(
                                              \frac{k_2+k_1\cos R_1
                                              }
                                              {k}
                                              \right)^2
                                                         $.
                        Hence,
                              \begin{eqnarray}
                                    k^2\sin^2 R
                                         &  
                                         =
                                         &
                                         k^2-k_2^2
                                                                        -2k_1k_2\cos R_1
                                                                        -k_1^2\cos^2 R_1
                   \nonumber\\
                                           &
                                           =
                                           &                             
                                      k^2-k_2^2-k_1^2
                                                                        -2k_1k_2\cos R_1
                                                                        +k_1^2\sin^2 R_1
                       \nonumber\\
                                           &
                                           =
                                           &
                                           2k_1k_2(1-\cos R_1)   
                                                     +k_1^2\sin^2 R_1
                   \nonumber\\
             \left(\text{by  assumption } 0<R_1\leq \frac{\pi}{2}\right)            \ \ \ \ \ \             &
                                     \leq
                                     &
                                      2k_1k_2(1-\cos R_1)(1+\cos R_1)   
                                                     +k_1^2\sin^2 R_1
                        \nonumber\\
                                   &
                                   =
                                   &
                                        k_1 (2k_2+k_1)
                                         \sin^2 R_1   
                                                                                                        \nonumber
                           \end{eqnarray}
                       and thus   
                 $
                                   \sin R
                                             \leq 
                                                          \frac{
                                                          \sqrt{k_1(2k_2+k_1)}
                                                          }
                                                          { k}
                                                          \sin R_1.
              $

                  Following the argument of Corollary \ref{Dcoro} but now with 
                             $
                                   \sin R
                                             \leq 
                                                          \frac{
                                                          \sqrt{k_1(2k_2+k_1)}
                                                          }
                                                          { k}
                                                          \sin R_1
              $,
                          in order for
                          $$
                                           \tan \frac{R}{2}
                                   \geq
                                            \frac
                                                                                                      { k_1}  {
                                                          \sqrt{k_1(2k_2+k_1)}
                                                          }
                                         \tan \frac{R_1}{2}
                                     \geq
                                       \frac{k_1+1}{k+1}
                                          \tan \frac{R_1}{2},
                                           $$
                                it requires either $k_2\geq 4$ or            
                                $k_2\geq 3, k_1\geq 2$.
                        \end{proof}

                           \begin{rem}\label{Mcosineq}
                           Similarly as in Remark \ref{indRk},
                            under the assumption that 
                            $\cos R(L_1\times\cdots\times L_n)=1-\la_1^2(1-\cos R_1)$ and   $\sum_{i=2}^n k_i\geq 4$ and $R_1\leq \frac{\pi}{2}$,
                            one can extend the result to the situation of multiple inputs 
                                                         \begin{eqnarray}
                                           \tan \frac{R(L_1\times\cdots\times L_n)}{2}
                                           \geq 
                                          \frac{k_1+1}{\sum_{i=1}^n k_i+1}
                                           \tan \frac{R_1}{2}.
                          \label{ImpM}
                           \end{eqnarray}
                     Here we think of $L_1\times \cdots \times L_n$ as $L_1\times (L_2\times\cdots \times L_n)$.
                           \end{rem}

                           \begin{rem}
Note that by Lemma \ref{tcless90} every minimal product type $L_1$ automatically satisfies  the requirement $R_1\leq \frac{\pi}{2}$.
                           \end{rem}

                       {\ }    
                           
                           With the preparation of this subsection, we are ready to derive more configuration results in the next subsection.
                           
                            \subsection{More configuration results.}\label{S43}
                            For better exposition,
                            let us  introduce some useful concepts.

                           % \subsubsection{a}\label{ss1}
                            \begin{defn}
                           Let $\alpha$ be the supremum norm square of  second fundamental form with respect to unit normals for closed minimal submanifold $L$  of dimension $k$ in sphere.
                           We define the quantity $\frac{\alpha^2}{k}$ to be the slope of $L$, denoted by {\tt s}$(L)$.
                           Say $L$ (and also C(L)) is of class (a), (b), (c) if  {\tt s}$(L)<1$,  {\tt s}$(L)=1$ and  {\tt s}$(L)>1$ respectively.
                           The union of classes (a) and (b) is called class (I), and that for (b) and (c) is called class (II).
                            \end{defn}

                           There are plentiful examples for each type.
                                                      For some neat ones,
                           we may consider isoparametric foliations of spheres.
                           Here we mention some brief information about  their {\tt s}$(L)$.
                           Interested readers may  
                           check  \cite{TZ} and references therein for further knowledge.
                           
                           A closed embedded  hypersurface in a unit sphere
                           is called isoparametric if it has constant principal curvatures.
                           This property is inherited by its parallel hypersurfaces.
                           In such way it decides an isoparametric foliation with two focal submanifolds $M_+$ and $M_-$.
                           Let {\tt g} be the number of distinct principal curvatures, $\cot \theta_\beta$ $(\beta=1,\cdots, g; 0<\th_1<\cdots<\th_{\tt g}<\pi)$ of a regular isoparametric leaf and $m_\beta$ the multiplicity of $\cos \th_\beta$.
                           M\"unzner showed that {\tt g} must be 1,2,3,4 or 6 and $m_\beta=m_{\beta+2}$
                           with indices mod {\tt g}.
                           Distributions of $({\tt g}, m_1, m_2)$ are completely clear.
                           The slopes for the unique minimal isoparametric hypersurface and two focal submanifolds in an isomparametric foliation of sphere 
                           only depend on the distribution $({\tt g}, m_1, m_2)$ regardless of a  complete classification of isoparametric foliations of spheres.
                           
                           {\ }\\
                           {\bf Example 1. }
                           Let $M$ be the  minimal isoparametric hypersurface in an isomparametric foliation of sphere with {\tt g}.
                           Then {\tt s}$(M)={\tt g-1}$.
                           
                           Here {\tt g=1} for totally geodesic hyperspheres with normal radius $\pi$, 
                          which belong to class (a);
                                         {\tt g=2} for Clifford type minimal hypersurfaces,
                                         which belong to class (b);
                           {\tt g=3} has four examples with $m_1=m_2=m_3=1,2,4,8$,
                           which belong to class (c);
                            for {\tt g=6} there are only two values $m_1=m_2=1,2$ and corresponding hypersurfaces are of class (c).
                            When {\tt g=4}
                            there are infinitely many geometrically distinct examples of minimal hypersurfaces and all of them are of class (c).
                           
                           {\ }
                           \\
                              {\bf Example 2. } Focal submanifolds.
                               For any isoparametric foliation with {\tt g=4},
                              note that $M_+$ has dimension $m_1+2m_2$ and $M_-$ has dimension $2m_1+m_2$ 
                              with $\alpha_+^2=2m_2$ and $\alpha_-^2=2m_1$ respectively.
                              Hence,  both are  of class (a). 
                           
                           For {\tt g=3} and $m_1=m_2=m_3=1,2,4$ or $8$, 
                          each focal submanifold $M_\pm$  has {\tt s}$(M_\pm)=\frac{1}{3}$, and thus they are of class (a);
                            for {\tt g=6} and $m_1=m_2=1$ or $2$, 
                          each focal submanifold $M_\pm$  has {\tt s}$(M_\pm)=\frac{4}{3}$, and thus they are of class (c).
                           
                           Note that each focal submanifolds has normal radius exactly $\frac{2\pi}{\tt g}$.

                           {\ }
                           
                           Here are some simple properties of these classes, according to Lemma \ref{alpha}.

                           \begin{prop}\label{p1}
                           Minimal product $L_1\times L_2$ of two minimal submanifolds $L_1, L_2$ in spheres  belongs to either class of (b) or (c).
                           \end{prop}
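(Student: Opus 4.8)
The plan is to deduce the statement immediately from Lemma \ref{alpha}, which already records the supremum norm square $\alpha^2$ of the second fundamental form of the product. First I would set $k=k_1+k_2$ and feed Lemma \ref{alpha} into the definition of slope, obtaining
\[
\mathtt{s}(L_1\times L_2)=\frac{\alpha^2}{k}=\max\left\{\frac{\alpha_1^2}{k_1},\,\frac{\alpha_2^2}{k_2},\,1\right\}=\max\left\{\mathtt{s}(L_1),\,\mathtt{s}(L_2),\,1\right\},
\]
where the factor $k_1+k_2$ produced by Lemma \ref{alpha} cancels precisely the dimension $k$ in the denominator.

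The decisive point is then purely formal: the maximum on the right always contains the constant $1$, so $\mathtt{s}(L_1\times L_2)\geq 1$ regardless of how small $\mathtt{s}(L_1)$ and $\mathtt{s}(L_2)$ may be. Since class (a) demands slope strictly below $1$, this rules class (a) out, and only class (b) (slope $=1$) or class (c) (slope $>1$) can occur, which is exactly the claim.

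For completeness I would indicate the geometric origin of the crucial $1$. It comes from the distinguished normal $\eta_0=(\la_2 x_1,-\la_1 x_2)$, whose shape operator $A_{\eta_0}=\mathrm{diag}\left(-\frac{\la_2}{\la_1}I_{k_1},\,\frac{\la_1}{\la_2}I_{k_2}\right)$ has squared norm $k_1\frac{\la_2^2}{\la_1^2}+k_2\frac{\la_1^2}{\la_2^2}=k_2+k_1=k$ once $\la_i^2=k_i/k$ is substituted, contributing slope exactly $1$ on its own. Because this Clifford-type direction is present in every minimal product, class (a) is structurally unreachable. As the whole argument is a one-line corollary of Lemma \ref{alpha}, I anticipate no genuine obstacle; the only thing demanding care is the definitional bookkeeping, namely recognizing that it is precisely the $\max$-with-$1$ in Lemma \ref{alpha} that forbids class (a).
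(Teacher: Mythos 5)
Your proof is correct and is essentially the paper's own argument: the paper states Proposition \ref{p1} as an immediate consequence of Lemma \ref{alpha}, exactly as you do, since $\mathtt{s}(L_1\times L_2)=\max\{\mathtt{s}(L_1),\mathtt{s}(L_2),1\}\geq 1$ rules out class (a). Your added observation that the forced value $1$ comes from the Clifford-type normal $\eta_0$ is a nice (and accurate) gloss on why the $\max$ in Lemma \ref{alpha} contains the constant $1$, but it is not a different route.
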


                           \begin{prop}
                           Given $L_1, L_2$ of class (I).
                          Then $L_1\times L_2$ belongs to class (b).
                           \end{prop}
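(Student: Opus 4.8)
The plan is to read the slope of the minimal product directly off Lemma~\ref{alpha}, since everything here is controlled by the single formula for $\alpha^2$. Writing $k=k_1+k_2$ for the dimension of $L_1\times L_2$ and $\alpha^2$ for the supremum norm square of its second fundamental form, Lemma~\ref{alpha} gives
\[
\mathtt{s}(L_1\times L_2)=\frac{\alpha^2}{k_1+k_2}=\max\left\{\frac{\alpha_1^2}{k_1},\,\frac{\alpha_2^2}{k_2},\,1\right\}=\max\{\mathtt{s}(L_1),\,\mathtt{s}(L_2),\,1\}.
\]
Thus the first step I would carry out is to record that the slope of any minimal product equals the larger of $1$ and the slopes of its two factors; this is exactly the observation already underlying Proposition~\ref{p1}, where the constant $1$ inside the maximum forces $\mathtt{s}(L_1\times L_2)\geq 1$ and hence class (b) or (c).

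Next I would feed in the hypothesis. Suppose $L_1$ and $L_2$ both lie in class (I), i.e.\ $\mathtt{s}(L_1)\leq 1$ and $\mathtt{s}(L_2)\leq 1$. Then the two factor entries in the maximum above are each at most $1$, while the constant entry $1$ keeps the maximum at least $1$; combining the two bounds pins the value down to exactly $\mathtt{s}(L_1\times L_2)=1$, which is precisely the defining condition for class (b). This completes the argument.

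There is essentially no genuine obstacle: the statement is an immediate arithmetic consequence of Lemma~\ref{alpha}, and the only point requiring a moment's care is that it is the presence of the constant $1$ inside the maximum that rules out a strictly smaller value and forces the slope to land exactly at $1$ once both factor slopes are capped by $1$. Finally I would append a short remark that the identity $\mathtt{s}(L_1\times L_2)=\max\{\mathtt{s}(L_1),\mathtt{s}(L_2),1\}$ extends by induction to any finite minimal product, so that a minimal product of class-(I) submanifolds is always of class (b); this parallels the inductive passage already used for the normal radius in Remark~\ref{indRk}.
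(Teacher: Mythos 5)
Your proof is correct and follows essentially the same route as the paper: the propositions in that subsection are stated as immediate consequences of Lemma~\ref{alpha}, and your identity $\mathtt{s}(L_1\times L_2)=\max\{\mathtt{s}(L_1),\mathtt{s}(L_2),1\}$ is exactly the intended reading of that lemma, from which both factor slopes being at most $1$ forces the product slope to equal $1$. The concluding inductive remark is also consistent with the paper's treatment of multiple factors.
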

                           
                            \begin{prop}
                          If either $L_1$ or $L_2$ is of class (c),
                          then $L_1\times L_2$ belongs to class (c).
                          If either $L_1$ or $L_2$ is of class (II),
                          then $L_1\times L_2$ belongs to class (II).
                           \end{prop}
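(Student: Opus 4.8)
The plan is to reduce both assertions to a single slope identity for minimal products. Keeping the notation of Lemma~\ref{alpha}, let $\alpha_1^2,\alpha_2^2,\alpha^2$ denote the supremum norm squares of the second fundamental forms of $L_1,L_2$ and $L_1\times L_2$. Dividing the formula of Lemma~\ref{alpha} by $k_1+k_2$ yields
$$
\frac{\alpha^2}{k_1+k_2}=\max\left\{\frac{\alpha_1^2}{k_1},\,\frac{\alpha_2^2}{k_2},\,1\right\},
$$
whose left-hand side is exactly the slope {\tt s}$(L_1\times L_2)$ of the product. Thus the slope of a minimal product equals the maximum of the slopes of its two factors and $1$. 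This identity is the only nontrivial ingredient, and it is immediate from Lemma~\ref{alpha}; so the first thing I would do is record it explicitly.

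With the identity in hand, both claims become elementary comparisons of real numbers handled by symmetry of the product. For the first, assume without loss of generality that $L_1$ is of class (c), i.e.\ {\tt s}$(L_1)>1$; then {\tt s}$(L_1\times L_2)\geq$ {\tt s}$(L_1)>1$, so $L_1\times L_2$ is of class (c). For the second, assume $L_1$ is of class (II), i.e.\ {\tt s}$(L_1)\geq 1$; then {\tt s}$(L_1\times L_2)\geq$ {\tt s}$(L_1)\geq 1$, so $L_1\times L_2$ is of class (II). Each conclusion follows from monotonicity of the maximum together with the fact that one factor's slope already appears as an entry.

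There is essentially no hard part here: the difficulty is purely bookkeeping of the definitions. I would note in particular that the second assertion in fact holds unconditionally, since the entry $1$ inside the maximum forces {\tt s}$(L_1\times L_2)\geq 1$ irrespective of any hypothesis on the factors; this is precisely Proposition~\ref{p1}, which states that every minimal product lies in class (b) or (c), hence in class (II). Accordingly, the hypotheses in the proposition serve only to exhibit that membership in class (c), respectively class (II), is inherited by products in the expected monotone fashion.
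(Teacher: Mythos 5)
Your proof is correct and follows exactly the route the paper intends: these propositions are stated as immediate consequences of Lemma~\ref{alpha}, and your slope identity $\mathtt{s}(L_1\times L_2)=\max\{\mathtt{s}(L_1),\,\mathtt{s}(L_2),\,1\}$ is just that lemma divided by $k_1+k_2$. Your added observation that the class (II) assertion holds unconditionally (being Proposition~\ref{p1}) is also accurate.
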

                           
                           {\ }
                           \\
                               {\bf Example 3. } 
                               Lawlor classified all area-minimizing cones over links which are minimal products of spheres.
                               All these links are  of class (b).
                            
                            {\ }\\
                               {\bf Example 4. }
                               Our Theorem \ref{main} can manufacture a constellation  of elements of class (b) and class (c). 
                               For example, given finitely many closed minimal submanifolds $\{L_i\}$ in spheres,
                               whenever they are not much curved, i.e., $\alpha_i^2\leq k_i$,
                               our Theorem \ref{main} can apply and produce lots of members in class (b).
                               
                               Moreover, 
                               to the countably many  $\{L_\ell\}_{\ell \in\mathscr A}$ of focal submanifolds with ${\tt g}=4$ in Example 2 above
                               which satisfy \eqref{infmany},
                               we can apply Remark \ref{full}.
                      
                      {\ }
                        
                          In fact as an application we can cover the main result of \cite{TZ} based on the structural discovery found here.
                          For doing so let us recall the following basic property.
                               
                               \begin{prop}\label{piso}
                               Let $L_1^{k_1}$ be either a minimal isoparametric hypersurface or a focal submanifold
                               and $R_1$ its normal radius.
                               Then it follows that 
             \begin{equation}\label{lbcontrol}
             k_1(1-\cos R_1)> 0.8
                               \text{
                              \ \ \  and \ \ \ }
                               \alpha_1^2\leq 5k_1.
                 \end{equation}               
                               \end{prop}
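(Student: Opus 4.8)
The plan is to reduce the statement to a finite verification over M\"unzner's list $g\in\{1,2,3,4,6\}$ of possible numbers of distinct principal curvatures. As recalled in Examples 1 and 2, the dimension $k_1$, the squared second fundamental form norm $\alpha_1^2$, and the normal radius $R_1$ of a minimal isoparametric hypersurface or of a focal submanifold all depend only on the distribution $(g,m_1,m_2)$, so only finitely many ``shapes'' must be examined. Concretely I will use three pieces of data. First, the slopes: a minimal isoparametric hypersurface $M$ has slope $g-1$ (Example 1), i.e.\ $\alpha_1^2=(g-1)k_1$, while every focal submanifold has slope at most $\frac43$ (Example 2). Second, the dimensions: with $k_1=\sum_{\beta=1}^g m_\beta$ for the hypersurface and $\dim M_\pm=k_1-m_{1,2}$ for the focal submanifolds, the constraint $m_i\geq 1$ forces $k_1\geq g$ in the hypersurface case and the crude lower bounds $\dim M_\pm\geq 1,2,3,5$ for $g=2,3,4,6$. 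Third, the normal radii: the focal submanifolds have $R_1=\frac{2\pi}{g}$ (already recorded above), and I will establish that the minimal isoparametric hypersurface has $R_1=\frac{\pi}{g}$.

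For the bound $\alpha_1^2\leq 5k_1$ the argument is immediate. For a minimal isoparametric hypersurface $\alpha_1^2=(g-1)k_1\leq 5k_1$ since $g\leq 6$, with equality exactly when $g=6$; for a focal submanifold the slope never exceeds $\frac43$, so $\alpha_1^2\leq \frac43 k_1<5k_1$. Thus this half is a one-line consequence of the tabulated slopes.

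The substantive half is $k_1(1-\cos R_1)>0.8$, which I will prove by minimizing the quantity $k_1(1-\cos R_1)$ over all admissible cases. Using $R_1=\frac{\pi}{g}$ for the hypersurface and $R_1=\frac{2\pi}{g}$ for the focal submanifolds, together with the minimal dimensions coming from $m_i\geq 1$, the value is smallest when $g$ is largest and $k_1$ smallest. A direct case-by-case evaluation shows that every focal-submanifold case gives at least $2$, and every hypersurface case with $g\leq 4$ gives at least $4-2\sqrt2\approx 1.17$; the genuinely tight case is the minimal isoparametric hypersurface with $g=6$ and $m_1=m_2=1$, for which $k_1=6$, $R_1=\frac{\pi}{6}$, and
$$
k_1(1-\cos R_1)=6\left(1-\cos\tfrac{\pi}{6}\right)=6-3\sqrt3\approx 0.8038>0.8.
$$
This is precisely the case that pins down the constant $0.8$ in the statement.

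The main obstacle is the determination of the normal radius of the minimal isoparametric hypersurface. I plan to obtain $R_1=\frac{\pi}{g}$ from M\"unzner's description of the foliation: along any unit-speed normal geodesic the two focal submanifolds are met alternately at equal spacing $\frac{\pi}{g}$, and the minimal leaf sits exactly midway between consecutive focal submanifolds; hence the geodesic, after passing the nearest focal submanifold at distance $\frac{\pi}{2g}$, first returns to the (unique) minimal leaf at distance $2\cdot\frac{\pi}{2g}=\frac{\pi}{g}$, giving $R_1=\frac{\pi}{g}$. The second delicate point is purely numerical: the inequality is extremely tight for $g=6$, $m=1$, so one must keep $6-3\sqrt3$ exact rather than round it, and confirm it lies strictly above $0.8$. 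All remaining cases carry comfortable margins and require only the coarse bounds $k_1\geq g$, $m_i\geq 1$, and Abresch's restriction $m\in\{1,2\}$ for $g=6$.
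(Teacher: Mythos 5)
Your overall strategy --- a finite verification over M\"unzner's list $(g,m_1,m_2)$, the slope bound $\alpha_1^2\leq 5k_1$ read off from $g\leq 6$ and the focal slopes, and the $g=6$, $m_1=m_2=1$ hypersurface as the tight case giving $6-3\sqrt3>0.8$ --- is the same as the paper's, and your treatment of the focal submanifolds (where $R_1=\frac{2\pi}{g}$ is correct) goes through. However, your key lemma is false: the normal radius of the minimal isoparametric hypersurface is \emph{not} $\frac{\pi}{g}$ in general. Your midpoint argument tacitly assumes that the minimal leaf lies halfway between the two focal submanifolds; that holds only when $m_1=m_2$ (hence automatically for $g=1,3,6$), but for $g=2,4$ with $m_1\neq m_2$ the minimal leaf sits at unequal distances $d_{\pm}$ from $M_{\pm}$ (its position is determined by the vanishing of the mean curvature, which depends on the multiplicities), and the normal radius is $2\min\{d_+,d_-\}<\frac{\pi}{g}$. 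The correct values, which the paper quotes from (56) of \cite{TZ}, are $\cos R_1=1-\frac{2\min\{m_1,m_2\}}{m_1+m_2}$ for $g=2$ and $\cos R_1=\sqrt{1-\frac{\min\{m_1,m_2\}}{m_1+m_2}}$ for $g=4$.

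This error is not harmless, because $\frac{\pi}{g}$ \emph{over}estimates $R_1$ and hence overestimates $k_1(1-\cos R_1)$: you are inserting an upper bound exactly where the argument needs a lower bound, so the $g=2,4$ hypersurface cases are never actually verified. Indeed, your intermediate claim that every hypersurface case with $g\leq 4$ gives at least $4-2\sqrt2\approx1.17$ is false: for $g=4$ with $(m_1,m_2)=(1,k)$ (realized for every $k\geq1$, e.g.\ by the OT--FKM construction), one has $k_1=2(k+1)$ and
\begin{equation*}
k_1(1-\cos R_1)\;=\;2(k+1)\left(1-\sqrt{\tfrac{k}{k+1}}\right)\;=\;\frac{2}{1+\sqrt{k/(k+1)}}\;\longrightarrow\;1\qquad(k\to\infty),
\end{equation*}
which drops below $4-2\sqrt2$ already for $k=2$. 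The proposition itself survives because the correct formulas still give $k_1(1-\cos R_1)=2\min\{m_1,m_2\}\geq2$ for $g=2$ and $k_1(1-\cos R_1)>\min\{m_1,m_2\}\geq1$ for $g=4$ --- this is exactly how the paper argues --- but to repair your proof you must replace $R_1=\frac{\pi}{g}$ by the multiplicity-dependent radii in those two cases.
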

                               
                               \begin{proof}
                               When $L_1$ is a minimal isoparametric hypersurface of a sphere with ${\tt g}$,
                               the value of $\cos R_1$ is clear 
                               and the author computed that in (56) of \cite{TZ}
                               :
                                   \begin{equation}\label{cosphi}
         \,
   \cos R_1
   =\begin{cases}
  -1 &\ {\tt g}=1,\\
1-\frac{2\min\{m_1,\;m_2\}}{m_1+m_2}  &\ {\tt g}=2,\\
 \frac{1}{2} &\ {\tt g}=3,\\
 \sqrt{1-\frac{\min\{m_1,\;m_2\}}{m_1+m_2}} &\ {\tt g}=4,\\
\frac{\sqrt 3}{2} &\ {\tt g}=6.
   \end{cases}             
\end{equation}
                             Note that  we have now $k_1=m_1+m_2$ for ${\tt g}=2$;
                                                                     $k_1=3m_1$ for ${\tt g}=3$;
                                                                     $k_1=2(m_1+m_2)$ for ${\tt g}=4$;
                                                                     $k_1=6m_1$ for ${\tt g}=6$.
                                    So, 
                                                                 \begin{equation}\label{cosphi}
         \,
  k_1(1- \cos R_1)
   =\begin{cases}
  2k_1 &\ {\tt g}=1,\\
2\min\{m_1,\;m_2\}  &\ {\tt g}=2,\\
 \frac{3}{2}m_1 &\ {\tt g}=3,\\
 2(m_1+m_2)\left(1-\sqrt{1-\frac{\min\{m_1,\;m_2\}}{m_1+m_2}}\right) &\ {\tt g}=4,\\
6m_1\left(1-\frac{\sqrt 3}{2}\right) &\ {\tt g}=6.
   \end{cases}             
\end{equation}
                            The second last is larger than $\min\{m_1,\;m_2\}$
                            and the last is bigger than $0.8038m_1$.
                            
                      When $L_1$ is a focal submanifold of some isoparametric foliation of sphere,
                      as already mentioned  $R_1=\frac{2\pi}{\tt g}$.
                      Note that 
                      we have 
                      $k_1=0$ for ${\tt g}=1$ (not in our consideration with positive $k_1$);
                      $k_1=m_1$ or $m_2$ for ${\tt g}=2$;
                      $k_1=2m_1$ for ${\tt g}=3$;
                      $k_1=2m_1+m_2$ or $m_1+2m_2$ for ${\tt g}=4$;
                                                  $k_1=5m_1$  for ${\tt g}=6$.
                                                  So, 
                                                  $k_1(1-\cos R_1)\geq 1$.
                                                  
                                                  As a result, the quantity $k_1(1-\cos R_1)\geq 0.8$ as stated.
                                                 
                                                  As for $\alpha_1^2$,
                                                  it was also computed in \cite{TZ} 
                                                  with the conclusion $\alpha_1^2=({\tt g}-1)k_1$ for the minimal isoparametric hypersurfaces;
                                                  $\alpha_1^2=0$ for focal submanifolds with ${\tt g}=2$;
                                                   $\alpha_1^2=\frac{k_1}{3}$ for focal submanifolds with ${\tt g}=3$;
                                                  $\alpha_1^2=2m_2$ or $2m_1 \leq k_1-1$ for focal submanifolds with ${\tt g}=4$;  
                                                   $\alpha_1^2=\frac{4k_1}{3}$ for focal submanifolds with ${\tt g}=6$.
                                                   Hence,  $\alpha_1^2\leq 5k_1$.
                              \end{proof}

            \begin{thm}\label{TZmain}
            Let  $\{L_\ell\}_{\ell \in\mathscr A}$ in Remark \ref{full} be the collection of all minimal isoparametric hypersurfaces and focal submanifolds of spheres.
            Then cones, of dimension no less than $37$, over minimal products among the collection are all area-minimizing.
            \end{thm}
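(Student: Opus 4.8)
The plan is to verify Lawlor's Type-c criterion, i.e.\ to show $\theta_c(k+1,\alpha)\le \tfrac{R}{2}$ for every minimal product $L$ of link-dimension $k$ (cone dimension $k+1\ge 37$) built from the collection, where $\alpha$ and $R$ denote the supremum norm of the second fundamental form and the normal radius of $L$. Everything reduces to feeding the two uniform constants of Proposition \ref{piso}---slope $\alpha_\ell^2/k_\ell\le 5$ and $k_\ell(1-\cos R_\ell)>0.8$ for each building block---into the apparatus already developed in the proof of Theorem \ref{main} and in Remark \ref{full}. In particular, Proposition \ref{piso} shows at once that the collection satisfies the hypotheses \eqref{infmany}, since $\sup_\ell \alpha_\ell^2/k_\ell\le 5<\infty$ and $\inf_\ell k_\ell(1-\cos R_\ell)\ge 0.8>0$; hence Remark \ref{full} applies and already guarantees area-minimality once the dimension is large enough. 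The only remaining job is to make ``large enough'' explicit and confirm that it is $37$.

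For the curvature side, iterating Lemma \ref{alpha} gives $\alpha^2=k\max\{1,\max_\ell \alpha_\ell^2/k_\ell\}\le 5k$. Plugging the bound $\max\{1,\alpha_\ell^2/k_\ell\}\le 5$ into the right-hand side of \eqref{Rec} and using that $\theta_c(12,\cdot)$ is increasing in its second argument bounds $\tan\theta_c(k+1,\alpha)$ above by $\tfrac{12}{k+1}\tan\theta_c\!\left(12,\tfrac{12}{k+1}\sqrt{5k}\right)$. For the radius side, the normal-radius formula \eqref{l2} together with $\min_\ell k_\ell(1-\cos R_\ell)>0.8$ yields $\cos R<1-\tfrac{0.8}{k}$, and since $R\le\tfrac{\pi}{2}$ by Lemma \ref{tcless90} keeps $1+\cos R<2$, we get
$$\tan\tfrac{R}{2}=\sqrt{\tfrac{1-\cos R}{1+\cos R}}>\sqrt{\tfrac{0.4}{k}}.$$
It therefore suffices to establish the one-variable inequality
$$\frac{12}{k+1}\,\tan\theta_c\!\left(12,\frac{12}{k+1}\sqrt{5k}\right)<\sqrt{\frac{0.4}{k}}\qquad (k+1\ge 37).$$

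Finally I would carry out this comparison. Because the inner argument $\tfrac{12}{k+1}\sqrt{5k}$ decreases to $0$ as $k\to\infty$ while $\theta_c(12,\cdot)$ is read off Lawlor's table on page 21 of \cite{Law} (the relevant entry stabilizing at $7.97^\circ$), the left-hand side behaves like $C/k$ whereas the right-hand side decays only like $k^{-1/2}$; the inequality is thus comfortable for large $k$ and the binding case is the smallest admissible dimension. A direct evaluation then pins the crossover at cone dimension $37$, and area-minimality follows from Lawlor's criterion, Theorem \ref{LC}. The main obstacle is precisely this last numerical step: one must confirm that $\theta_c\!\left(12,\tfrac{12}{k+1}\sqrt{5k}\right)$ genuinely exists and remains small enough at the extremal dimension (where its second argument is largest, $\approx 4.3$), and that the worst case---slope $5$ and the constant $0.8$ attained simultaneously by the $\mathtt{g}=6$ minimal isoparametric hypersurface with $m_1=m_2=1$---does not push the threshold beyond $37$.
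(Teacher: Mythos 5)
Your proposal is correct and follows essentially the same route as the paper's own proof: it feeds the uniform bounds $\alpha^2\le 5k$ and $k_\ell(1-\cos R_\ell)>0.8$ of Proposition \ref{piso} into Lemma \ref{tc} (reducing to $\theta_c\big(12,\tfrac{12}{k+1}\sqrt{5k}\big)$) and the normal-radius formula \eqref{l2}, then compares the two sides exactly as the paper does. The numerical step you flag as the main obstacle is precisely what the paper settles by Lawlor's table: for $k\ge 36$ the inner argument stays below $\sqrt{19}$, where $\theta_c(12,\sqrt{19})<11.23^\circ$, hence $\tan\theta_c(k+1,\alpha)<2.383/(k+1)$, and the resulting comparison $14.2\,k<(k+1)^2$ holds for all $k\ge 13$ --- so the threshold $37$ arises from the range in which that table entry applies, not from a crossover of the final inequality itself.
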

            
            \begin{proof}
            Use $L$ to denote the minimal product of dimension $k> 11$ with normal radius $R$.
            As
                                     \begin{eqnarray}                             
                                                                                                     \tan \theta_c
                                                   \Big(k+1, \alpha
                                                   \Big)
&\leq&
                                                   \frac{ 12}{k+1}\tan \theta_c
                                                            \left(
                                                                           12,
                                                                               \, 
                                                                                 \frac{12}{k+1}
                                                                       \sqrt{5k
                                                                       }
                                                                                 \right)
                                                 \nonumber
                                                                                 \\
                           \text{ when $k\geq 36$}    \ \ \ \ \ \ \ \ \ \ \ \ \ \ \ \ \ \  \ \ \ \ \  \ \ \ \
                                            & <&
                                                                                     \frac{ 12}{k+1}
                                                                                     \tan \theta_c
                                                            \left(
                                                                           12,
                                                                               \, 
                                                                                \sqrt {19}
                                                                                 \right)
                              \label{36}                  \\
                                                                          &       <&
                                                                                     \frac{ 12    }{k+1}
                                                                                     \tan 11.23^\circ         
\nonumber
\\
                                   & <&
                                      \frac{ 2.383        }{k+1}
                                      \nonumber
             \end{eqnarray}
             and
             by Proposition \ref{piso}
             $$
                  R>
                                                                       \arccos
                                                                       \left(
                                                                       1-\frac{0.8}{k}
                                                                       \right)
                                                                        =
                               \sqrt{  \frac{1.6}{ k} } + \text{ positive terms}
                                  >
                                \sqrt{  \frac{1.6}{ k} }\, ,
             $$
             now let us  solve the sufficient inequality to apply Lawlor's criterion
             $$
             \th_0(C(L))
             <
             \th_c(k+1, \alpha)
             <
             \tan \theta_c
                                                   \Big(k+1, \alpha
                                                   \Big)
                                                   <
                                                   \frac{2.383 }{k+1}
                                                   <\frac{\sqrt{1.6}}{2\sqrt {k}}
                                                   <\frac{R}{2}
                                                  \,  .
             $$
             It suffices to have 
             $
             14.2*k<(k+1)^2
             $
            and this is always true  for $k\geq 13$.
            Therefore, the theorem gets proved by the requirement \eqref{36}.
            \end{proof}
                                {\ }
                               
                  % \subsubsection{ A useful lemma on existence of vanishing angle. }            
                   Now let us  prove a useful lemma on the existence of vanishing angle. 
                               \begin{lem}[Existence of $\th_c$]\label{ThcE}
                               Suppose $L_1, L_2$ span Type-c cones 
                           with  $k_1+k_2\geq 11$.
                           Then, 
                           for $\alpha=\alpha (L_1\times L_2)$,
                           the vanishing angle $\th_c(k_1+k_2+1, \alpha)$
                           exists.
                               \end{lem}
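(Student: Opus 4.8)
The plan is to extract $\alpha$ from Lemma \ref{alpha} and then to produce the existence of $\theta_c(k_1+k_2+1,\alpha)$ by transporting, through the scaling inequality of Lemma \ref{tc}, an existence statement already available to us. Set $k=k_1+k_2$ and $s=\max\{\alpha_1^2/k_1,\ \alpha_2^2/k_2,\ 1\}$, so that Lemma \ref{alpha} gives $\alpha=\sqrt{ks}$ and the goal becomes the existence of $\theta_c(k+1,\sqrt{ks})$. Two facts will be used throughout. First, the ODE defining $\theta_c$ replaces the infimum in \eqref{ineq} by the control $(1-\beta t)e^{\beta t}$, which is decreasing in $\beta$; hence by a comparison argument the solution curve blows up no later when $\beta$ is decreased, so that $\theta_c(m,\beta)$ is non-decreasing in $\beta$ and existence of $\theta_c(m,\beta)$ forces existence of $\theta_c(m,\beta')$ for every $0\le\beta'\le\beta$. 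Second, the Type-c hypotheses supply exactly the existence of $\theta_c(k_1+1,\alpha_1)$ and $\theta_c(k_2+1,\alpha_2)$. After relabelling we may assume $\alpha_1^2/k_1\ge\alpha_2^2/k_2$, so $s=\max\{\alpha_1^2/k_1,\,1\}$, and we split according to which term wins.

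First I would treat the case $s=\alpha_1^2/k_1\ (\ge 1)$, where $\alpha=\alpha_1\sqrt{k/k_1}$. Lemma \ref{tc} with $r=k_1+1$ and $m=k+1$ (legitimate since $m>r\Leftrightarrow k_2>0$) gives
$$\tan\theta_c\!\left(k+1,\ \tfrac{k+1}{k_1+1}\,\alpha_1\right)<\tfrac{k_1+1}{k+1}\tan\theta_c(k_1+1,\alpha_1)<\infty,$$
so $\theta_c(k+1,\tfrac{k+1}{k_1+1}\alpha_1)$ exists. It then remains only to compare second arguments: the inequality $\tfrac{k+1}{k_1+1}\ge\sqrt{k/k_1}$ is equivalent, after squaring and clearing denominators, to $(k-k_1)(kk_1-1)\ge 0$, which holds since $k>k_1$ and $kk_1\ge 2$. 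Thus $\tfrac{k+1}{k_1+1}\alpha_1\ge\alpha$, and the monotonicity in the second argument yields the existence of $\theta_c(k+1,\alpha)$. The symmetric case $s=\alpha_2^2/k_2$ is identical with $L_1$ and $L_2$ interchanged.

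It remains to handle $s=1$, i.e. $\alpha=\sqrt{k}$, and this is where the hypothesis $k\ge 11$ is consumed; I expect this to be the delicate point. As base datum I would take the finiteness of $\theta_c(12,\sqrt{19})$ recorded in \eqref{36} above, whence by monotonicity $\theta_c(12,\sqrt{11})$ also exists. For $k\ge 12$ I apply Lemma \ref{tc} with $r=12$ and second argument $\sqrt{11}$ to obtain existence of $\theta_c\!\left(k+1,\ \tfrac{k+1}{12}\sqrt{11}\right)$, and then verify $\tfrac{k+1}{12}\sqrt{11}\ge\sqrt{k}$, i.e. $11(k+1)^2\ge 144k$, i.e. $11k^2-122k+11\ge 0$; since $\sqrt{14400}=120$ the roots are exactly $k=11$ and $k=\tfrac{1}{11}$, so the inequality holds precisely for $k\ge 11$, and monotonicity then delivers $\theta_c(k+1,\sqrt{k})$. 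For the boundary value $k=11$ (where $m=12=r$ and Lemma \ref{tc} is unavailable) the existence of $\theta_c(12,\sqrt{11})$ is itself the conclusion.

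The only non-formal ingredients are the two facts isolated in the first paragraph: the monotonicity of $\theta_c$ in its curvature argument, which I would justify by a comparison principle for the defining ODE, and the numerical existence of $\theta_c(12,\sqrt{19})$ from Lawlor's table. Beyond these, the argument is a double application of the scaling Lemma \ref{tc} together with the two elementary inequalities $(k-k_1)(kk_1-1)\ge 0$ and $11k^2-122k+11\ge 0$, the latter being exactly what fixes the threshold at $k_1+k_2\ge 11$.
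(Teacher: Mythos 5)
Your proof is correct and takes essentially the same route as the paper's: split according to which term realizes the maximum in Lemma \ref{alpha}, transport existence through the scaling inequality of Lemma \ref{tc}, use monotonicity of $\theta_c$ in its second slot (the paper's Footnote \ref{ft}, proved in the appendix), and anchor the slope-one case at a dimension-$12$ entry of Lawlor's table, which is exactly where $k_1+k_2\geq 11$ enters. The only differences are cosmetic: you scale up from the known data and then descend by monotonicity, while the paper scales down from the target and compares via the increasing function $x+\frac{1}{x}$ (encoding the same inequality $(k-k_1)(kk_1-1)\geq 0$), and you explicitly treat the boundary case $k=11$, where $m=r=12$ makes Lemma \ref{tc} inapplicable --- a point the paper's chain \eqref{cmF0} silently glosses over.
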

    %                           \begin{rem}\label{noptimal}
     %                          By no means that $11$ is optimal, 
         %                      we simply take the advantage of the vanishing angle table of \cite{Law}.
          %                     One can definitely improve the sum requirement.
            %                Since  successive minimal products make dimension larger and larger,
             %                  so any concrete lower bound on total dimension would be  fine for configuration results in below.
             %                  \end{rem}
                            
                            \begin{proof}
                            When $\alpha^2=k=k_1+k_2$,
                            by Lemma \ref{tc} one can see that
              \begin{equation}\label{cmF0}
                                    %     \ \ \ \ \ \ \ \ \ \ \ \ 
                                                   \tan \theta_c
                                                   \Big(k+1, \sqrt{k}
                                                                                                      \Big)
                                                   \leq
                                                   \frac{ 12}{k+1}\tan \theta_c
                                                            \left(
                                                                           12,
                                                                               \, 
                                                                                 \frac{12}{k+1}
                                                                       \sqrt{k
                                                                       }
                                                                                 \right)
                                                                                 <
                                                                                   \frac{ 12}{k+1}\tan \theta_c
                                                            \left(
                                                                           12,
                                                                                 \sqrt{12}                                                                                 \right)
                                     \end{equation}
                                     \text{exists}.

                                     When $\alpha^2=\frac{k\alpha_\tau^2}{k_\tau}$ either for $\tau=1$ or for $\tau=2$,
                \begin{eqnarray}
                                    %     \ \ \ \ \ \ \ \ \ \ \ \ 
                                                 &&  \tan \theta_c
                                                   \Big(k+1, \alpha
                                                                                                      \Big)
                                                                                                      \nonumber\\
                                                   &<&
                                                   \frac{ k_\tau+1}{k+1}
                                                   \tan \theta_c
                                                            \left(
                                                                           k_\tau+1,
                                                                               \, 
                                                                              \frac{ k_\tau+1}{k+1}
                                                                  \alpha
                                                                                 \right)
                                                                       \nonumber          \label{cpF1}\\
                                                                         &   =&
                                                                              \frac{ k_\tau+1}{k+1}
                                                                                 \tan \theta_c
                                                            \left(
                                                                          k_\tau+1,
                                                                       \frac{ \sqrt{k_\tau}+\frac{1}{\sqrt{k_\tau}}}{\sqrt{k}+\frac{1}{\sqrt{k}}}       \alpha_\tau                                                                      \right)
                                                       \nonumber             \label{cpF2}   \\
                                                                     &   <&
                                                                              \frac{ k_\tau+1}{k+1}
                                                                                 \tan \theta_c
                                                            \left(
                                                                          k_\tau+1,
                                                               \alpha_\tau                                                                      \right)
\label{cpF3}
                 \end{eqnarray}
and thus $\theta_c
                                                  (k+1, \alpha
                                                                                                      )$
                                                                                                      exists  by the assumption  $C(L_\tau)$ is a Type-c cone.

  Thus, no matter which value  $\alpha^2$ takes from $\left\{k, k\frac{\alpha^2_1}{k_1}, k\frac{\alpha^2_2}{k_2}\right\}$,
                   we see from the above that
                   $\theta_c
                                                   (k+1, \alpha
                                                                                                      )$
                                                                                                      always exists.
                            \end{proof}
                            
                    {\ }
                            
                            %\subsubsection{More configuration results. }
                           We are ready to obtain more configuration results in this subsection.
                           
                           \begin{thm}\label{t2}
                           Suppose $L_1, L_2$ span Type-c cones 
                           with $k_1,\, k_2\geq 3$. 
                           %and $k_1+k_2\geq 11$.
                           If {\tt s}$(L_1)$={\tt s}$(L_2)\geq 1$,
                           then their minimal product $L_1\times L_2$ spans a Type-c cone of class (II).
                           \end{thm}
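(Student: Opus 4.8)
The plan is to prove the two assertions separately, since the class-(II) statement is essentially free while the Type-c statement carries all the content. For the former, note that by Lemma \ref{alpha} one has $\alpha^2=k\max\{{\tt s}(L_1),{\tt s}(L_2),1\}\geq k$ with $k=k_1+k_2$, so ${\tt s}(L_1\times L_2)\geq 1$; thus $L_1\times L_2$ lies in class (II) already by Proposition \ref{p1}, and the slope hypothesis is not even needed here. The real goal is to verify $\theta_c(k+1,\alpha)\leq \frac{R}{2}$, where $\alpha=\alpha(L_1\times L_2)$ and $R=R(L_1\times L_2)$.

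First I would pin down $\alpha$ exactly. Writing $s:={\tt s}(L_1)={\tt s}(L_2)\geq 1$, Lemma \ref{alpha} gives $\alpha^2=ks$, hence the symmetric identity
\[
\alpha^2=\frac{k}{k_\tau}\,\alpha_\tau^2\qquad\text{for both }\tau=1\text{ and }\tau=2 .
\]
This is the one place the equal-slope assumption is used: it allows either factor to serve as the reference cone. Relabel so that the normal-radius formula \eqref{l1} is attained at the index $\tau$, i.e. $\cos R=1-\lambda_\tau^2(1-\cos R_\tau)$. Reusing verbatim the chain from the proof of Lemma \ref{ThcE} (apply Lemma \ref{tc} with $m=k+1$, $r=k_\tau+1$, then use monotonicity of $\theta_c$ in its curvature slot together with the identity $\frac{k_\tau+1}{k+1}\alpha=\frac{\sqrt{k_\tau}+1/\sqrt{k_\tau}}{\sqrt{k}+1/\sqrt{k}}\,\alpha_\tau<\alpha_\tau$), I obtain
\[
\tan\theta_c(k+1,\alpha)<\frac{k_\tau+1}{k+1}\tan\theta_c(k_\tau+1,\alpha_\tau)\leq \frac{k_\tau+1}{k+1}\tan\frac{R_\tau}{2},
\]
where the last inequality is exactly the Type-c hypothesis on $L_\tau$.

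It then remains only to supply the matching lower bound $\tan\frac{R}{2}\geq \frac{k_\tau+1}{k+1}\tan\frac{R_\tau}{2}$, which is precisely the Key Control Lemma \ref{KC} applied to the minimizing index $\tau$; its dimension hypothesis is satisfied since $k_1,k_2\geq 3$ forces the non-minimizing factor to have dimension $\geq 3$ and $k_\tau\geq 2$. Chaining the two displays yields $\theta_c(k+1,\alpha)<\frac{R}{2}$, so $C(L_1\times L_2)$ is Type-c, completing the proof.

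The main obstacle is the matching of the two constants, and overcoming it is the sole reason Lemma \ref{KC} is needed in place of Corollary \ref{Dcoro}: the naive estimate $\tan\frac{R}{2}>\frac{k_\tau}{k}\tan\frac{R_\tau}{2}$ carries the ratio $\frac{k_\tau}{k}$, which is strictly below the ratio $\frac{k_\tau+1}{k+1}$ emerging from the vanishing-angle side and is therefore too weak to close the argument — exactly the gap flagged in the discussion before Lemma \ref{KC}. The improved inequality \eqref{Impineq} removes this gap, at the cost of its hypothesis $R_\tau\leq\frac{\pi}{2}$, which I would verify for the minimizing index; this is the one delicate point, and it is automatic via Lemma \ref{tcless90} whenever the factors are themselves minimal-product links, as happens throughout the iterated configuration constructions to follow.
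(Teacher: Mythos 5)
Your main chain of inequalities is exactly the paper's Case 1: relabel so that \eqref{l1} is attained at an index $\tau$, combine Lemma \ref{tc} and the monotonicity of $\th_c$ in its curvature slot (as in the proof of Lemma \ref{ThcE}) with the Type-c hypothesis on $L_\tau$, and close the loop with the Key Control Lemma \ref{KC}; your handling of the class (II) claim and of the dimension hypothesis of Lemma \ref{KC} is also fine.

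The genuine gap is the remaining hypothesis $R_\tau\leq\frac{\pi}{2}$ of Lemma \ref{KC}. The theorem does not assume that $L_1,L_2$ are themselves minimal products, so Lemma \ref{tcless90} does not apply to them, and nothing in the stated hypotheses (Type-c, $k_i\geq 3$, equal slope $\geq 1$) excludes $R_\tau>\frac{\pi}{2}$. Your proof stops at precisely this point: you say you ``would verify'' it and that it is ``automatic'' when the factors are minimal-product links, which amounts to proving the theorem only under an extra, unstated assumption. The paper devotes a separate Case 2 to exactly this situation, with a different mechanism that your proposal is missing: if $R_\tau>\frac{\pi}{2}$, then $\cos R\leq 1-\la_\tau^2$, so $2\left(\frac{R}{2}\right)^2\geq 2\sin^2\frac{R}{2}=1-\cos R\geq \la_\tau^2$ and hence $\frac{R}{2}\geq\sqrt{\frac{k_\tau}{2k}}$; on the other side, the equal-slope-$\geq 1$ hypothesis gives $\alpha\geq\sqrt{k}$, and the rough control $\th_c(k+1,\alpha)\leq\tan\th_c(k+1,\alpha)<\frac{1}{\alpha}\leq\frac{1}{\sqrt{k}}$ (the comparison function $(1-\alpha t)e^{\alpha t}$ vanishes at $t=\frac{1}{\alpha}$) then yields $\th_c(k+1,\alpha)<\frac{R}{2}$ as soon as $k_\tau\geq 2$. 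Without this second case, or a proof that links of Type-c cones with slope $\geq 1$ necessarily have normal radius at most $\frac{\pi}{2}$, your argument is incomplete.
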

                        
                  %      \begin{rem}
                 %       The famous Simons cones are typical examples of the theorem.
                   %     \end{rem}
               %         \begin{rem}\label{on11}
                %        If {\tt s}$(L_1)$={\tt s}$(L_2)< 1$, the proof in below will  bother \eqref{cmF0} with
                %        additional assumption $k_1+k_2\geq 11$. %or some smaller number, see Remark \ref{noptimal}. 
                  %      \end{rem}
                           
                           \begin{proof}
                           Without loss of generality,
                           assume that 
                                                                   $\cos R=1-\la_1^2(1-\cos R_1)$.
                                                                   There are two cases to be discussed separately.
                                                               
                                                               {\ }
                                                                  
                                                                   Case 1. $R_1\leq \frac{\pi}{2}$.
In this case we     can                                                                                             
apply the key control Lemma \ref{KC}
under the assumption on dimensions
($k_1,\, k_2\geq 3$ would be enough for \eqref{dimensions}, as condition (2) of Lemma \ref{KC} satisfied)
                to get 
                   \begin{eqnarray}
                                           \tan \frac{R}{2}
                                           &\geq&
                                          \frac{k_1+1}{k_1+k_2+1}
                                           \tan \frac{R_1}{2}
                     \label{dimensions}
                     \\
                                           &\geq&
                                           \frac{k_1+1}{k_1+k_2+1}
                                           \tan \big(\th_c(k_1+1, \alpha_1)\big)
                                           \nonumber\\
                                          & >&
                                          \tan 
                                          \big(\th_c(k_1+k_2+1,  \frac{k_1+k_2+1}{k_1+1}\sqrt{{\tt s}(L_1)k_1})\big)
                                          \nonumber
                                          \\
                                          & =&
                                           \tan 
                                          \big(\th_c(k_1+k_2+1,  \frac{\sqrt{k_1+k_2}+\frac{1}{\sqrt{k_1+k_2}}}{\sqrt {k_1}+\frac{1}{\sqrt {k_1}}}\sqrt{{\tt s}(L_1)(k_1+k_2)})\big)
                                           \nonumber\\
                                          & >&
                                            \tan 
                                          \big(\th_c(k_1+k_2+1,  \alpha) \big)
\label{tobealpha}
                             %             \tan 
                                     %     \big(\th_c(k_1+k_2+1,  \frac{k_1+k_2+1}{\sqrt {k_1}+1\big/\sqrt {k_1}}\frac{\alpha_1}{\sqrt {k_1}})\big) 
%
                           \end{eqnarray}
                                The last inequality \eqref{tobealpha} is due to the increasing property of $x+\frac{1}{x}$ when $x\geq 1$,
                                the increasing property of $\th_c(m, \cdot)$ in the second slot
                                \footnote{\label{ft}This is clear in \cite{Law} but to be self-contained we give a short proof in appendix.},
                                and the fact that $\alpha^2=(k_1+k_2){\tt s}(L_1)$          
                                under the assumption.
                                
                                   {\ }
                                   
                                                                   Case 2. $\frac{\pi}{2}< R_1\leq \pi$.
                 By the proof of Lemma \ref{ThcE} for {\tt s}$(L_1)$={\tt s}$(L_2)\geq 1$ the vanishing angle $\theta_c
                                                   (k+1, \alpha
                                                                                                      )$
                                                                                                      always exists
                                                                                                      without  bothering
                                             \eqref{cmF0}  
                                              so that we   have no assumption on total dimension.                     
                                                                                                      Moreover, 
                                                                                                      there is a free rough control
                                                                                                        $
                                                                                                        \theta_c
                                                   (k+1, \alpha
                                                                                                      )
                                                                                                      \leq
                                                                                                        \tan \theta_c
                                                   (k+1, \alpha
                                                                                                      )
                                                                                                      <\frac{1}{\alpha}$
                                                                                                      since the function $(1-\alpha t)e^{\alpha t}$ vanishes at $t=\frac{1}{\alpha}$ (see appendix for more information about the equivalent calibration inequality where  solution needs to reach zero before $t=\frac{1}{\alpha}$).
                                                 
                                                 Under the assumption of Case 2, 
                                                 we have 
                                                 $\cos R\leq 1-\la_1^2$
                                                 which  implies
                                                 $$
                                                 2
                                                 \left(\frac{R}{2}\right)^2\geq
                                                  2\sin^2{\frac{R}{2}}
                                                  =1-\cos R
                                                  \geq 
                                                  \la_1^2,
                                                 $$     
                                                 and hence     
                                                 $\frac{R}{2}\geq \sqrt{\frac{k_1}{2k}}$.      
                                                 As $\alpha\geq \sqrt k$ ,
                                                 we          also get           
                                                 $
                                                 \frac{R}{2}\geq \theta_c
                                                   (k+1, \alpha
                                                                                                      )$            
                                                                                                      when $k_1\geq 2$.  
                                                                                                      
                                                                                                      {\ }
                                                                                                      
                                                                                                      Therefore, in both Case 1 and Case 2,
                                                                                                      one can apply Lawlor's criterion
                                                                                                      and
                                                                                                      the proof gets completed.
                           \end{proof}

                          \begin{rem}\label{classb}
                       Examples of class (b) are many, see Example 4.    
                       All elements of class (b) has slope one.
                       This theorem 
                       shows its power
                       and derives
                       a universal structure for the category of Type-c cones of class (b).
                          \end{rem}
                          
                           Not as in Theorem \ref{main} where sufficiently many copies of a given $L$ are needed,
                           for links of Type-c cones of class (II) two copies can work.
                           
                           \begin{cor}\label{t2c}
                           Let $L^{k}$ be the link of a Type-c cone of class (II) with $k\geq 3$ and $n\geq 2$.
                           Then the minimal product $L^{\times n}$ can also  span a Type-c cone of class (II). 
                           \end{cor}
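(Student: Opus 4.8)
The plan is to induct on $n$, with Theorem \ref{t2} supplying the inductive engine and Lemma \ref{alpha} supplying the bookkeeping on slopes. First I would read off from Lemma \ref{alpha} that for any two minimal submanifolds $L_1,L_2$ in spheres,
$$
{\tt s}(L_1\times L_2)=\frac{\alpha^2}{k_1+k_2}=\max\left\{{\tt s}(L_1),\,{\tt s}(L_2),\,1\right\}.
$$
In particular, if $L$ is of class (II), so that ${\tt s}(L)\geq 1$, then ${\tt s}(L\times L)=\max\{{\tt s}(L),1\}={\tt s}(L)$; iterating, ${\tt s}(L^{\times n})={\tt s}(L)\geq 1$ for every $n\geq 1$. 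Thus each iterated product $L^{\times n}$ is automatically of class (II) and shares the common slope ${\tt s}(L)$, which is exactly the equal-slope hypothesis demanded by Theorem \ref{t2}.

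With this in hand the induction is short. For the base case $n=2$ I would apply Theorem \ref{t2} to $L_1=L_2=L$: both factors have dimension $k\geq 3$, both span Type-c cones since $L$ does, and ${\tt s}(L_1)={\tt s}(L_2)={\tt s}(L)\geq 1$; hence $L^{\times 2}$ spans a Type-c cone of class (II). For the inductive step I would write $L^{\times(n+1)}=L^{\times n}\times L$, using associativity of the minimal product for the relevant scalings as in Remark \ref{indRk}, and again invoke Theorem \ref{t2}, now with $L_1=L^{\times n}$ of dimension $nk\geq 3$ and $L_2=L$ of dimension $k\geq 3$. The first factor spans a Type-c cone by the inductive hypothesis and the second by assumption, while the slope identity gives ${\tt s}(L^{\times n})={\tt s}(L)={\tt s}(L_2)\geq 1$. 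Theorem \ref{t2} then produces a Type-c cone of class (II) on $L^{\times(n+1)}$, closing the induction.

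There is essentially no obstacle here, since Theorem \ref{t2} does all the analytic work. The only point meriting a moment's care is the stability of the common slope under iterated products, and that is immediate from the formula ${\tt s}(L_1\times L_2)=\max\{{\tt s}(L_1),{\tt s}(L_2),1\}$: once both inputs have slope $\geq 1$, the maximum leaves the value unchanged, so the equal-slope hypothesis of Theorem \ref{t2} survives every step of the induction. One should also note that the dimension requirement $k_i\geq 3$ is comfortably met at each stage, since both $k$ and $nk$ exceed $3$.
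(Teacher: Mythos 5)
Your proof is correct and takes essentially the same route as the paper: Corollary \ref{t2c} is stated there as an immediate consequence of Theorem \ref{t2}, obtained by iterating it just as you do, with Lemma \ref{alpha} ensuring that the slope ${\tt s}(L^{\times n})=\max\{{\tt s}(L),1\}={\tt s}(L)\geq 1$ is preserved at every stage so the equal-slope hypothesis of Theorem \ref{t2} keeps holding. Your explicit induction simply fills in what the paper leaves to the reader.
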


                               From the proof of Theorem \ref{t2},
                           we know that as long as $\cos R$ and $\alpha^2$ are determined by the same slot of the minimal product
                           then the minimal product spans a Type-c cone.
                           There are many specific cases for which this  coincidence in the same slot occurs.
                     %However, 
                           A general  structure result is the following (cf. Theorem \ref{addthm}).
                           
                           \begin{thm}\label{t3}
                           Let $L^k$ be the link of a Type-c cone  of class (II)  and $n\geq k\geq 3$.
                           %with $k+n\geq 11$.
                           Then $L\times \mathbb S^n$ can span a Type-c cone.
                           \end{thm}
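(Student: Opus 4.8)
The plan is to set $L_1=L$ (dimension $k_1=k$) and $L_2=\mathbb{S}^n$ (dimension $k_2=n$) and to run the two-case argument of Theorem \ref{t2}. The totally geodesic sphere contributes $\alpha_2=0$ and $\cos R_2=-1$. Writing $\alpha_1$ for the supremum second-fundamental-form norm of $L$, the class (II) hypothesis reads {\tt s}$(L)=\alpha_1^2/k\geq 1$, so Lemma \ref{alpha} gives $\alpha^2=(k+n)\max\{\alpha_1^2/k,\, 0,\, 1\}=(k+n)\alpha_1^2/k$; thus $\alpha^2$ is governed by the $L$-slot. The first point I would verify is that $\cos R$ is governed by the same slot. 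By \eqref{l1}, $\cos R=1-\min\{\lambda_1^2(1-\cos R_1),\, 2\lambda_2^2\}$, and since $1-\cos R_1\leq 2$ and $n\geq k$ one has $\lambda_1^2(1-\cos R_1)=\frac{k}{k+n}(1-\cos R_1)\leq \frac{2k}{k+n}\leq \frac{2n}{k+n}=2\lambda_2^2$, whence $\cos R=1-\lambda_1^2(1-\cos R_1)$. This same-slot coincidence, forced by the dimension hypothesis $n\geq k$, is the crucial structural point.

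Next I would record the existence of $\theta_c(k+n+1,\alpha)$. Since $\alpha^2=(k+n)\alpha_1^2/k$ sits in the $\tau=1$ slot of Lemma \ref{ThcE}, existence follows at once from $L$ being Type-c together with the ratio inequality \eqref{ratiotc} of Lemma \ref{tc} (applied with $m=k+n+1>r=k+1$), and no lower bound on the total dimension is required.

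For the main estimate I would split into two cases, exactly as for Theorem \ref{t2}. If $R_1\leq \frac{\pi}{2}$, then $n\geq k\geq 3$ meets condition (2) of the Key Control Lemma \ref{KC} (here $k_1=k\geq 2$ and $k_2=n\geq 3$), so $\tan\frac{R}{2}\geq \frac{k+1}{k+n+1}\tan\frac{R_1}{2}$. As $L$ is Type-c, $\tan\frac{R_1}{2}\geq \tan\theta_c(k+1,\alpha_1)$, and then Lemma \ref{tc}, the monotonicity of $\theta_c(m,\cdot)$ in its second slot, and the increasing property of $x+\frac{1}{x}$ for $x\geq 1$ give $\frac{k+1}{k+n+1}\tan\theta_c(k+1,\alpha_1)>\tan\theta_c(k+n+1,\frac{k+n+1}{k+1}\alpha_1)\geq \tan\theta_c(k+n+1,\alpha)$, the last step because $\frac{k+n+1}{k+1}\alpha_1\geq \alpha$. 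This is the same chain as \eqref{dimensions}--\eqref{tobealpha}, and it yields $\frac{R}{2}>\theta_c(k+n+1,\alpha)$, so Lawlor's criterion Theorem \ref{LC} applies. If instead $\frac{\pi}{2}<R_1\leq \pi$, I would use $\cos R=1-\lambda_1^2(1-\cos R_1)<1-\lambda_1^2$ to get $1-\cos R>\lambda_1^2=\frac{k}{k+n}$, hence $\frac{R}{2}\geq \sqrt{\frac{k}{2(k+n)}}$; since $\alpha\geq \sqrt{k+n}$ the rough control gives $\theta_c(k+n+1,\alpha)<\frac{1}{\alpha}\leq \frac{1}{\sqrt{k+n}}$, and $\sqrt{\frac{k}{2(k+n)}}\geq \frac{1}{\sqrt{k+n}}$ (equivalently $k\geq 2$) closes the estimate.

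The two chains of inequalities are routine, being identical to those already carried out for Theorem \ref{t2}. The real content, and the step I expect to demand the most care, is the slot coincidence of the first paragraph: one must check that $n\geq k$ is precisely what forces the normal-radius formula \eqref{l1} to select the $L$-slot even in the extreme case $R_1=\pi$, so that the Key Control Lemma \ref{KC} and the Type-c hypothesis on $L$ act on the same factor $\tan\frac{R_1}{2}$.
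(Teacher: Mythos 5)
Your proposal is correct and follows essentially the same route as the paper: the paper's own (very terse) proof consists precisely of checking that under $n\geq k$ and the class (II) hypothesis both $\cos R$ (via \eqref{l1}, since $\frac{k}{k+n}(1-\cos R_1)\leq \frac{2n}{k+n}$) and $\alpha^2=(k+n)\,{\tt s}(L)$ are governed by the $L$-slot, and then invoking the two-case machinery of Theorem \ref{t2} — which is exactly the slot-coincidence observation plus Case 1/Case 2 argument (Key Control Lemma \ref{KC} when $R_1\leq\frac{\pi}{2}$, rough control $\th_c<\frac{1}{\alpha}$ otherwise) that you spell out. Your explicit verifications, including the existence of $\th_c$ via the $\tau=1$ slot of Lemma \ref{ThcE} with no total-dimension bound, match the paper's intended argument in full detail.
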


                           \begin{proof}

                           Under the dimension assumption,
                           we can see 
                           from 
                           %\eqref{l1}
                           Lemma \ref{Klem}
                           that
                           $$\cos R=1-\frac{k}{n+k}(1-\cos R(L)).$$
                           Moreover, by Lemma \ref{alpha}
                           we have 
                           $\alpha^2(L\times \mathbb S^n)=(k+n)\max\{1, {\tt s}(L)\}=(k+n){\tt s}(L)$.
                           Consequently, following the reasoning in the preceding paragraph, the conclusion stands true.
                           \end{proof}
                           
                           Moreover, a subtle structure result  related to Theorem \ref{main} can be obtained.
                           
                           \begin{thm}\label{gap}
                           Suppose $L=L_1^{\times s_1} \times \cdots \times L_n^{\times s_n}$ with $n\geq 2$, $s_i\geq 1$ 
                           and each component dimension $k_i>0$ for $i=1,\cdots, n$ spans a Type-c cone.
                           Then so does 
                           $
                           \tilde L\triangleq 
                           L_1^{\times (s_1+\ell_1)} \times \cdots \times L_n^{\times (s_n+\ell_n)}$ where $\ell_i\geq 0$ and $\sum_{i+1}^n k_i\ell_i\geq 3$.
                           \end{thm}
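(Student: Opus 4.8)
The plan is to regard the enlarged product as a two–factor minimal product and to rerun the argument of Theorem \ref{t2} (Case~1) on it. Write $L\triangleq L_1^{\times s_1}\times\cdots\times L_n^{\times s_n}$, of dimension $k=\sum_i s_ik_i$, which by hypothesis spans a Type-c cone, and set $M\triangleq L_1^{\times \ell_1}\times\cdots\times L_n^{\times \ell_n}$, of dimension $m=\sum_i k_i\ell_i\geq 3$ (at least one $\ell_i>0$). By associativity of the minimal product construction (used already in Remark \ref{indRk}), $\tilde L\cong L\times M$, a two–factor minimal product of dimension $\tilde k=k+m$.

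First I would record two structural facts. Let $i_0$ realize $D\triangleq\min_{1\leq i\leq n}k_i(1-\cos R_i)$; since every $L_i$ (each $s_i\geq 1$) already appears in $L$, formula \eqref{l2} gives $\cos R(L)=1-\frac{D}{k}$ and $\cos R(\tilde L)=1-\frac{D}{\tilde k}$, whence
\[
\cos R(\tilde L)=1-\frac{k}{\tilde k}\bigl(1-\cos R(L)\bigr).
\]
Thus the $L$–slot is exactly the slot determining $\cos R(\tilde L)$, which is what makes Lemma \ref{KC} applicable with $L$ as its first factor. Next, by Lemma \ref{alpha} applied inductively the slope of a product is the maximum of $1$ and the slopes of its constituents, so ${\tt s}(\tilde L)={\tt s}(L)=S\triangleq\max\{1,{\tt s}(L_1),\dots,{\tt s}(L_n)\}$ (the set of distinct factor types being unchanged); hence $\alpha(L)=\sqrt{kS}$ and $\alpha(\tilde L)=\sqrt{\tilde k S}$.

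With these in hand the estimate proceeds as in Theorem \ref{t2}. Since $L$ is itself a minimal product, $R(L)\leq \frac{\pi}{2}$ by Lemma \ref{tcless90}, so we stay entirely in Case~1; together with $m\geq 3$ and $k\geq 2$ the hypotheses of the Key control Lemma \ref{KC} (condition~(1) when $m\geq 4$, condition~(2) when $m=3$) are met, giving
\[
\tan\frac{R(\tilde L)}{2}\ \geq\ \frac{k+1}{\tilde k+1}\,\tan\frac{R(L)}{2}\ \geq\ \frac{k+1}{\tilde k+1}\,\tan\bigl(\theta_c(k+1,\alpha(L))\bigr),
\]
where the second inequality uses that $L$ spans a Type-c cone. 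Applying Lemma \ref{tc} with $r=k+1$ and $m=\tilde k+1$ to $\alpha(L)=\sqrt{kS}$, then the monotonicity of $\theta_c(\tilde k+1,\cdot)$ in its second slot together with $\sqrt{\tilde k}\leq\frac{\tilde k+1}{k+1}\sqrt{k}$ (equivalently $\sqrt{\tilde k}+\tfrac{1}{\sqrt{\tilde k}}\geq\sqrt{k}+\tfrac{1}{\sqrt{k}}$, the increasing property of $x+\tfrac1x$ for $x\geq1$), I obtain
\[
\frac{k+1}{\tilde k+1}\,\tan\bigl(\theta_c(k+1,\alpha(L))\bigr)\ >\ \tan\bigl(\theta_c(\tilde k+1,\alpha(\tilde L))\bigr).
\]
Chaining the two displays yields $\frac{R(\tilde L)}{2}>\theta_c(\tilde k+1,\alpha(\tilde L))$, so $\tilde L$ spans a Type-c cone by Lawlor's criterion (Theorem \ref{LC}); existence of this vanishing angle follows exactly as in Lemma \ref{ThcE}, since ${\tt s}(\tilde L)={\tt s}(L)$ and $\theta_c(k+1,\alpha(L))$ already exists.

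The routine chain of tangent estimates is inherited essentially verbatim from Theorem \ref{t2}, so the genuinely new point—and the step I would watch most carefully—is the alignment of slots: one must verify that enlarging the copy counts moves neither the minimizing slot of the normal–radius formula \eqref{l2} nor the maximizing slot of the slope. Both are immediate here because $D_M\triangleq\min_{i:\,\ell_i>0}k_i(1-\cos R_i)\geq D$ (so the $L$–slot still realizes the minimum for $\tilde L$) and because the distinct factor types are unchanged; but it is precisely this alignment that lets the strong two–factor Key control (Lemma \ref{KC}) rather than the weaker generic ratio of Lemma \ref{tc} govern the normal radius. Were $m$ allowed to be $1$ or $2$, the dimension hypotheses of Lemma \ref{KC} would fail, which is exactly why the assumption $\sum_i k_i\ell_i\geq 3$ is imposed.
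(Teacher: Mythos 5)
Your proof is correct and follows essentially the same route as the paper: both view the enlarged link as a two-factor minimal product with first factor $L\triangleq L_1^{\times s_1}\times\cdots\times L_n^{\times s_n}$, observe that $L$ determines both the normal-radius slot and the slope of the product, and then invoke Lemma \ref{tcless90}, the key control Lemma \ref{KC}, and the Case 1 chain (Lemma \ref{tc} plus second-slot monotonicity) from Theorem \ref{t2}. The only difference is in the reduction step, and it is a welcome one: the paper explicitly treats adding a single copy of a factor with $k_1\geq 3$ and says other situations ``can be derived accordingly'', whereas your grouping of all added copies into one second factor $M$ of dimension $\sum_i k_i\ell_i\geq 3$ uniformly covers the cases (e.g.\ several added copies of low-dimensional factors) that the paper's explicit reduction does not literally reach.
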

                          
                          \begin{rem}
                          Here  $L_i$ itself may not span area-minimizing cone.
                          \end{rem}
                          
                          \begin{rem}
                          If we use $L\prec \check L$ to describe 
                          the relation of increasing exponential order between $L$ and $\check L$,
                          then,
                          given a strictly increasing sequence of this relation,
                           according to Remark \ref{full}
                           there must be a (largest) successive infinite tail 
                           elements of which all span area-minimizing cones.
                           The result here
                           indicates
                           that the tail
                           and the element in the sequence  which spans an area-minimizing cone of lowest dimension
                           has gap at most $2$ in dimension.
                          \end{rem}
                          
                %         \begin{rem}\label{uniform}
               %          This result in particular indicates that, for given $\{L_i\}_{i=1}^n$,
                %            Theorem \ref{main} 
                 %        can guarantee the existence of uniform $\tilde k$
                 %           such that whenever  total dimension of minimal product among $\{L_i\}_{i=1}^n$
                  %           is no less than $\tilde k$ %of a minimal product $L$ among copies of $\{L_i\}_{i=1}^n$
                  %          the  cone $C(\tilde L)$ is then a Type-c cone and thus area-minimizing.
                  %      This $\tilde k$ would be less than $k$ in Remark \ref{full} in general.
                   %       \end{rem}
                          
                          \begin{proof}
                          Without loss of generality, we consider $\check L \triangleq L_1^{\times (s_1+1)} \times \cdots \times L_n^{\times s_n}$ with $k_1\geq 3$ only
                          and other situations can be derived accordingly.
                         Note that by Lemma \ref{tcless90} it follows $R(L)\leq \frac{\pi}{2}$. %where $L \triangleq L_1^{\times s_1} \times \cdots \times L_n^{\times s_n}$.
                         By Proposition \ref{p1} or Lemma \ref{alpha} we have ${\tt s}(L)\geq \max\{1, {\tt s}(L_1)\}$.
                         Therefore, 
                         both
                         $R(\check L)$ and ${\tt s}(\check L)$
                         are decided by the first input $L$ of 
                         the minimal product of
                         $
                         L
                         $
                         and $L_1$.
                         When $k_1\geq 3$,
                         the key control Lemma \ref{KC} applies
                         and the argument in the proof in Case 1 of Theorem \ref{t2}
                         leads us to the conclusion of the theorem.
                          \end{proof}
                          
                           In fact more subtle structures can be established.
                           To illustrate the point, let us mention two more situations,
                           based on which interested readers may figure out many others.
  
  %%%20251112 improved version                         
                           \begin{thm}\label{manyS}
                           Let $L^k$ be the link of a Type-c cone with $k\geq 3$.
                           If 
                           %
                         %  (1) 
                           $L$ is of class (II) %with no other assumption on dimension
                       %    or 
                         %  
                      %     (2) $L$ is of class (a)
                      %    with $d=k+\sum_{i=1}^{n} d_i\geq 11$,
                     %     \\
                          then
                            $L^k\times \mathbb S^{d_1}\times\cdots \mathbb S^{d_n}$  spans a Type-c cone.
                           \end{thm}
                           \begin{proof}
                            The reason is following.
                           If some slot $ \mathbb S^{d_\tau}$ decides $R$ instead of component $L$,
                           then we are in Case 2 in the proof of Theorem \ref{t2}
                           and can get  $\frac{R}{2}\geq\sqrt{\frac{d_\tau}{d}}$  now and 
                           $\th_c(d+1, \alpha)
                                      <
                                            \frac{1}{\alpha}
                                            \leq \sqrt{\frac{1}{d}}
                                                 \leq \sqrt{\frac{d_\tau}{d}}$ by the existence of $\th_c$.
                           
                           When $R$ is decided by $L$,
                          one only needs to  note that $\alpha$ is also decided by $L$
                          and  the statement follows according to the proof of Theorem \ref{t2}.
                           \end{proof}

                           Instead of successively multiplying spheres, one can also consider using other kinds of links of class (I) (or their mixed types). 
                           For example, let $\tilde M$ be a focal submanifolds with ${\tt g}=4$ of sufficiently large dimension $n$.
                          % Different from that in Theorem \ref{main}.
                           Since  normal radius of each of them is always $\frac{\pi}{2}$,
                           we get the following.
                           
                                                      \begin{thm}\label{t4}
                           Let $L^k$ be the link of a Type-c cone  of class (II)  and $\frac{n}{2}\geq k\geq 3$.
                           %with $k+n\geq 11$.
                           Then $L\times \tilde M^n$ can span a Type-c cone.
                           \end{thm}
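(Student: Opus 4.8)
The plan is to read $L\times\tilde M^{n}$ as the minimal product of the two factors $L$ (dimension $k_1=k$) and the focal submanifold $\tilde M$ (dimension $k_2=n$), and then to show that \emph{both} its slope and its normal radius are governed by the $L$-factor. Once this is in place, the statement falls under the principle recorded just before Theorem \ref{t3}: a minimal product whose $\cos R$ and $\alpha^{2}$ are decided by the same slot spans a Type-c cone. The two data attached to $\tilde M$ are the only external inputs needed. Being a focal submanifold of an isoparametric foliation with ${\tt g}=4$, $\tilde M$ is of class (a) by Example 2, so ${\tt s}(\tilde M)<1$, and its normal radius is $\frac{2\pi}{\tt g}=\frac{\pi}{2}$, whence $1-\cos R(\tilde M)=1$. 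By hypothesis $L$ is of class (II), so ${\tt s}(L)\geq 1$.

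First I would settle the slope. Lemma \ref{alpha} gives
\[
\alpha^{2}(L\times\tilde M)=(k+n)\max\big\{{\tt s}(L),\,{\tt s}(\tilde M),\,1\big\}=(k+n)\,{\tt s}(L),
\]
since ${\tt s}(L)\geq 1>{\tt s}(\tilde M)$; thus $\alpha^{2}$ issues from the $L$-slot. Next I would settle the normal radius through \eqref{l1}, comparing $\la_1^{2}\big(1-\cos R(L)\big)$ with $\la_2^{2}\big(1-\cos R(\tilde M)\big)$. Using the universal bound $1-\cos R(L)\leq 2$ and $1-\cos R(\tilde M)=1$,
\[
\la_1^{2}\big(1-\cos R(L)\big)\leq\frac{2k}{k+n}\leq\frac{n}{k+n}=\la_2^{2}\big(1-\cos R(\tilde M)\big),
\]
where the middle inequality is exactly the hypothesis $n\geq 2k$. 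Hence $\cos R=1-\la_1^{2}\big(1-\cos R(L)\big)$, so $\cos R$ is decided by the $L$-slot as well. This is precisely where the bound $\frac{n}{2}\geq k$, stronger than the $n\geq k$ of Theorem \ref{t3}, is consumed: because $\tilde M$ offers only $1-\cos R(\tilde M)=1$ in place of the $2$ available from a great sphere, one must pay the extra factor of two.

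With both invariants flowing from the first factor, I would then replay the proof of Theorem \ref{t2} verbatim. In Case 1, $R(L)\leq\frac{\pi}{2}$, and since $k_1=k\geq 3$ while $k_2=n\geq 2k\geq 4$ the Key Control Lemma \ref{KC} applies; chaining it with Lemma \ref{tc} and the monotonicity of $\th_c$ in its second slot yields $\tan\frac{R}{2}>\tan\th_c(k+n+1,\alpha)$. In Case 2, $R(L)>\frac{\pi}{2}$ forces $\frac{R}{2}>\sqrt{\frac{k}{2(k+n)}}$, while the rough control $\th_c(k+n+1,\alpha)<\frac{1}{\alpha}\leq\frac{1}{\sqrt{k+n}}\leq\sqrt{\frac{k}{2(k+n)}}$ (valid since $k\geq 2$ and $\alpha\geq\sqrt{k+n}$) closes the gap; the existence of $\th_c(k+n+1,\alpha)$ needed here follows, as in Lemma \ref{ThcE}, from $C(L)$ being Type-c together with $\alpha^{2}=\frac{(k+n)\alpha_L^{2}}{k}$, and so requires no lower bound on the total dimension. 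Either way $\th_c(k+n+1,\alpha)<\frac{R}{2}$, and Lawlor's criterion Theorem \ref{LC} makes $C(L\times\tilde M)$ area-minimizing. The one step that genuinely demands attention is the slot-determination of $\cos R$; everything after it is a direct reuse of the machinery already assembled for Theorems \ref{t2} and \ref{t3}.
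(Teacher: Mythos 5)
Your proposal is correct and is essentially the paper's own argument: the paper establishes Theorem \ref{t4} by precisely this reduction, using $R(\tilde M)=\frac{\pi}{2}$ and the class (a) property of $\tilde M$ to force both $\cos R$ and $\alpha^2$ to be decided by the $L$-slot (which is exactly where the strengthened hypothesis $\frac{n}{2}\geq k$, versus $n\geq k$ in Theorem \ref{t3}, is consumed), and then invoking the same-slot principle extracted from the proof of Theorem \ref{t2}. Your two-case replay (Key Control Lemma \ref{KC} when $R(L)\leq\frac{\pi}{2}$; the rough bound $\th_c<\frac{1}{\alpha}$ together with existence via Lemma \ref{ThcE} when $R(L)>\frac{\pi}{2}$) is exactly the machinery the paper's brief proof implicitly relies on.
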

   \begin{proof}
 By the same kind of reasoning  in the proof of Theorem \ref{t3}.
   \end{proof}
 \begin{rem}
                           One can take $L$ to be a resulting Type (II) cone from Theorem \ref{main}.
                           Then Theorems \ref{t3}, \ref{manyS} and \ref{t4}
                           explore some extra structures.
                           Moreover, if $L$ is of class (b),
                           then so are the resultings in these theorems. 
                            \end{rem}

                           \begin{rem}\label{Frk}
                           We would like to point out 
                           that 
                           Lemma \ref{ThcE},
                           Theorem \ref{t2}, Remarks \ref{classb}, Corollary \ref{t2c}, Theorems \ref{t3}, \ref{manyS} and  \ref{t4}
                           will  also be true if ``Type-c" is replaced by ``Type-F".
                           The key point is to verify \eqref{cpF3} and \eqref{tobealpha} with the extra factor from \eqref{ratiotF}.
                           We provide details in appendix.
                           \end{rem}
                           
                           \begin{rem}
                           When ``Type-c" is replaced by ``Type-F",
                           the requirement the total dimension $\geq 11$ in Lemma \ref{ThcE} %, %Remark \ref{on11}
                           and
                           Theorem \ref{manyS}
                           can be
                           replaced 
                           by  the total dimension $\geq 7$,
                           since 
                           corresponding upper bound of replaced \eqref{cmF0} exists as $\th_F(8,\sqrt 7)<16^\circ$ according to Lawlor's table in \cite{Law}.
                           Also see  $(\ast)$ in appendix for remaining the requirement ``$k_1, k_2\geq 3$" unchanged in the Type-F version of Theorem \ref{t2}.
                           \end{rem}

                Based on 
                   the configuration results of this paper,
                   in  \cite{z0} 
                   we
                  get a constellation of
                   area-minimizing cones of codimension larger than one
                   which
                   cannot be calibrated by any globally defined continuous calibrations.
                        
                           {\ }    

{\ }

%\appendix

\section{Appendix}

We collect  proofs of Lemma \ref{tF}, Footnote \ref{ft} and Remark \ref{Frk} in appendix.

                                 \subsection{Proof of Lemma \ref{tF}. }
                                 
                                 As briefly mentioned in \S 2 that Lawlor's construction of area-decreasing projection is somehow equivalent to
                                 an inequality  for the dual form to be a calibration.
                                 With $t=\tan\th$ and $r$ the dimension of the cone,
                                 the inequality for calibration form is
                                 the following
                                 $$
                                 \left(g-\frac{tg'}{r}\right)^2
                                        +
                                              \left(\frac{g'}{r}\right)^2\leq 
                                                 \det
\left(
                                                                        \textbf{I}-t \, \textbf{h}_{ij}^v
                                                                        \right)^2,
                                                                        \text{ \ \ \ \ \ \ with } g(0)=1\ \ 
                                                                       \text{ and \ \ } g 
                                                                       \text{\  reaches }0.
                                 $$
                                 As Lawlor uses a uniform curve $\gamma$, 
                                 the above inequality becomes
                                  $$
                                 \left(g-\frac{tg'}{r}\right)^2
                                        +
                                              \left(\frac{g'}{r}\right)^2\leq 
                                                \inf_{x\in L, \, v\in S_x}
  \det
\left(
                                                                        \textbf{I}-t \, \textbf{h}_{ij}^v
                                                                        \right)^2
                                 $$
                                 and being a Type-F cone means that there exists $g(t)$ such that
        \begin{equation} \label{Lawlorineq}
                                 \left(g-\frac{tg'}{r}\right)^2
                                        +
                                              \left(\frac{g'}{r}\right)^2\leq 
    \left(F(\alpha, t, r)\right)^2,
     \text{ \ \ \ \ \ \ with } g(0)=1\ \
                                                                       \text{ and \ \ } g 
                                                                       \text{\  reaches }0.
                               \end{equation}
                                 Recalling \eqref{F}, we want to compare
                                   \begin{equation} 
                                                      F(\alpha, t, r)=
                                                                 \left(
                                                                 1-\alpha t\sqrt{\frac{r-1}{r}}
                                                                  \right)
                                                                  \left(
                                                                  1+\frac{\alpha t}{\sqrt{r(r-1)}}
                                                                  \right)^{r-1},
                                        \end{equation}
                                        and
                                         \begin{equation} 
                                                      F(\beta, s, m)=
                                                                 \left(
                                                                 1-\beta s\sqrt{\frac{m-1}{m}}
                                                                  \right)
                                                                  \left(
                                                                  1+\frac{\beta s}{\sqrt{m(m-1)}}
                                                                  \right)^{m-1}.
                                        \end{equation}
                                        Set $x=\alpha t\sqrt{\frac{r-1}{r}}=\beta s\sqrt{\frac{m-1}{m}}$
                                        and they have simpler expressions
                                         \begin{equation} \label{two}
                                                                 \left(
                                                                 1-x
                                                                  \right)
                                                                  \left(
                                                                  1+\frac{x}{{r-1}}
                                                                  \right)^{r-1}
                                                                  \text{\ \ \  and \ \ \ }
                                                                   \left(
                                                                 1-x
                                                                  \right)
                                                                  \left(
                                                                  1+\frac{x}{{m-1}}
                                                                  \right)^{m-1}.
                                        \end{equation}
                                        Note that, with fixed $x>0$,
                                   \begin{equation} \label{dn}
                                        \frac{d}{dn} \left(\log \left(1+\frac{x}{n}\right)^n\right)
                                        =\log\left(1+\frac{x}{n}\right)+n\cdot \frac{-\frac{x}{n^2}}{1+\frac{x}{n}}
                                        =\log \left(1+\frac{x}{n}\right)-\frac{x}{n+x}.
                           \end{equation} 
                                        Now taking derivative with respect to $x$ we have
                           \begin{equation*} 
                                        \frac{d}{dx}                                         
                                        \left\{\log \left(1+\frac{x}{n}\right)-\frac{x}{n+x}\right\}
                                        =
                                        \frac{\frac{1}{n}}{1+\frac{x}{n}}+\frac{-n-x+x}{(n+x)^2}
                                        =\frac{1}{n+x}-\frac{n}{(n+x)^2}
                                        =\frac{x}{n+x}>0.
                               \end{equation*} 
                                        As \eqref{dn} is zero at $x=0$,
                                        we know \eqref{dn} is positive when $x>0$.
                                        Hence the latter in \eqref{two} is larger than the former since $m>r$.
                                        
                                        Inspired by the proof of Lemma \ref{tc} in \cite{Law},
                                        we choose $\beta$ such that
                                        $$
                                    t=\frac{\beta}{     \alpha} s\sqrt{\frac{(m-1)r}{m(r-1)}}=\frac{m}{r}s,
                                        $$
                                       namely,
                                           $$
                                 \beta 
                                 \sqrt{\frac{(m-1)r}{m(r-1)}}=\frac{m}{r}{     \alpha}.
                                        $$
                                        
                                        Set $h(s)=g(t)=g(\frac{m}{r}s)$.
                                        Then $\dot h=\frac{m}{r}g'(\frac{m}{r}s)$.
                                        It follows from \eqref{Lawlorineq} for $g, r$ and $\alpha$ that
                                        $$
                                        \left(
                                        h(s)-\frac{s\dot h}{r}
                                        \right)^2
                                              +
                                                   \left(
                                                   \frac{\dot h}{m}
                                                   \right)^2
                                                   \leq 
                                                   \left(F(\alpha, t, r)\right)^2
                                                   \leq \left(F(\beta, s, m)\right)^2.
                                        $$
                                   Since $\dot h\leq 0\leq h$, we have
                                       $$
                                       \left(
                                        h(s)-\frac{s\dot h}{m}
                                        \right)^2
                                              +
                                                   \left(
                                                   \frac{\dot h}{m}
                                                   \right)^2
                                                   \leq \left(F(\beta, s, m)\right)^2.
                                        $$
        This means that $h$ solves the calibration inequality and hence, by $t=\frac{m}{r}s$, we get
                                     \begin{equation}
                                                      \tan \left(
                                                                      \th_F
                                                                      \left(m, \frac{m}{r}\sqrt{\frac{(r-1)m}{(m-1)r}}  \alpha   \right)
                                                                      \right)
                                                      <
                                                       \frac{r}{m}
                                                      \tan 
                                                              \left(
                                                              \th_F
                                                                \left(
                                                                r, \alpha
                                                                \right)
                                                              \right)  .
                         \end{equation}
                         To compare with \eqref{ratiotc} one can move the extra factor (inverse) to the right hand side and end up with Lemma \ref{tF}.
   
                                 {\ }
                           
                           \subsection{Proof of Footnote \ref{ft}.}
                           To show Footnote \ref{ft} about the monotonicity property for $\th_c(r,\cdot)$ in the second slot,
                           one just needs to observe that $(1-\alpha t)e^{\alpha t}$ in \eqref{control} is decreasing in $\alpha$.
                           So, suppose that
that
    $$
                                       \left(
                                        h(s)-\frac{s\dot h}{r}
                                        \right)^2
                                              +
                                                   \left(
                                                   \frac{\dot h}{r}
                                                   \right)^2
                                                   \leq \left((1-\beta s)e^{\beta s}\right)^2
          %T                                       
$$                          
                           holds.
                           By setting $t=s$ and $g(t)=h(s)$, when $\beta>\alpha$ one has
                             $$ \left(
                                        g(t)-\frac{t\dot g}{r}
                                        \right)^2
                                              +
                                                   \left(
                                                   \frac{g'}{r}
                                                   \right)^2
                                                    \leq \left((1-\beta s)e^{\beta s}\right)^2
                             < \left((1-\alpha t)e^{\alpha t}\right)^2.
                             $$
                             Therefore, $\th_F(r, \alpha)<\th_F(r, \beta)$.
                            
                            \begin{rem} 
                            The increasing monotonicity for $\th_F(r,\cdot)$ follows similary.
                            \end{rem}
                            
                           {\ }

                                 \subsection{Proof of Remark \ref{Frk}. }
                                 Note that  to have \eqref{tobealpha} for Type-F
                               we need to check 
                                                                $$
                                 \sqrt
                                 {
                                 \frac{(k_1+k_2+1)k_1}{(k_1+k_2)(k_1+1)}
                                 }
                                 \frac{\sqrt{k_1+k_2}+1\big/\sqrt{k_1+k_2}}{\sqrt {k_1}+1\big/\sqrt {k_1}}>1
                                 $$
                                 which is equivalent to
                                       $$
                                 \sqrt
                                 {
                                 \frac{k_1+k_2+1}{k_1+1}
                                 }
                                 \frac{1+\frac{1}{k_1+k_2}}
                                {1+\frac{1}{k_1}}>1.
                                 $$
                                 For this we check whether $\frac{(x+1)^3}{x^2}$ is increasing in $x$.
                                 By differentiating $3\log (x+1)-2\log x$,
                                 we have $\frac{3}{x+1}-\frac{2}{x}=\frac{x-2}{x(x+1)}> 0$ when $x>2$.
                                 Hence the remark gets proved
                                 $(\ast)$ when $k_1\geq 2$ or when $k_1=1$ and $k_2\geq 4$.
                                  
                                 Due to exactly the same reason \eqref{cpF3} and with extra factor added
                                 also holds.
                                  
                                  {\ }
                                               
                             %     Moreover, Remark \ref{classb} is still true for Type-F based on some simple calculation and we omit it here.

                              {\ }

\begin{bibdiv}
\begin{biblist}

%\bibitem{Allard} W. Allard,\ \ {\em On the First Variation of a Varifold},\ Ann. of Math. (1972) Vol. 95:\ 417-491.

%\bib{Allard}{article}{
   % author={Allard, William K.}
   % title={On the first variation of a varifold},
%    journal={Ann. Math. },
%    volume={95},
%%    date={1972},
%    pages={417--491},
%}

%\bibitem{A} F. J. Almgren: Some interior regularity theorems for minimal surfaces and an extension of Bernstein's theorem. Ann. Math. {\bf 84} (1966), 277--292.
\bib{A}{article}{
    author={{Almgren, Jr.}, Frederick  J.}
    title={Some interior regularity theorems for minimal surfaces and an extension of Bernstein's theorem},
    journal={Ann. Math.},
    volume={84},
    date={1966},
    pages={277--292},
}

%\bibitem{B} S. Bernstein: Sur un theoreme de geometrie et ses applications aux equations aux derivees
%partielle du type elliptique. Comm. de la Soc. Math de Kharkov {\bf 15} (1915-17), 38--45.

\bib{B}{article}{
    author={Bernstein}
    title={Sur un théoréme de géometrie et ses applications aux équations aux dérivées partielle du type elliptique},
    journal={Comm. de la Soc. Math de Kharkov},
    volume={15},
    date={1915},
    pages={38--45},
}

%bibitem{b-d-g} E. Bombieri, E. De Giorgi and E. Giusti: Minimal cones and the Bernstein problem. Invent. Math. 7 (1969), 243--268.

\bib{BdGG}{article}{
    author={Bombieri, Enrico},
     author={De Giorgi, Ennio},
      author={Giusti, E.},
    title={Minimal cones and the Bernstein problem},
    journal={Invent. Math.},
    volume={7},
    date={1969},
    pages={243--268},
}

\bib{CM}{article}{
    author={Carberry, Emma},
    author={McIntosh, Ian},
    title={Minimal Lagrangian 2-tori in $\mathbb CP^2$ come in real families of every dimension},
    journal={J. Lond. Math. Soc.},
    volume={69},
    date={2004},
    pages={531--544},
}

%\bibitem{Ch}  B. Cheng,\ \ {\em Area-minimizing Cone-type Surfaces and Coflat Calibrations},\ Indiana Univ. Math. J. (1988) Vol. 37:\ 505-535.

\bib{Ch}{article}{
    author={Cheng, B.N.},
    title={Area-minimizing cone-type surfaces and coflat calibrations},
    journal={Indiana Univ. Math. J.},
    volume={37},
    date={1988},
    pages={505--535},
}

 \bib{CH}{article}{
    author={Choe, Jaigyoung},
    author={Hoppe, Jens},
    title={Some minimal submanifolds generalizing the Clifford torus},
    journal={Math. Nach.},
    volume={291},
    date={2018},
    pages={2536--2542},
}

% \bibitem{de2} E. de Giorgi: Una estensione del teorema di Bernstein. Ann. Sc. Norm. Sup. Pisa {\bf 19}
%(1965), 79--85. 

\bib{de2}{article}{
    author={De Giorgi, Ennio},
    title={Una estensione del teorema di Bernstein},
    journal={Ann. Sc. Norm. Sup. Pisa},
    volume={19},
    date={1965},
    pages={79--85},
}

%\bibitem{FF} H. Federer and W. Fleming,\ \ {\em Normal and integral currents}, Ann. Math. (1960) Vol. 72:\  458-520.

\bib{FF}{article}{
    author={Federer, Herbert},
    author={Fleming, Wendell H.},
    title={Normal and integral currents},
    journal={Ann. Math. },
    volume={72},
    date={1960},
    pages={458--520},
}

%\bibitem{F} H. Federer,\ \ {Geometric Measure Theory},\ {Springer-Verlag, New York}, 1969.

\bib{F}{book}{
    author={Federer, Herbert},
    title={Geometric Measure Theory},
    place={Springer-Verlag, New York},
    date={1969},
}

    \bib{FK}{article}{
        author={Ferus, Dirk},
author={Karcher, Hermann},
   title= {Non-rotational minimal spheres and minimizing cones},
journal={Comment. Math. Helv.},
    volume={\bf 60},
    date={1985},
   pages={247--269},
   }
   
%   \bibitem{fle} 
%W.H. Fleming: On the oriented Plateau problem. Rend. Circolo Mat. Palermo {\bf 9} (1962),
%69--89. 

    \bib{fle}{article}{
author={Fleming, Wendell H.},
   title= {On the oriented Plateau problem},
journal={Rend. Circolo Mat. Palermo},
    volume={\bf 9},
    date={1962},
   pages={69--89},
   }

%\bibitem{F2} H. Federer,\ \ {\em Real flat chains, cochains and variational problems}, Indiana Univ. Math. J. (1974) Vol. 24:\ 351-407.

%\bib{F2}{article}{
   % author={Federer, Herbert},
%    title={Real flat chains, cochains and variational problems},
   % journal={Indiana Univ. Math. J. },
%    volume={24},
   % date={1974},
   % pages={351--407},
%}

%\bibitem{HS} R. Hardt and L. Simon, {\em Area minimizing hypersurfaces with isolated singularities}, J. Reine. Angew. Math. (1985) Vol. 362:\ 102-129.

\bib{HS}{article}{
    author={Hardt, Robert},
    author={Simon, Leon},
    title={Area minimizing hypersurfaces with isolated singularities},
    journal={J. Reine. Angew. Math.},
    volume={362},
    date={1985},
    pages={102--129},
}

%\bibitem{HL2} R. Harvey and H. B. Lawson, Jr.,\ \ {\em Calibrated geometries},\ Acta Math. (1982) Vol. 148:\ 47-157.
\bib{HL}{article}{
    author={Harvey, F. Reese},
    author={{Lawson, Jr.}, H. Blaine},
    title={Calibrated geometries},
    journal={Acta Math.},
    volume={148},
    date={1982},
    pages={47--157},
}

%\bibitem{HL1} R. Harvey and H. B. Lawson, Jr.,\ \ {\em Calibrated Foliations},\  Amer. J. Math. (1982) Vol. 104:\ 607-633.

%\bib{HL1}{article}{
%    author={Harvey, F.R.},
%    author={{Lawson, Jr.}, H.B.},
%    title={Calibrated foliations},
%    journal={Amer. J. Math.},
%    volume={104},
%    date={1982},
%    pages={607--633},
%}

\bib{JCX}{article}{
    author={Jiao, Xiaoxiang},
    author={Cui, Hongbin},
    author={Xin, Jialin},
    title={Area-minimizing cones over products of Grassmannian manifolds},
    journal={Calc. Var. PDE},
    volume={61},
    date={2022},
    pages={205},
}

\bib{Law0}{article}{
    author={Lawlor, Gary R.},
    title={The angle criterion},
   journal={Invent. Math.},
   volume={95},
   date={1989},
       pages={437--446},
}

%\bibitem{Law} G. Lawlor,\ \ A Sufficient Criterion for a Cone to Be Area-Minimizing,\ {\em Mem. of the Amer. Math. Soc.}, Vol. 91, 1991.

\bib{Law}{book}{
    author={Lawlor, Gary R.},
    title={A Sufficient Criterion for a Cone to be Area-Minimizing},
   place={Mem. of the Amer. Math. Soc.},
   volume={91},
   date={1991},
}

%\bibitem{BL} H. B. Lawson, Jr.,\ \ {\em The Equivariant Plateau Problem and Interior Regularity},\ Trans. Amer. Math. Soc. (1972) Vol. 173:\ 231-249.

\bib{BL}{article}{
    author={{Lawson, Jr.}, H. Blaine},
    title={The equivariant Plateau problem and interior regularity},
    journal={Trans. Amer. Math. Soc.},
    volume={173},
    date={1972},
    pages={231-249},
}

\bib{L-Z}{article}{
    author={Li, Haizhong},
     author={Zhang, Yongsheng},
    title={Spiral Minimal Products}
    journal={arXiv: 2306.03328},
}

%Josai Mathematical Monographs Vol. 13 (2021) pp. 69–91
%Area-minimizing cones over minimal embeddings of R-spaces
%Shinji Ohno and Takashi Sakai

\bib{OS}{article}{
    author={Ohno, Shinji},
    author={Sakai, Takashi},
    title={Area-minimizing cones over minimal embeddings of R-spaces},
    journal={Josai Math. Monogr.},
    volume={13},
    date={2021},
    pages={69--91},
}

%\bibitem{FM2} F. Morgan,\ \ {\em On Finiteness of the Number of Stable Minimal Hypersurfaces with a Fixed Boundary},\ {Indiana Univ. Math. J.} (1986) Vol. 35:\ 779-833.

%\bib{FM2}{article}{ author={Morgan, Frank},title={On finiteness of the number of stable minimal hypersurfaces with a fixed boundary},
%    journal={Indiana Univ. Math. J.},
%    volume={35},
  %  date={1986},
   % pages={779-833},
%}

\bib{JS}{article}{
    author={Simons, James},
    title={Minimal varieties in riemannian manifolds},
    journal={Ann. of Math.},
    volume={88},
    date={1968},
    pages={62--105},
}
%%%%%%%%%%%%%%%%%%

%\bibitem{NS} N. Smale,\ \ {\em Singular homologically area minimizing surfaces of codimension one in Riemannian manifolds}, Invent. Math. (1999) Vol. 135:\ 145-183.

\bib{NS}{article}{
    author={Smale, Nathan},
    title={Singular homologically area minimizing surfaces of codimension one in Riemannian manifolds},
    journal={Invent. Math.},
    volume={135},
    date={1999},
    pages={145-183},
}

\bib{TZ}{article}{
    author={Tang, Zizhou},
    author={Zhang, Yongsheng},
    title={Minimizing cones associated with isoparametric foliations},
    journal={J. Diff. Geom.},
    volume={115},
    date={2020},
    pages={367--393},
} 

%\bibitem{Z2} Y. Zhang,\ \ {\em On Lawson's Area-minimizing Hypercones}.On Lawson?s area-minimizing hypercones, Acta Math. Sin. (Engl. Ser.) 32 (2016) 1465?1476. 

\bib{X}{book}{
    author={Xin, Yuanlong},
    title={Minimal submanifolds and related topics},
    place={Nankai Tracts in Mathematics, World Scientific Publishing},
   date={2003 (and Second Edition in 2018)},

}

\bib{XYZ2}{article}{
author={Xu, Xiaowei}
author={Yang, Ling}
   author={Zhang, Yongsheng},
   title={New area-minimizing Lawson-Osserman cones},
    journal={Adv. Math.},
   Volume={330},
   date={2018},
    pages={739--762},
   }

%\bib{x-y-z0}{article}{
%    author={Xu, Xiaowei},
 %   author={Yang, Ling},
  %      author={Zhang, Yongsheng},
  %  title={Dirichlet boundary values on Euclidean balls with infinitely many solutions for the minimal surface system},
%    journal={J. Math. Pur. Appl.},
 %   volume={129},
%    date={2019},
 %   pages={266--300},
%}

%Y. S. Zhang: On realization of tangent cones of homologically area-minimizing compact singular submanifolds. J. Diff. Geom. {\bf 109}(2018), 177-188.

%\bib{Z2}{article}{
%   author={Zhang, Yongsheng},
%   title={On Lawson's area-minimizing hypercones}
%   journal={Acta Math. Sin. (Engl. Ser.)},
%    volume={32},
%    date={2016},
  %  pages={1465--1476},
%
%   }

\bib{Z12}{article}{
   author={Zhang, Yongsheng},
   title={On extending calibration pairs}
   journal={Adv. Math.},
    volume={308},
    date={2017},
    pages={645--670},
   }

\bib{z}{article}{
        author={Zhang, Yongsheng},
    title={On realization of tangent cones of homologically area-minimizing compact singular submanifolds},
    journal={J. Diff. Geom.},
    volume={109},
    date={2018},
    pages={177--188},
    }

\bib{z0}{article}{
        author={Zhang, Yongsheng},
    title={Detect duality obstruction of calibrations in smooth category},
    	journal={arXiv:2512.04789}
    }

%\bibitem{Z12} Y. Zhang,\ \ {\em On extending calibrations}.)
%Y. S. Zhang: On extending calibrations. Adv. Math. {\bf 308} (2017), 645--670.

\end{biblist}
\end{bibdiv}

{\ }

\end{document}